\newlength{\fixboxwidth}
\newcommand{\re}{\mathbb{R}}\newcommand{\N}{\mathbb{N}}
\newcommand{\zz}{\mathbb{Z}}\newcommand{\C}{\mathbb{C}}
\newcommand{\Z}{{\zz}^d}
\newcommand{\R}{{\re}^d}
\newcommand{\cs}{{\mathcal S}}
\newcommand{\cf}{{\mathcal F}}
\newcommand{\cfi}{{\cf}^{-1}}
\newcommand{\trace}{{\rm tr \, }}
\newcommand{\supp}{{\rm supp \, }}
\newcommand{\mix}{{\rm mix}}
\newcommand{\gf}{\mathcal{F}}
\newcommand{\unif}{{\rm unif}}
\newcommand{\bproof}{\begin{proof}}
\newcommand{\eproof}{\end{proof}}
\newcommand{\be}{\begin{equation}}
\newcommand{\ee}{\end{equation}}
\newcommand{\beq}{\begin{eqnarray}}
\newcommand{\beqq}{\begin{eqnarray*}}
\newcommand{\eeq}{\end{eqnarray}}
\newcommand{\eeqq}{\end{eqnarray*}}
\numberwithin{equation}{section}
\newtheorem{theorem}{Theorem}[section]
\newtheorem{definition}[theorem]{Definition}
\newtheorem{lemma}[theorem]{Lemma}
\newtheorem{proposition}[theorem]{Proposition}
\newtheorem{remark}[theorem]{Remark}
\begin{document}

\title{Pointwise Multipliers for Sobolev and Besov Spaces of Dominating Mixed Smoothness}

\author[a,b]{Van Kien Nguyen\thanks{E-mail: kien.nguyen@uni-jena.de,\ kiennv@utc.edu.vn}}
\author[a]{Winfried Sickel\thanks{E-mail: winfried.sickel@uni-jena.de}}
%email{kien.nguyen@uni-jena.de \quad \& \quad  winfried.sickel@uni-jena.de}
\affil[a]{Friedrich-Schiller-University Jena, Ernst-Abbe-Platz 2, 07737 Jena, Germany}
\affil[b]{University of Transport and Communications, Dong Da, Hanoi, Vietnam}

%%%%%%%%%%%%%%%%%%%%%%%%%%%%%%%%%%%%%%%%%%%%%%%%%%%%%%%%%%%%%%%%%%%%%%%%%%%%%%%%%%%%%
%%%%%%%%%%%%%%%%%%%%%%%%%%%%%%%%%%%%%%%%%%%%%%%%%%%%%%%%%%%%%%%%%%%%%%%%%%%%%%%%%%%%%

\date{\today}

\maketitle
\begin{abstract}
Under certain restrictions we describe the set of all pointwise multipliers in case of Sobolev  and Besov spaces of 
dominating mixed smoothness.
In addition we shall give necessary and sufficient conditions for the case that these spaces form algebras with 
respect to pointwise multiplication.
\end{abstract}

%&&&&&&&&&&&&&&&&&&&&&&&&&&&&&&&&&&&&&&&&&&&&&&&&&&&&&&&&&&&&&&&&&&&&&&&&&&&&&&&&&&&&&&&&&&&&&&&&&&&&
%&&&&&&&&&&&&&&&&&&&&&&&&&&&&&&&&&&&&&&&&&&&&&&&&&&&&&&&&&&&&&&&&&&&&&&&&&&&&&&&&&&&&&&&&&&&&&&&&&&&&

\section{Introduction}

%&&&&&&&&&&&&&&&&&&&&&&&&&&&&&&&&&&&&&&&&&&&&&&&&&&&&&&&&&&&&&&&&&&&&&&&&&&&&&&&&&&&&&&&&&&&&&&&&&&&&
%&&&&&&&&&&&&&&&&&&&&&&&&&&&&&&&&&&&&&&&&&&&&&&&&&&&&&&&&&&&&&&&&&&&&&&&&&&&&&&&&&&&&&&&&&&&&&&&&&&&&

The regularity concepts related to  Sobolev and Besov spaces of dominating mixed smoothness 
are standard in {\em Approximation Theory} \cite{T93b}, {\em Numerical Analysis} \cite{BG}, \cite{SST08}  and 
{\em Information-Based Complexity} 
\cite{NoWo08}, \cite{NoWo10}, \cite{NoWo12}.
However, there is also some interest in {\em Learning Theory} in those classes, at least in $S^m_2 W(\R)$, $m \in \N$, and 
$S^r_{2,2}B(\R)$, $r>0$, see \cite{sc}, \cite{lki}. Recently we have been asked by Lev Markhasin and Ingo Steinwarth 
about pointwise multipliers for those classes. By dealing with this problem it turned out that these problems become more difficult  
compared to the isotropic situation.
It will be the aim of this paper to describe this in detail.
\\

As it is well-known, Sobolev spaces $W^m_{p}(\R)$ form an algebra with respect to pointwise multiplication if $m>d/p$.
This means there exists a constant $c_1$ such that
\be\label{ws-00}
\| \, f\cdot g\, |W^m_{p}(\R)\| \le c_1 \,  \| \, f\, |W^m_{p}(\R)\|\,  \| \,  g\, |W^m_{p}(\R)\|
\ee
holds for all $f,g \in W^m_p (\R)$.
In this paper we shall deal with a generalization of this fact to tensor product Sobolev spaces $S^m_p W(\R)$ where
\beqq
S^m_p W (\R)  & := &   \underbrace{W^m_{p}(\re) \otimes_{\alpha_p} W^m_{p}(\re) \otimes_{\alpha_p}   
\ldots   \otimes_{\alpha_p}\,  W^m_{p}(\re)} \, .
\\
& & \qquad \qquad\qquad \quad\mbox{d times}
\eeqq
Here $1 < p< \infty$ and $\alpha_p $ denotes the $p$-nuclear norm. 
For a moment we concentrate on the two-dimensional case.
Using the cross-norm property of $S^m_p W(\re^2) $ and \eqref{ws-00} we conclude for tensor products
$f= f_1 \otimes f_2 $ and $g= g_1 \otimes g_2$ with $f_1,f_2, g_1, g_2  \in W^m_p (\re)$ that  
\beq\label{ws-01}
\| \, f\cdot g\, | S^m_{p}W(\re^2)\| & = &  \| \, (f_1 \cdot g_1) \otimes (f_2 \cdot g_2)\,  |S^m_p W (\re^2) \|
= \| \, f_1 \cdot g_1\, | W^m_p (\re) \|\cdot \| \,  f_2 \, \cdot \,  g_2\,  |W^m_p (\re) \|
\nonumber
\\
&\le & c^2_1 \,  \| \, f_1\, |W^m_{p}(\re)\|\,  \| \,  g_1\, |W^m_{p}(\re)\|\, 
\, \| \, f_2\, |W^m_{p}(\re)\|\,  \| \,  g_2\, |W^m_{p}(\re)\|
\nonumber
\\
& =  & c^2_1 \,  \| \, f_1 \otimes f_2 \, |S^m_{p}W(\re^2)\|\,  \| \,  g_1 \otimes g_2\, |S^m_{p}W(\re^2)\|\, 
\nonumber
\\
& =  & c^2_1 \,  \| \, f \, |S^m_{p}W(\re^2)\|\,  \| \,  g\, |S^m_{p}W(\re^2)\|\, . 
\eeq
Here $c_1$ is the same constant as in \eqref{ws-00} for $d=1$.
Since $\|\, \cdot \, |S^m_p W (\re^2)\|$ is an uniform cross-norm it follows that in this particular situation where $f$ is given by a tensor product
the linear operator $T_f: ~ g \mapsto f \, \cdot \, g$ maps $S^m_p W(\re^2)$ into itself, see, e.g.,  \cite[Lemma~1.30]{LiCh}.
Hence, any operator $T_f$, where  
\be\label{ws-02q}
f= \sum_{k=1}^N f_{k,1} \otimes f_{k,2}\, , \qquad 
f_{k,1},\, f_{k,2} \in W^m_p (\re) \, , 
\ee
maps  $S^m_p W(\re^2)$ into itself.
The set of functions defined in \eqref{ws-02q} forms a dense set in $S^m_p W(\re^2)$. However, the present situation does not allow 
to conclude that all functions $f\in S^m_p W (\re^2)$ 
generate an operator $T_f$ which maps $S^m_p W(\re^2)$ into itself. As a consequence of 
\eqref{ws-01} we only get 
\[
 \| \, f\cdot g\, | S^m_{p}W(\re^2)\| \le  c^2_1 \, \Big(\sum_{k=1}^N  \,  \| \, f_{k,1} \otimes f_{k,2} \, |S^m_{p}W(\re^2)\|\Big)\,  
\| \,  g\, |S^m_{p}W(\re^2)\|\, 
\]
In what follows we will improve this estimate to 
\[
 \| \, f\cdot g\, | S^m_{p}W(\R)\| \le  c_2 \,  \| \, f \, |S^m_{p}W(\R)\|\,  
\| \,  g\, |S^m_{p}W(\R)\|\, \, , \qquad f,g \in S^m_p W (\R)\, , 
\]
mainly based on improved trace inequalities.
It is well-known that the mapping $\trace : ~f(x_1, x_2)\mapsto f(x_1,0) $ maps $S^m_p W (\re^2)$ continuously onto 
$W^m_p (\re)$. Let $d= d_1 + d_2$, $d_1,d_2 \in \N$,  and $\alpha = (\alpha_1, \ldots \alpha_{d_1}, 0, \ldots \, , 0) \in \N_0^d$, 
$\max_\ell |\alpha_\ell| \le m$.
We shall show below that in the general $d$-dimensional context we always have an inequality of the form
\[
\| \, \sup_{y \in \re^{d_2}} |D^\alpha f(x,y)|\, |L_p (\re^{d_1})\| \le c_3\, 
\|\, f \, |S^m_p W (\R)\|
\]
with $c_3$ independent of $f$.\\

In a similar way we shall proceed in case of  Besov spaces. Let $r$ be a positive real number  and $1\le p\le \infty$.
We define
\beqq
S^r_{p,p} B (\R)  & := &   \underbrace{B^r_{p,p}(\re) \otimes_{\alpha_p} B^r_{p,p}(\re) \otimes_{\alpha_p}   
\ldots   \otimes_{\alpha_p}\,  B^r_{p,p}(\re)} \, .
\\
& & \qquad \qquad\qquad \quad\mbox{d times}
\eeqq
For those tensor product Besov spaces, in case $r>1/p$, we shall show the inequality  
\be\label{ws-013}
 \| \, f\cdot g\, | S^r_{p,p} B(\R)\| \le  c_4 \,  \| \, f \, |S^r_{p,p}B(\R)\|\,  
\| \,  g\, |S^r_{p,p}B(\R)\|\, \, , \qquad f,g \in S^r_{p,p} B (\R)\, , 
\ee 
In both cases, the Sobolev spaces of dominating mixed smoothness $S_p^m W(\R)$ and 
the Besov spaces of dominating mixed smoothness $S^r_{p,p} B(\R)$, we also able to describe the set of all pointwise multipliers
$M(S_p^m W(\R))$ and $M(S_{p,p}^r B(\R))$, respectively. Our  proof of \eqref{ws-013} relies on the characterization of $S^r_{p,p} B(\R)$ by differences as in the 
classical paper \cite{Str-67} of Strichartz or in the monographs \cite{MS1}, \cite{MS2} by Maz'ya and Shaposnikova.
It seems that the method of using paraproducts, already applied in Peetre \cite{Pe}, Triebel \cite{Tr78}, \cite{Tr83} or Runst, S. \cite{RS}, is less convenient in 
the context of dominating mixed smoothness. 
\\

Probably less well-known is the fact that the intersections $W^m_{p}(\R)\cap L_\infty (\R)$ and
$B^r_{p,p}(\R)\cap L_\infty (\R)$
form algebras with respect to pointwise multiplication. Here   $m \in \N_0$ and $r>0$.
We refer to Moser \cite{Mo}, Zeidler \cite[Prop.~21.77]{Ze} for the Sobolev case with $p=2$,  Peetre \cite[Thm.~11, p.~147]{Pe}
for the Besov case and to \cite[Thm.~4.6.4/2]{RS} for the general situation.
In addition the following Moser-type inequalities hold
\[
\| \, f\cdot g\, |W^m_{p}(\R)\| \le c_5 \, \big( \| \, f\, |W^m_{p}(\R)\|\, \| \,  g\, |L_{\infty}(\R)\| + \| \,  g\, |W^m_{p}(\R)\|\, \| \, f\, |L_{\infty}(\R)\|\big)
\]
for all $f,g \in W^m_p (\R) \cap L_\infty (\R)$ and
\[
\| \, f\cdot g\, |B^r_{p,p}(\R)\| \le c_6 \, \big( \| \, f\, |B^r_{p,p}(\R)\|\, \| \,  g\, |L_{\infty}(\R)\| + \| \,  g\, |B^r_{p,p}(\R)\|\, \| \, f\, |L_{\infty}(\R)\|\big)
\]
for all $f,g \in B^r_{p,p} (\R) \cap L_\infty (\R)$, respectively.

To our own surprise these inequalities do not have a counterpart in the tensor product situation.
\\

The paper is organized as follows.
In the next Section \ref{def} we shall collect all what is needed
about these tensor product  function spaces. Mainly we shall work with Fourier analytic descriptions and characterizations by differences  of these classes.
In Section \ref{main} we shall state and comment on our main results.
All proofs are concentrated in Section \ref{proofs}.

%&&&&&&&&&&&&&&&&&&&&&&&&&&&&&&&&&&&&&&&&&&&&&&&&&&
%&&&&&&&&&&&&&&&&&&&&&&&&&&&&&&&&&&&&&&&&&&&&&&&&&&&

\subsection*{Notation}

%&&&&&&&&&&&&&&&&&&&&&&&&&&&&&&&&&&&&&&&&&&&&&&&&&&
%&&&&&&&&&&&&&&&&&&&&&&&&&&&&&&&&&&&&&&&&&&&&&&&&&&&

As usual $\N$ denotes the natural numbers, $\N_0=\N\cup\{0\}$, 
%$\N_{-1}=\N_0\cup\{-1\}$, 
$\zz$ denotes the integers, 
$\re$ the real numbers, 
and $\C$ the complex numbers. The letter $d\in \N, \ d>1,$ is always reserved for the underlying dimension in $\R, \Z$ etc and by $[d]$ we mean
$[d]=\{1,...,d\}$. If $k=(k_1, ... , k_d)\in \N_0^d$, then we put
\beqq
|k|_1 := k_1 + \ldots \, + k_d\, \qquad\text{and}\qquad |k|_\infty:= \max_{j=1, \ldots ,\, d} \, k_j \,.
\eeqq
We denote
with $\langle x,y\rangle$ or $x\cdot y$ the usual Euclidean inner product in $\R$. By $x\diamond y$ we mean
\beqq
x\diamond y =(x_1y_1,...\,, x_dy_d)\in \R\,.
\eeqq
For a subset $e$ of $\{1,2,\ldots \, , d\}$ we put
\[
\N_0^d(e) := \big\{k\in \N_0^d: \ k_i=0\ \text{if}\ i\not \in e\big\}.
\]
If $X$ and $Y$ are two (quasi-)normed spaces, the (quasi-)norm
of an element $x$ in $X$ will be denoted by $\|x\,|\,X\|$. 
The symbol $X \hookrightarrow Y$ indicates that the
identity operator is continuous. For two sequences $a_n$ and $b_n$ we will write $a_n \lesssim b_n$ if there exists a
constant $c>0$ such that $a_n \leq c\,b_n$ for all $n$. We will write $a_n \asymp b_n$ if $a_n \lesssim b_n$ and $b_n
\lesssim a_n$. 

Let $\cs(\R)$ be the Schwartz space of all complex-valued rapidly decreasing infinitely differentiable  functions on $\R$. 
The topological dual, the class of tempered distributions, is denoted by $\cs'(\R)$ (equipped with the weak topology).
The Fourier transform on $\cs(\R)$ is given by 
\[
\cf \varphi (\xi) = (2\pi)^{-d/2} \int_{\R} \, e^{-ix \xi}\, \varphi (x)\, dx \, , \qquad \xi \in \R\, .
\]
The inverse transformation is denoted by $\cfi $.
We use both notations also for the transformations defined on $\cs'(\R)$\,.
\\

%&&&&&&&&&&&&&&&&&&&&&&&&&&&&&&&&&&&&&&&&&&&&&&&&&&&&&&&&&&&&&&&&&&&&&&&&&&&&&&&&&&&&&&&&&&&&&&
%&&&&&&&&&&&&&&&&&&&&&&&&&&&&&&&&&&&&&&&&&&&&&&&&&&&&&&&&&&&&&&&&&&&&&&&&&&&&&&&&&&&&&&&&&&&&&&

\section{Sobolev and Besov spaces of dominating mixed smoothness}\label{def}

%&&&&&&&&&&&&&&&&&&&&&&&&&&&&&&&&&&&&&&&&&&&&&&&&&&&&&&&&&&&&&&&&&&&&&&&&&&&&&&&&&&&&&&&&&&&&&&
%&&&&&&&&&&&&&&&&&&&&&&&&&&&&&&&&&&&&&&&&&&&&&&&&&&&&&&&&&&&&&&&&&&&&&&&&&&&&&&&&&&&&&&&&&&&&&&

For our methods the tensor product approach to these function spaces is not appropriate.
We shall introduce them by derivatives and differences.

%&&&&&&&&&&&&&&&&&&&&&&&&&&&&&&&&&&&&&&&&&&&&&&&&&&&&&&&&&&&&&&&&&&&&&&&&&&&&&&&&&&&&&&&&&&&&&&
%&&&&&&&&&&&&&&&&&&&&&&&&&&&&&&&&&&&&&&&&&&&&&&&&&&&&&&&&&&&&&&&&&&&&&&&&&&&&&&&&&&&&&&&&&&&&&&

\subsection{Sobolev spaces of dominating mixed smoothness}

%&&&&&&&&&&&&&&&&&&&&&&&&&&&&&&&&&&&&&&&&&&&&&&&&&&&&&&&&&&&&&&&&&&&&&&&&&&&&&&&&&&&&&&&&&&&&&&
%&&&&&&&&&&&&&&&&&&&&&&&&&&&&&&&&&&&&&&&&&&&&&&&&&&&&&&&&&&&&&&&&&&&&&&&&&&&&&&&&&&&&&&&&&&&&&&

The interpretation of  Sobolev spaces of dominating mixed smoothness as tensor product spaces is taken from 
\cite{SU09,SU10}.
We refer also to these papers for a definition of $X \otimes_{\alpha_p} Y$. However, here we shall work with the following.

\begin{definition}\label{def-so} Let $1< p< \infty$ and $m\in \N$. Then the  Sobolev space of dominating mixed 
smoothness $S^m_p W(\R)$ is the collection of all functions 
$f \in L_p (\R)$ such that all distributional derivatives $D^\alpha f$ with $\max_{j=1, \ldots \, d} \, \alpha_j\le m$
belong to $L_p (\R)$. We put
\beqq
\|\, f\, |S^m_pW(\R)\|^{(d)} := \sum_{|\alpha|_{\infty} \leq m} \, \|\, D^{\alpha}f\, |L_p(\R)\|\,.
\eeqq 
\end{definition}

\begin{remark} \rm
  Let $1<p<\infty$ and $m\in \N_0$. By $W^m_p(\R)$ we denote the isotropic classical Sobolev spaces equipped with the norm
\beqq
\|f|W^m_p(\R)\|:= \sum_{|\alpha|_{1} \leq  m}\|D^{\alpha}f|L_p(\R)\|\, .
\eeqq 
$\|\, \cdot\, |S^m_pW(\R)\|$ is a cross-norm,  i.e., 
if $f_i \in W^m_p(\re),\ i=1, \ldots, d,$ then 
\[
 f(x) = \prod_{i=1}^d f_i (x_i)  \in S^m_pW(\R) \qquad \text{and} \qquad
 \| \, f \, | S^m_pW(\R)\| = \prod_{i=1}^d \|\, f_i \, |W^m_p(\re)\| \, .
\]
In case $d=1$ we have $S^m_p W (\re)= W^m_p (\re)$.
\end{remark}

As in case of isotropic Sobolev spaces it will be enough to concentrate on the $L_p$-norms 
of the function itself and of those derivatives with the highest order, i.e., those derivatives  $D^\alpha$, where 
$ \alpha\in \{0,m\}^d$, see \cite{ST} (combine Def. 2.3.1 and Theorem 2.3.1) or \cite[Cor.~2.1.1]{Hansen}.

\begin{lemma}\label{equi} Let $1< p< \infty$ and $m\in \N_0$. Then $S^m_p W(\R)$ is the collection of $f\in L_p(\R)$ such that
\beqq
\|f|S^m_pW(\R)\|^*:= \sum_{ \alpha\in \{0,m\}^d}\|D^{\alpha}f|L_p(\R)\|<\infty \, .
\eeqq
$\|\, \cdot \, |S^m_pW(\R)\|^*$  and $\|\, \cdot \, |S^m_pW(\R)\|^{(d)}$ are equivalent.
\end{lemma}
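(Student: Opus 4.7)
The easy inclusion $\|\, f\, |S^m_pW(\R)\|^* \le \|\, f\, |S^m_pW(\R)\|^{(d)}$ is immediate, since every $\alpha \in \{0,m\}^d$ satisfies $|\alpha|_\infty \le m$, so the sum defining $\|\cdot\|^*$ is just a partial sum of the one defining $\|\cdot\|^{(d)}$. The substance of the lemma is the reverse estimate $\|\, f\, |S^m_pW(\R)\|^{(d)} \ls \|\, f\, |S^m_pW(\R)\|^*$, and I would obtain it by induction on
\[
J(\alpha) := \#\{j \in [d] \colon 0 < \alpha_j < m\},
\]
establishing for every multi-index $\alpha \in \N_0^d$ with $|\alpha|_\infty \le m$ that the distribution $D^\alpha f$ lies in $L_p(\R)$ and satisfies $\|\, D^\alpha f\, |L_p(\R)\| \ls \|\, f\, |S^m_pW(\R)\|^*$. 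Summing over the $(m+1)^d$ admissible $\alpha$ then delivers the full inequality.

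The base case $J(\alpha)=0$ corresponds exactly to $\alpha \in \{0,m\}^d$ and is built into the definition of $\|\cdot\|^*$. For the inductive step, assume $J(\alpha) \ge 1$ and pick $\ell \in [d]$ with $0 < \alpha_\ell < m$. Let $\alpha^{(0)}$, respectively $\alpha^{(m)}$, denote the multi-index obtained from $\alpha$ by replacing the $\ell$-th entry by $0$, respectively $m$; then $J(\alpha^{(0)}) = J(\alpha^{(m)}) = J(\alpha) - 1$, so the inductive hypothesis applies to both. For almost every tuple $x' = (x_1,\ldots,\widehat{x_\ell},\ldots,x_d)$ of the remaining variables, the one-variable function $t \mapsto g_{x'}(t) := (D^{\alpha^{(0)}} f)(x_1,\ldots,x_{\ell-1},t,x_{\ell+1},\ldots,x_d)$ lies in $L_p(\re)$, its $\alpha_\ell$-th distributional derivative in $t$ coincides with the $x'$-slice of $D^\alpha f$, and its $m$-th derivative coincides with the $x'$-slice of $D^{\alpha^{(m)}} f$.

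At this point I would invoke the classical one-variable interpolation inequality
\[
\|\, g^{(k)}\, |L_p(\re)\| \ls \|\, g\, |L_p(\re)\| + \|\, g^{(m)}\, |L_p(\re)\|, \qquad 0 \le k \le m, \; 1<p<\infty,
\]
valid whenever $g,g^{(m)} \in L_p(\re)$. Applied with $k = \alpha_\ell$ to $g_{x'}$, then raised to the $p$-th power, integrated in $x'$ and combined via Fubini, it yields
\[
\|\, D^\alpha f\, |L_p(\R)\| \ls \|\, D^{\alpha^{(0)}} f\, |L_p(\R)\| + \|\, D^{\alpha^{(m)}} f\, |L_p(\R)\|,
\]
and the right-hand side is $\ls \|\, f\, |S^m_pW(\R)\|^*$ by the inductive hypothesis, closing the induction.

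The only genuinely non-trivial ingredient is the displayed one-variable estimate; it is a standard Sobolev embedding that follows, for instance, from the $L_p(\re)$-boundedness of the Fourier multiplier $(i\xi)^k (1+\xi^2)^{-m/2}$ via Mikhlin's theorem, and this is precisely the point at which the restriction $1<p<\infty$ is used. No multi-dimensional multiplier theorem is invoked; everything else is Fubini, one-variable distributional calculus, and bookkeeping on multi-indices.
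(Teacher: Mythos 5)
Your proposal is correct, but it is not the route the paper takes: the paper does not prove Lemma \ref{equi} at all, referring instead to \cite{ST} (Def.~2.3.1 combined with Thm.~2.3.1) and \cite[Cor.~2.1.1]{Hansen}, where the equivalence is obtained Fourier-analytically, essentially by combining the Littlewood--Paley/lift characterization of $S^m_pW(\R)$ (Proposition \ref{def-do}(i)) with a tensorized Michlin--H\"ormander multiplier theorem in $d$ variables. Your argument replaces that machinery by an induction on the number of ``intermediate'' components of $\alpha$, reducing everything to the one-dimensional estimate $\|g^{(k)}|L_p(\re)\| \ls \|g|L_p(\re)\| + \|g^{(m)}|L_p(\re)\|$ applied slice-wise and recombined by Fubini. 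This is genuinely more elementary and self-contained (only a scalar one-variable multiplier, or even the Kolmogorov--Landau inequality, is needed), and it makes transparent exactly where $1<p<\infty$ enters; the price is that it is specific to the integer-smoothness $W$-scale, whereas the Fourier-analytic route generalizes to fractional and vector-valued settings. One point you should tighten: at the stage where you speak of ``the $x'$-slice of $D^\alpha f$'', the distribution $D^\alpha f$ is not yet known to be a function, so the correct formulation is that the a.e.-defined function $x \mapsto g_{x'}^{(\alpha_\ell)}(x_\ell)$ lies in $L_p(\R)$ by the integrated one-dimensional inequality and \emph{represents} $D^\alpha f = \partial_\ell^{\alpha_\ell}\bigl(D^{\alpha^{(0)}}f\bigr)$; this is verified by testing against tensor-product test functions and using Fubini, together with the standard fact that weak derivatives of $L_p$ functions slice correctly for a.e.\ $x'$ (mollify and pass to an a.e.-convergent subsequence of slices). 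With that routine justification supplied, the induction closes as you describe.
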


Of some importance will be embeddings into the class $C(\R)$ of all bounded and continuous functions 
equipped with the supremum norm.

\begin{lemma}\label{emb}
Let $m \in \N$ and $1 < p < \infty$. Then
the space $S^m_p W(\R)$ is continuously embedded into $C (\R)$.
\end{lemma}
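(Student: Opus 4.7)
The plan is to iterate the one-dimensional Sobolev embedding
\[
\sup_{y\in\re}|g(y)|\ \le\ C_0\sum_{j=0}^m \|g^{(j)}|L_p(\re)\|\,,\qquad g\in W^m_p(\re),
\]
valid for $m\ge 1$ and $1<p<\infty$ (since then $mp>1$), along each of the $d$ coordinate directions in turn.

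By induction on $k=1,\ldots,d$, I would establish the statement that for a.e.\ $(x_{k+1},\ldots,x_d)\in\re^{d-k}$,
\[
\sup_{x_1,\ldots,x_k}|f(x_1,\ldots,x_d)|\ \le\ C_k\sum_{j_1,\ldots,j_k=0}^m \big\|\partial_1^{j_1}\cdots\partial_k^{j_k}f(\cdot,\ldots,\cdot,x_{k+1},\ldots,x_d)\big|L_p(\re^k)\big\|.
\]
The base case $k=1$ is the one-dimensional embedding applied in the variable $x_1$ with the remaining variables frozen; that $f(\cdot,x_2,\ldots,x_d)\in W^m_p(\re)$ for a.e.\ frozen values follows from Fubini together with $\partial_1^{j_1}f\in L_p(\R)$ for $0\le j_1\le m$ (Definition~\ref{def-so}). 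For the inductive step I would split $\sup_{x_1,\ldots,x_k}=\sup_{x_1,\ldots,x_{k-1}}\sup_{x_k}$, apply the one-dimensional embedding to the inner sup in $x_k$, and then commute the outer supremum with the $L_p$-norm in $x_k$ by means of the elementary inequality
\[
\sup_{z}\bigl\|h(\cdot,z)\big|L_p(\re^{k-1})\bigr\|\ \le\ \bigl\|\sup_{z}|h|\,\big|\,L_p(\re^{k-1})\bigr\|
\]
(a direct consequence of $\sup\int\le\int\sup$). Finally I would apply the induction hypothesis to $\partial_k^{j_k}f$ in place of $f$---legitimate since every derivative that arises is $D^\alpha f$ with $|\alpha|_\infty\le m$, hence in $L_p(\R)$---and integrate in $x_k$ to conclude the level-$k$ statement.

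For $k=d$ the induction yields $\|f|L_\infty(\R)\|\le C\,\|f|S^m_pW(\R)\|^{(d)}$, which in view of Definition~\ref{def-so} gives the required boundedness. To upgrade $L_\infty(\R)$ to $C(\R)$, I would approximate $f$ by $f_n\in C_c^\infty(\R)$ in the $S^m_pW(\R)$-norm (a standard density result) and apply the just-proved inequality to $f_n-f_\ell$, concluding that $(f_n)$ is Cauchy in the Banach space $C(\R)$; its uniform limit then provides the continuous representative of $f$. The main obstacle is the bookkeeping in the induction---in particular the repeated interchange of suprema with $L_p$-norms across different coordinate directions, and the a.e.\ justification of the pointwise inequalities at each step.
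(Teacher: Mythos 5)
Your argument is correct, and it is genuinely self-contained: the paper itself does not prove Lemma \ref{emb} but only cites \cite[Remark~2.4.1/2]{ST}. The standard proof behind that citation is Fourier-analytic: one writes $f=\sum_{k\in\N_0^d}\cfi[\varphi_k\cf f]$, applies the Nikol'skij inequality of Proposition \ref{Nikolski} to each dyadic block to get $\|\cfi[\varphi_k\cf f]\,|L_\infty(\R)\|\lesssim 2^{|k|_1/p}\|\cfi[\varphi_k\cf f]\,|L_p(\R)\|$, and then sums using $m>1/p$ together with the Littlewood--Paley description of Proposition \ref{def-do}; continuity comes from uniform convergence of the block sums. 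Your route instead iterates the one-dimensional embedding $W^m_p(\re)\hookrightarrow C(\re)$ coordinate by coordinate, which is more elementary (no Fourier multipliers, no square functions) and lands exactly on the norm $\|\cdot|S^m_pW(\R)\|^{(d)}$ of Definition \ref{def-so}; the price is the measure-theoretic bookkeeping you flag (Fubini slices, identification of distributional derivatives of slices with slices of derivatives, measurability of the suprema). A clean way to discharge all of that at once is to first prove the inequality $\|f|L_\infty(\R)\|\le C\|f|S^m_pW(\R)\|^{(d)}$ for smooth $f$ (say with compactly supported Fourier transform, dense in $S^m_pW(\R)$ by Proposition \ref{def-do}), where every step of your induction is classical, and then invoke the density/Cauchy-sequence argument you already use at the end to transfer both the bound and the continuity to general $f$. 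With that ordering your proof is complete.
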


For a proof we refer to \cite[Remark~2.4.1/2]{ST}.

%&&&&&&&&&&&&&&&&&&&&&&&&&&&&&&&&&&&&&&&&&&&&&&&&&&&&&&&&&&&&&&&&&&&&&&&&&&&&&&&&&&&&&&&&&&&&&&
%&&&&&&&&&&&&&&&&&&&&&&&&&&&&&&&&&&&&&&&&&&&&&&&&&&&&&&&&&&&&&&&&&&&&&&&&&&&&&&&&&&&&&&&&&&&&&&

\subsection{Besov spaces of dominating mixed smoothness}

%&&&&&&&&&&&&&&&&&&&&&&&&&&&&&&&&&&&&&&&&&&&&&&&&&&&&&&&&&&&&&&&&&&&&&&&&&&&&&&&&&&&&&&&&&&&&&&
%&&&&&&&&&&&&&&&&&&&&&&&&&&&&&&&&&&&&&&&&&&&&&&&&&&&&&&&&&&&&&&&&&&&&&&&&&&&&&&&&&&&&&&&&&&&&&&

Next we shall give the definition of Besov spaces of dominating mixed smoothness. Therefore we use differences. 
But before doing that we recall the definition of (isotropic) Besov spaces. \\
 
For a multivariate function $f:\R\to \C$,  $m  \in \N$, $h  \in \R$ and $x \in \R$ we put
 \[
 \Delta_{h}^{m} f(x):= \sum_{\ell =0}^{m} (-1)^{m -\ell} \, \binom{m}{\ell} \, f(x + \ell h )
 \]
and
\[
 \omega_m (f,t)_p := \sup_{|h|<t} \|\, \Delta_{h}^{m} f\, |L_p (\R)\|\, , \qquad t>0\, .
\]
Let $1\leq p\leq \infty$  and $r>0$, $m-1 \le r <m$. Then the (isotropic) Besov space 
$B^r_{p,p}(\R)$ is a collection of all $f\in L_p(\R)$ such that 
\be\label{besov}
\|f|B^r_{p,p}(\R)\|:= \|\, f\, |L_p(\R)\| + \Big( \sum_{j=0}^{\infty} \, (2^{jr}\,  \omega_m (f, 2^{-j}))^p\Big)^{1/p} < \infty. 
\ee
Clearly, in a similar way one could  define the more general spaces $B^r_{p,q}(\R)$, $1\le q \le \infty$, however,
we will not need them here.
\\

\noindent
Now we turn to Besov spaces of dominating mixed smoothness. 
Let $i \in [d]$, $m \in \N$, $h \in \re$ and $x \in \R$ we put
\beqq
 \Delta_{h,j}^{m} f(x):= \sum_{\ell =0}^{m} (-1)^{m-\ell} \, \binom{m}{\ell} \, 
f(x_1, \ldots  , x_{j-1}, x_j + \ell h, x_{j+1}, \ldots  , x_d)\, .
\eeqq
This is the $m$-th order difference of $f$ in direction $j$.  
For $e\subset [d]$, $ h  \in \R$ and $m \in \N_0^d$ the mixed $(m,e)$-th difference operator $\Delta_h^{m,e}$ is defined to be 
 \[
 \Delta_{ h}^{m,e} := \prod_{i \in e} \Delta_{h_i,i}^{m_i} \quad\mbox{and}\quad \Delta_h^{m,\emptyset} :=  \operatorname{Id} \,, 
 \]
where $\operatorname{Id}f = f$. An associated modulus of smoothness is given by
\beqq
 \omega_{m}^e(f,t)_p:= \sup_{|h_i| < t_i, i \in  e}\|\, \Delta_h^{m,e}f \, |L_p(\R)\| \quad,\quad t \in [0,1]^d\,
\eeqq
 for $f \in L_p(\R)$ (in particular, $\omega_{m}^{\emptyset}(f,t)_p = \|f|L_p(\R)\| $). 
Many times, e.g., in the definition below we do not need to choose  $m$ as a vector.
For this reason, if $m= (n,\ldots\, , n)$ we put 
\[
\overline{n}:= (n,\ldots\, , n)\, , \qquad n \in \N\, .
\]
For a set $e\subset [d]$ we denote $$\N_0^d(e)=\big\{k\in \N_0^d: \ k_i=0\ \text{if}\ i\not \in e\big\}.$$
Let $k \in \N_0^d$.
For brevity we write $2^{-k} $ instead of the vector $(2^{-k_1}, 2^{-k_2}, \ldots \, , 2^{-k_d})$

\begin{definition}\label{diff} 
Let $1\leq p \leq \infty$, $r>0$ and $m-1 \le r < m$ for some  $m \in \N$. 
Then the Besov space of dominating mixed smoothness $S^r_{p,p}B(\R)$
is the collection of all $f\in L_p(\R)$ such that
$$
    \|\, f \, | S^r_{p,p}B(\R)\|^{(m)} := 
\sum_{e\subset [d]}\Big(\sum\limits_{k\in \N_0^d(e)} 2^{r|k|_1 p}\omega_{\overline{m}}^{e }(f,2^{-k})_p^{p}\Big)^{1/p}\, 
$$
is finite with the usual modification if $p=\infty$. 
%In case $p=\infty$ the sum $\sum\limits_{k\in \N_0^d(e)} $ has to be replaced by $\sup\limits_{k\in \N_0^d(e)}$.
\end{definition}

\begin{remark}\label{blabla} \rm 
(i) If $d=1$ we get $ S^{r}_{p,p} B(\re) = B^r_{p,p}(\re)$.  
\\
(ii)
Besov spaces of dominating mixed smoothness also have a cross-norm. If $f_i \in B^r_{p,p}(\re)$, $ i=1, ... , d$,
 then its tensor product 
\[
 f(x) :=  (f_1 \otimes f_2 \otimes \, \ldots \, \otimes f_d ) (x) = \prod_{i=1}^d f_i (x_i)\, , \qquad x = (x_1, \, \ldots \, , x_d) \in \R\, ,
 \]
belongs to $S^r_{p,p}B(\R)$ and  
\[
 \| \, f \, | S^r_{p,p}B(\R)\| = \prod_{i=1}^d \|\, f_i \, |B^r_{p,p} (\re)\| \, .
\]
(iii) For the interpretation of $ S^{r}_{p,p} B(\R)$ as tensor products of $B^r_{p,p}(\re)$ we refer to 
\cite{SU09,SU10}.
\end{remark}

Next we recall two properties which will be of certain use later on.

\begin{lemma}\label{diff1}
Let $r >0$ and $1 \le  p \le \infty$. 
Let $m \in \N_0^d$ such that $r < m_i$ for all $i\in [d]$.
Then
$$
   \|\, f \, | S^r_{p,p}B(\R)\|^{(m)} := 
\sum_{e\subset [d]}\Big(\sum\limits_{k\in \N_0^d(e)} 2^{r|k|_1 p}\omega_{{m}}^{e }(f,2^{-k})_p^{p}\Big)^{1/p}\, 
$$
is an equivalent norm on the space $S^r_{p,p} B(\R)$. 
\end{lemma}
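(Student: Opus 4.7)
The plan is to argue the equivalence one coordinate at a time, reducing the $d$-dimensional statement to $d$ applications of a classical one-dimensional fact. By transitivity, it suffices to show $\|\,\cdot\,|S^r_{p,p}B(\R)\|^{(m)} \asymp \|\,\cdot\,|S^r_{p,p}B(\R)\|^{(n)}$ when $m$ and $n$ differ in a single coordinate, say $i_0$, and both satisfy $m_{i_0}, n_{i_0} > r$. Any subset $e \subset [d]$ with $i_0 \notin e$ contributes the same summand to both norms, since the operator $\Delta_h^{m,e}$ does not involve coordinate $i_0$; hence only subsets $e$ containing $i_0$ need to be treated.

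The key one-dimensional ingredient is the Marchaud inequality combined with the discrete Hardy inequality. Assuming without loss of generality $m_{i_0} < n_{i_0}$, one has for every $g \in L_p(\re)$
\[
\omega_{m_{i_0}}(g, 2^{-k})_p \le C\, 2^{-k m_{i_0}} \sum_{j=0}^{k} 2^{j m_{i_0}} \omega_{n_{i_0}}(g, 2^{-j})_p + C\, 2^{-k m_{i_0}} \|\, g\, |L_p(\re)\|,
\]
while the reverse bound $\omega_{n_{i_0}}(g, t)_p \le 2^{n_{i_0}-m_{i_0}} \omega_{m_{i_0}}(g, t)_p$ is immediate from expanding higher-order differences as linear combinations of translates of lower-order ones. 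Multiplying by $2^{kr}$, raising to the $p$-th power, summing over $k\ge 0$, and invoking discrete Hardy (valid because $r < m_{i_0}$) yields the one-dimensional equivalence
\[
\sum_{k \ge 0} 2^{k r p}\, \omega_{m_{i_0}}(g, 2^{-k})_p^p \;\asymp\; \sum_{k \ge 0} 2^{k r p}\, \omega_{n_{i_0}}(g, 2^{-k})_p^p
\]
up to a bounded multiple of $\|\, g\,|L_p(\re)\|^p$.

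To lift this to a mixed modulus $\omega_m^e(f, t)_p$ with $i_0 \in e$, one freezes the differences in the remaining directions of $e$. Writing $h' = (h_i)_{i \in e \setminus \{i_0\}}$, Fubini gives
\[
\|\Delta_h^{m,e} f\|_{L_p(\R)}^p = \int_{\re^{d-1}} \bigl\|\Delta^{m_{i_0}}_{h_{i_0}} \bigl[\Delta_{h'}^{m, e\setminus\{i_0\}} f\bigr](\,\cdot\,, x_{\hat{i_0}})\bigr\|_{L_p(\re)}^p \, dx_{\hat{i_0}},
\]
where $x_{\hat{i_0}}$ denotes the tuple of the $d-1$ coordinates other than $x_{i_0}$. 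One then applies the one-dimensional Marchaud--Hardy equivalence to the slice in direction $i_0$, takes the supremum over $h'$, and sums against $2^{r|k|_1 p}$ over $k \in \N_0^d(e)$. The leading contribution reproduces the summand indexed by $e$ in $\|f|S^r_{p,p}B(\R)\|^{(n)}$, while the Marchaud residual, after summing the convergent geometric sum in $k_{i_0}$ (convergence being ensured by $r < m_{i_0}$), amounts to a constant times the summand indexed by $e\setminus\{i_0\}$, which already appears unchanged in both norms. The main obstacle is precisely this bookkeeping across subsets: one has to verify that the residual $L_p$-tails emerging from Marchaud for a subset $e$ redistribute into the summand indexed by $e\setminus\{i_0\}$, already present in $\|f|S^r_{p,p}B(\R)\|^{(n)}$, so that the finite family of summands closes up. Swapping $m$ and $n$ then yields the reverse inequality and completes the induction.
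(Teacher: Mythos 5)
The paper does not actually prove Lemma \ref{diff1}; it simply refers to \cite[2.3.4]{ST} (for $d=2$) and to \cite{U1}, so there is no in-text argument to compare yours against. Your route --- reduce to changing one coordinate $i_0$ at a time, note that subsets $e$ with $i_0\notin e$ contribute identical summands, and for $i_0\in e$ apply the one-dimensional Marchaud inequality in the $i_0$-direction to the frozen function $\Delta^{m,e\setminus\{i_0\}}_{h'}f$ followed by the discrete Hardy inequality (using $r<m_{i_0}$) --- is the standard and correct way to establish independence of the order of differences, and your key observation that the Marchaud residual $\|\cdot|L_p\|$-terms, after summing the geometric series $\sum_{k_{i_0}}2^{k_{i_0}(r-m_{i_0})p}$, land exactly in the summand indexed by $e\setminus\{i_0\}$ is precisely the bookkeeping that makes the finite family of summands close up. Two small points you should make explicit but which are not gaps: the Marchaud inequality must be applied as a \emph{directional} inequality (the underlying algebraic identity between $\Delta^{m}_{2h}$ and $\Delta^{m}_{h}$, $\Delta^{n}_{h}$ acts only on the variable $x_{i_0}$ and commutes with the difference operators in the remaining coordinates of $e$, so one may take the supremum over $h'$ afterwards, the interchange $\sup\sum\le\sum\sup$ going in the right direction for an upper bound); and the case $p=\infty$ requires the usual replacement of $\ell_p$-sums by suprema, for which the Hardy-type estimate still holds since $r<m_{i_0}$. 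With those remarks the argument is complete and self-contained modulo the classical one-dimensional Marchaud inequality on $\re$.
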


For a proof we refer to \cite[2.3.4]{ST} ($d=2$) and \cite{U1}.

\begin{lemma}\label{emb1}
Let $r >0$ and $1 \le  p \le \infty$. Then
the space $S^r_{p,p} B(\R)$ is continuously embedded into $C (\R)$ if and only if either $r>1/p$ or $r=1=p$.
\end{lemma}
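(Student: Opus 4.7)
My plan is to use a Littlewood--Paley decomposition $f = \sum_{k \in \N_0^d} f_k$ adapted to dominating mixed smoothness, where $f_k$ has Fourier support in a dyadic rectangle of volume $\sim 2^{|k|_1}$ and
\[
\|f|S^r_{p,p}B(\R)\| \sim \Bigl(\sum_{k \in \N_0^d} 2^{r|k|_1 p}\|f_k|L_p(\R)\|^p\Bigr)^{1/p}.
\]
A tensorised Nikol'skii inequality, one factor per coordinate, delivers $\|f_k|L_\infty(\R)\| \lesssim 2^{|k|_1/p}\|f_k|L_p(\R)\|$, which is the engine of both the sufficiency proof and the construction of the counterexamples.

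For the sufficiency, assume first $r > 1/p$. Hölder's inequality in $k \in \N_0^d$ with conjugate exponent $p'$ then yields
\[
\sum_{k \in \N_0^d} \|f_k|L_\infty(\R)\| \lesssim \Bigl(\sum_{k \in \N_0^d} 2^{(1/p - r)|k|_1 p'}\Bigr)^{1/p'} \|f|S^r_{p,p}B(\R)\|,
\]
where the geometric factor factorises across coordinates into $d$ convergent series. At the endpoint $r = 1 = p$ the Hölder step is skipped altogether: one writes directly $\sum_k \|f_k|L_\infty\| \lesssim \sum_k 2^{|k|_1}\|f_k|L_1\| \sim \|f|S^1_{1,1}B(\R)\|$. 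In both cases $\sum_k f_k$ converges absolutely and uniformly, and since each $f_k$ is continuous (as a band-limited tempered distribution in $L_p$), $f$ has a continuous representative. For necessity, suppose neither $r > 1/p$ nor $r = 1 = p$ holds, so either $r < 1/p$, or $r = 1/p$ with $p > 1$. The classical one-dimensional theory then supplies an unbounded function $g \in B^r_{p,p}(\re)$ (a smooth truncation of $|x|^{-\alpha}$ with $\alpha \in (0, 1/p-r)$ in the first case, an iterated-logarithm construction in the critical case). Fix $\phi \in \cs(\re)$ with $\phi(0) = 1$ and set $f(x) := g(x_1) \prod_{i=2}^d \phi(x_i)$. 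By the cross-norm property from Remark~\ref{blabla}(ii), $f \in S^r_{p,p}B(\R)$, but $f(x_1, 0, \ldots, 0) = g(x_1)$ is unbounded, so $f \notin C(\R)$.

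The main obstacle is the critical case $r = 1 = p$, where Hölder degenerates and one must exploit the exact cancellation between the Nikol'skii gain $2^{|k|_1/p}$ and the smoothness weight $2^{r|k|_1}$. The extreme values $p \in \{1,\infty\}$ require only cosmetic adjustments: for $p = \infty$ the condition $r > 1/p$ reduces to $r > 0$ and the Nikol'skii step is trivial, while for $p = 1$ the above handles both the strict case $r > 1$ and the endpoint $r = 1$. A secondary technical point is the verification that the mixed Littlewood--Paley equivalent norm is indeed available across the full range of $p$ used here, which is a standard (though tedious) adaptation of the one-dimensional construction compatible with Lemma~\ref{diff1}.
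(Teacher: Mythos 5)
Your proof is correct, and it follows the standard route: the paper itself gives no proof of Lemma \ref{emb1} but refers to \cite{ST} and \cite{Vybiral}, where sufficiency is obtained exactly as you do (Littlewood--Paley decomposition plus the tensorised Nikol'skij inequality of Proposition \ref{Nikolski} and H\"older in $k$), and necessity by tensoring a one-dimensional counterexample with a bump, using the cross-norm property. The only points worth polishing are notational: for $p=1$, $r>1$ the H\"older factor is a supremum ($p'=\infty$), and one should say explicitly that the unbounded $g$ is \emph{essentially} unbounded so that $f$ admits no bounded continuous representative.
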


For a proof we refer to \cite[2.3.4]{ST} ($d=2$) and \cite{Vybiral}.

%&&&&&&&&&&&&&&&&&&&&&&&&&&&&&&&&&&&&&&&&&&&&&&&&&&&&&&&&&&&&&&&&&&&&&&&&&&&&&&&&&&&
%&&&&&&&&&&&&&&&&&&&&&&&&&&&&&&&&&&&&&&&&&&&&&&&&&&&&&&&&&&&&&&&&&&&&&&&&&&&&&&&&&&&

\subsection{Tools from Fourier analysis}\label{equi-def}

%&&&&&&&&&&&&&&&&&&&&&&&&&&&&&&&&&&&&&&&&&&&&&&&&&&&&&&&&&&&&&&&&&&&&&&&&&&&&&&&&&&&
%&&&&&&&&&&&&&&&&&&&&&&&&&&&&&&&&&&&&&&&&&&&&&&&&&&&&&&&&&&&&&&&&&&&&&&&&&&&&&&&&&&&

Littlewood-Paley characterizations will play an important role in our investigations.
\\
Let $\varphi_0 \in C_0^{\infty}({\re})$ be a non-negative function such that 
 $\varphi_0(\xi) = 1$ on $[-1,1]$ and $\supp\varphi_0 \subset [-\frac{3}{2},\frac{3}{2}]$. 
For $j\in \N$ we define
\beqq
         \varphi_j(\xi) = \varphi_0(2^{-j}\xi)-\varphi_0(2^{-j+1}\xi) ,\qquad\ \xi \in \re\, , 
\eeqq
and  
\beqq
\varphi_{k}(x) := \varphi_{k_1}(x_1)\cdot...\cdot
         \varphi_{k_d}(x_d)\, , \qquad  x \in \R, \quad k\in \N_0^d\,. 
\eeqq        
This implies 
\beqq
\sum_{k\in \N_0^d} \varphi_k(x)& = & 1 \qquad  \text{for all}\ x\in \R\, , 
\\
\supp \varphi_k & \subset & \Big\{ x \in \R: 2^{k_\ell-1}\le |x_\ell| \le 3 \, 2^{k_\ell-1}\, , \quad \ell =1, \ldots \, , d\Big\}\,.
\eeqq
We shall call the system $\{\varphi_k\}_{k\in \N_0^d}$ a smooth  dyadic decomposition of unity on $\R$.
Let $\chi_0$ be the characteristic function of $[-1,1]$. Let further $\chi_j$, $j\in \N$, be the 
characteristic function of $[-2^j,-2^{j-1})\cup(2^{j-1}, 2^{j}]$. For $k\in \N_0^d$ we define $\chi_k(x)$, $x\in \R$,
as a  tensor product, i.e.,
\be \label{chi}
\chi_k(x) := \chi_{k_1}(x_1)\cdot\, \ldots \, \cdot \chi_{k_d}(x_d)\,,  \qquad  x \in \R, \quad k\in \N_0^d\,. 
\ee
The system $\{\chi_k\}_{k\in \N_0^d}$ represents a nonsmooth  dyadic decomposition of unity on $\R$.

\begin{proposition}\label{def-do}
Let $\{\varphi_k\}_{k\in \N_0^d}$ be the above system.\\
{(i)} Let $1<p<\infty$ and $m\in \N_0$. Then $S^m_pW(\R)$ is the collection of all tempered distributions $f\in  \mathcal{S}'(\R)$
such that
\[
 \|\, f \, |S^m_pW(\R)\|^{\varphi} :=
\Big\| \Big(\sum\limits_{k\in \N_0^d} 2^{2|k|_1 m }\, \big|\, \cfi[\varphi_{k}\, \cf f](\, \cdot \, )\big|^2 \Big)^{1/2} \Big|L_p(\re^d)\Big\|<\infty.
\]
(ii) Let $1\leq p \le \infty$ and  $r>0$. Then  $ S^{r}_{p,p}B(\re^d)$ is the
         collection of all tempered distributions $f \in \mathcal{S}'(\R)$
         such that
\beqq
          \|\, f \, |S^r_{p,p}B(\R)\|^{\varphi} :=
         \Big(\sum\limits_{k\in \N_0^d} 2^{r|k|_1  p}\, \|\, \cfi[\varphi_{k}\, \cf f]
         |L_p(\re^d)\|^p\Big)^{1/p} <\infty.
\eeqq
(iii) If we replace the smooth system $\{\varphi_k \}_k$   by the nonsmooth $\{\chi_k \}_k$ in (i) and (ii) then  we  obtain  equivalent norms in 
case $1 <p<\infty$ in the corresponding spaces.      
\end{proposition}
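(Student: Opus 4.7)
The plan is to exploit the tensor product structure $\varphi_k(x)=\prod_{i=1}^d\varphi_{k_i}(x_i)$ together with the factorization $2^{|k|_1 m}=\prod_i 2^{k_i m}$ in order to reduce each of (i), (ii), (iii) to the classical one-dimensional Littlewood--Paley characterizations of $W^m_p(\re)$ and $B^r_{p,p}(\re)$, combined with their vector-valued extensions.

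For (i), I would denote by $S^{(i)}_{j}$ the one-dimensional Littlewood--Paley projection $g\mapsto \cf^{-1}_{(i)}[\varphi_j\, \cf_{(i)} g]$ acting only in the $i$-th coordinate, so that $\cfi[\varphi_k\,\cf f]= S^{(1)}_{k_1}\circ\cdots\circ S^{(d)}_{k_d}f$ and the square function factorizes accordingly. I would then apply the Hilbert-valued Littlewood--Paley theorem in direction~$1$ to the $\ell_2(\N_0)$-valued function obtained by freezing the other variables; since $L_p$ is a UMD space for $1<p<\infty$, this step converts the $k_1$-sum into $\|\partial_1^m\,\cdot\,|L_p\|+\|\,\cdot\,|L_p\|$. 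Iterating in the remaining $d-1$ directions reduces $\|f|S^m_pW(\R)\|^\varphi$ to $\sum_{\alpha\in\{0,m\}^d}\|D^\alpha f|L_p(\R)\|$, which by Lemma~\ref{equi} is an equivalent norm on $S^m_pW(\R)$.

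For (ii) the same tensor-product reduction works, with the square function replaced by an $\ell_p$-sum and no UMD input required (the argument runs for all $1\le p\le\infty$). One applies the one-dimensional Littlewood--Paley characterization of $B^r_{p,p}(\re)$ together with Fubini iteratively in each variable, using $\|\,\cdot\,|\ell_p(L_p)\|=\|\,\cdot\,|L_p(\ell_p)\|$, and finally identifies the result with the norm from Definition~\ref{diff} through the classical one-dimensional equivalence of the Fourier-analytic and difference-based norms. For (iii), in order to pass from $\varphi_k$ to $\chi_k$, I would observe that $\chi_{k_i}=\chi_{k_i}\cdot(\varphi_{k_i-1}+\varphi_{k_i}+\varphi_{k_i+1})$ (with the obvious modification for $k_i=0$), so that the one-dimensional truncation $g\mapsto \cf^{-1}_{(i)}[\chi_j\,\cf_{(i)} g]$ is uniformly bounded on $L_p(\re)$ for $1<p<\infty$ (this is essentially the $L_p$-boundedness of the Hilbert transform). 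Combined with the vector-valued Mikhlin theorem to upgrade to $\ell_2$- and $\ell_p$-valued estimates in each direction, this yields the two-sided equivalence of the $\varphi$- and $\chi$-versions of the norms.

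The main obstacle is carrying out the vector-valued Littlewood--Paley step in (i) and the iterated vector-valued Mikhlin argument in (iii) cleanly across the $d$ directions; once those are in place, the rest is bookkeeping made transparent by the tensor factorization of $\varphi_k$, $\chi_k$ and of the weights $2^{|k|_1 m}$, $2^{|k|_1 r}$.
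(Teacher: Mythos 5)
The paper does not actually prove Proposition~\ref{def-do}; in Remark~\ref{re-chi} it simply cites \cite{ST}, \cite{U1} for (i), (ii) and the Lizorkin representation (\cite{Li1}, \cite[2.5.4]{Tr83}) for (iii). Your sketch follows exactly the route taken in those references: tensor factorization of $\varphi_k$ and of the weights, iterated one-dimensional vector-valued Littlewood--Paley/Fourier-multiplier theorems (with $\ell_2$-valued, hence UMD-based, estimates needed only in the $W$-case), and the uniform $L_p(\re)$-boundedness of the sharp cut-off $\chi_j$ for the nonsmooth decomposition. So the approach is the standard one and is sound.

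The only place where your outline understates the work is the last step of (ii): identifying the Fourier-analytic norm with the difference norm of Definition~\ref{diff} is \emph{not} obtained by iterating the classical one-dimensional equivalence, because the mixed modulus $\omega_{\overline{m}}^{e}(f,2^{-k})_p$ involves a simultaneous supremum over all $h_i$, $i\in e$, which does not commute with the $L_p$-norm and hence does not factor through the coordinate directions. The estimate of $\omega_{\overline{m}}^{e}$ by the dyadic pieces genuinely requires the multivariate Peetre maximal function machinery (Lemmas~\ref{1dim-1}, \ref{ddim-1} and Proposition~\ref{peetremax}), and the converse requires a representation/approximation argument; this is precisely the content of \cite[2.3.3--2.3.4]{ST} and \cite{U1}. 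With that step supplied (or cited), your argument is complete.
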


\begin{remark}\label{re-chi}\rm 
Concerning a proof of part (i) we refer to \cite[Theorem~2.3.1]{ST}.
For $m=0$ part (i) is just a variant of the  Littlewood-Paley assertions, 
see, e.g.,  Lizorkin \cite{Liz,Li} or Nikol'skij \cite[1.5.6]{Ni}. 
The proof of Proposition \ref{def-do}(ii) can be found in \cite[2.3.3,~2.3.4]{ST} and \cite{U1}, 
see also \cite{NUU}. 
The proof of (iii) is a straightforward modification of a similar assertion in the isotropic case, called Lizorkin representations.
We refer to Lizorkin \cite{Li1} and \cite[2.5.4]{Tr83}.
\end{remark}

Next we will collect some required tools from Fourier analysis. We recall an adapted version of the famous Nikol’skij
inequality, see Uninskij \cite{Un1,Un2},  St\"ockert \cite{St} or \cite[Theorem 1.6.2]{ST}.

\begin{proposition}\label{Nikolski} 
Let $0<p_0\leq p\leq \infty$ and ${\alpha}=(\alpha_1,...,\alpha_d)\in \N_0^d$. 
Let $\Omega=[-b_1,b_1]\times \cdots \times [-b_d,b_d]$, $b_i>0$, $i=1,...,d$. 
Then there exists a positive constant $C$, independent of $(b_1, \ldots , b_d)$, such that 
\beqq
\| D^{\alpha}f|L_{p}(\R)\| \leq C\Big(\prod_{i=1}^d b_i^{\alpha_i+\frac{1}{p_0}-\frac{1}{p}}\Big) \|f|L_{p_0}(\R)\|
\eeqq
holds for all $f\in L_{p_0}(\R) \cap \cs'(\R)$ with $\supp \gf f \subset \Omega$\,.
\end{proposition}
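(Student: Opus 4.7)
The plan is to reduce the inequality to a convolution with an explicit smooth kernel adapted to $\Omega$, and then to split cases according to whether $p_0\ge 1$ (use Young's convolution inequality) or $0<p_0<1$ (use a Plancherel--Polya--Nikol'skij pointwise maximal estimate). For the set-up, fix $\psi\in C_0^\infty(\re^d)$ with $\psi\equiv 1$ on $[-1,1]^d$ and $\supp\psi\subset[-2,2]^d$, and set $\psi_b(\xi):=\psi(\xi_1/b_1,\ldots,\xi_d/b_d)$. Since $\psi_b\equiv 1$ on $\Omega$ and $\supp\cf f\subset\Omega$, one has $\cf(D^{\alpha}f)(\xi)=(i\xi)^{\alpha}\psi_b(\xi)\,\cf f(\xi)$, hence $D^{\alpha}f=K^{\alpha}_b\ast f$ with $K^{\alpha}_b:=\cfi[(i\xi)^{\alpha}\psi_b]$. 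A change of variables in the inverse Fourier integral gives
\[
K^{\alpha}_b(x)\;=\;\Big(\prod_{i=1}^d b_i^{\alpha_i+1}\Big)\,K^{\alpha}_1(b\diamond x),\qquad K^{\alpha}_1:=\cfi[(i\xi)^{\alpha}\psi]\in\cs(\R),
\]
which cleanly isolates all $b$-dependence in the kernel.

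For $1\le p_0\le p\le\infty$ I would choose $r\in[1,\infty]$ by $1+1/p=1/r+1/p_0$ (the hypothesis $p_0\le p$ ensures $r\ge 1$) and apply Young's convolution inequality to obtain
\[
\|\,D^{\alpha}f\,|L_p(\R)\|\;\le\;\|\,K^{\alpha}_b\,|L_r(\R)\|\cdot\|\,f\,|L_{p_0}(\R)\|.
\]
A further change of variables in the $L_r$-norm gives $\|\,K^{\alpha}_b\,|L_r(\R)\|=\big(\prod_i b_i^{\alpha_i+1-1/r}\big)\|\,K^{\alpha}_1\,|L_r(\R)\|$, and the arithmetic identity $1-1/r=1/p_0-1/p$ produces exactly the exponents of the $b_i$ asserted in the proposition. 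The constant $C=\|\,K^{\alpha}_1\,|L_r(\R)\|$ is finite since $K^{\alpha}_1\in\cs(\R)$, and visibly independent of $(b_1,\ldots,b_d)$.

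The main obstacle is the quasi-Banach range $0<p_0<1$, where Young's inequality fails. Here I would invoke the Plancherel--Polya--Nikol'skij pointwise majorisation: using the rapid decay of $K^{0}_1$ together with the convolution identity $g=K^{0}_b\ast g$, which holds for every tempered distribution $g$ with $\supp\cf g\subset\Omega$, one establishes
\[
|g(x)|^{p_0}\;\le\; C_N\,\Big(\prod_{i=1}^d b_i\Big)\,\int_{\R}|g(y)|^{p_0}\,\prod_{i=1}^d\big(1+b_i|x_i-y_i|\big)^{-N}\,dy
\]
for any sufficiently large $N$. Applied with $g=D^{\alpha}f$ (whose Fourier transform is again supported in $\Omega$, and whose $L_{p_0}$-norm is controlled via the companion Bernstein bound $\|\,D^{\alpha}f\,|L_{p_0}\|\le C\prod_i b_i^{\alpha_i}\|\,f\,|L_{p_0}\|$ obtained along exactly the same lines), taking the supremum in $x$ gives the endpoint $p=\infty$ with the correct factor $\prod_i b_i^{\alpha_i+1/p_0}$; the endpoint $p=p_0$ is the Bernstein bound just mentioned. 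The intermediate range $p_0<p<\infty$ then follows from the elementary interpolation $\|g\|_p^p\le\|g\|_\infty^{p-p_0}\|g\|_{p_0}^{p_0}$, which produces precisely the scaling factor $\prod_i b_i^{\alpha_i+1/p_0-1/p}$ claimed in the proposition.
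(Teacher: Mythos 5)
Your argument is correct. The paper itself gives no proof of Proposition \ref{Nikolski} --- it is recalled from Uninskij, St\"ockert and \cite[Theorem 1.6.2]{ST} --- and your proof (reduction to convolution with the dilated Schwartz kernel $K^{\alpha}_b$, Young's inequality for $1\le p_0\le p$, and the Plancherel--Polya--Nikol'skij pointwise majorisation combined with $\|g\|_p^p\le\|g\|_\infty^{p-p_0}\|g\|_{p_0}^{p_0}$ for $0<p_0<1$) is essentially the standard argument found in those references. The one phrase to tighten is that the companion Bernstein bound $\|D^{\alpha}f\,|L_{p_0}\|\lesssim \prod_i b_i^{\alpha_i}\|f\,|L_{p_0}\|$ for $0<p_0<1$ does \emph{not} follow ``along exactly the same lines'' as the Banach case, since Young's inequality is unavailable there; it must also be routed through the maximal estimate (e.g.\ the pointwise bound $|D^{\alpha}f(x)|\le P_{b,a}f(x)\int|D^{\alpha}K^{0}_b(z)|\prod_i(1+b_i|z_i|)^a\,dz\lesssim \prod_i b_i^{\alpha_i}P_{b,a}f(x)$ together with the boundedness of the Peetre maximal operator for $a>1/p_0$), machinery you have already invoked.
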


The following construction of a maximal function is essentially  due to Peetre, but based on earlier work of Fefferman and Stein.
Let $a>0$ and  $b=(b_1,...,b_d)$, $b_i>0$, $i=1,...,d$ be fixed. Let $f$ be a regular distribution  such that  $\gf f$ is compactly
supported. We define the Peetre maximal function $P_{b,a}f$ by
\beqq
  P_{b,a}f(x) = \sup\limits_{z\in \R} \frac{|f(x-z)|}{\prod_{i=1}^d(1+|b_iz_i|)^a}\, , \qquad x \in \R\, .
  %= \sup\limits_{y\in \R} \frac{|f(y)|}{\prod_{i=1}^d(1+|b_i(x_i-y_i)|)^a} .
\eeqq

\begin{proposition}\label{peetremax}
Let $1 \le p \leq\infty$ and $\Omega=[-b_1,b_1]\times \cdots \times [-b_d,b_d]$, $b_i>0$, $i=1,...,d$. Let further $a>1/p$. 
Then there exists a positive constant $C$,  independent of $(b_1, \ldots , b_d)$, such that
\beqq
\big\| P_{b,a}f \big|L_p(\R)\big\|\leq C\, \|f |L_p(\R)\|
\eeqq
holds for all $f \in L_p(\R)$ with $\supp (\gf f)\subset \Omega$.
\end{proposition}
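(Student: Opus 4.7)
The plan is to carry out the classical Peetre--Fefferman--Stein argument in three steps, adapted from the isotropic case to the rectangular-support (anisotropic) setting; the condition $a>1/p$ enters only at the very end through an application of the strong Hardy--Littlewood maximal theorem.

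First I would establish a Plancherel--Polya--Nikol'skij type sub-mean value inequality: for every $r>0$ there is an exponent $M=M(r,d)$ and a constant $C=C(r,d)$ such that for all $f\in\cs'(\R)$ with $\supp\cf f\subset\Omega$,
\[
|f(x)|^r \,\le\, C\, b_1\cdots b_d \int_{\R}\frac{|f(y)|^r}{\prod_{i=1}^d (1+b_i|x_i-y_i|)^M}\,dy,\qquad x\in\R,
\]
with $C$ independent of $b=(b_1,\ldots,b_d)$ and of $f$. For $r=1$ this is a direct consequence of Paley--Wiener: pick $\psi\in\cs(\re)$ with $\cf\psi\equiv 1$ on $[-1,1]$ and set $\psi_b(x):=b_1\cdots b_d\,\psi(b_1x_1)\cdots\psi(b_dx_d)$, so $\cf\psi_b\equiv 1$ on $\Omega$ and $f=f*\psi_b$; the Schwartz decay of $\psi_b$ then gives the required estimate. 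For $r<1$ one combines the $r=1$ case with Proposition \ref{Nikolski} applied on a dyadic decomposition of $\R$, exactly in the spirit of the isotropic treatment in \cite{ST}.

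Next I would convert this into the sub-mean value inequality for the Peetre maximal function itself. Applying the bound at $x-z$ and substituting $u=x-y$ gives
\[
|f(x-z)|^r \,\le\, C\, b_1\cdots b_d \int_{\R}\frac{|f(x-u)|^r}{\prod (1+b_i|u_i-z_i|)^{M}}\,du .
\]
Using $1+b_i|u_i|\le(1+b_i|u_i-z_i|)(1+b_i|z_i|)$ in the form
\[
\frac{1}{(1+b_i|z_i|)^{ar}} \,\le\, \frac{(1+b_i|u_i-z_i|)^{ar}}{(1+b_i|u_i|)^{ar}},
\]
I divide by $\prod(1+b_i|z_i|)^{ar}$ to absorb the $z$-dependence; provided $M\ge ar$ the remaining factor $(1+b_i|u_i-z_i|)^{M-ar}$ is $\ge 1$ and the supremum over $z\in\R$ can be taken on the left, yielding the pointwise bound
\[
[P_{b,a}f(x)]^r \,\le\, C\, b_1\cdots b_d \int_{\R}\frac{|f(y)|^r}{\prod (1+b_i|x_i-y_i|)^{ar}}\,dy.
\]

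Finally, since $a>1/p$, I choose $r$ with $1/a<r<p$, which forces $ar>1$ and $p/r>1$ simultaneously. The convolution kernel $\prod b_i(1+b_i|\cdot|)^{-ar}$ then factors as a product of one-dimensional integrable decreasing kernels, and is therefore dominated pointwise by a constant times the strong (iterated) Hardy--Littlewood maximal function, so that $[P_{b,a}f(x)]^r\le C\,M_{\mathrm{str}}(|f|^r)(x)$. Taking $L_{p/r}$-norms and using the boundedness of $M_{\mathrm{str}}$ on $L_{p/r}(\R)$ (which uses precisely $p/r>1$) gives
\[
\|P_{b,a}f\,|L_p(\R)\|^r \,=\, \big\||P_{b,a}f|^r\,|L_{p/r}(\R)\big\| \,\le\, C\,\||f|^r\,|L_{p/r}(\R)\| \,=\, C\,\|f\,|L_p(\R)\|^r,
\]
with constant independent of $b$; the case $p=\infty$ follows directly from the sub-mean estimate since the kernel is then integrable. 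The main obstacle is the first step: formulating the Plancherel--Polya estimate with the correct $b$-scaling uniformly in the rectangle $\Omega$ (in particular for $r<1$). Once this is established, everything else is formal manipulation plus the standard strong maximal theorem.
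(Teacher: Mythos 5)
Your argument is correct and is essentially the classical Peetre--Fefferman--Stein proof (Plancherel--Polya sub-mean value inequality, absorption into the maximal function, then domination by the iterated Hardy--Littlewood maximal operator using $ar>1$ and $p/r>1$), which is exactly the argument in the reference \cite[Thm.~1.6.2]{Tr83} that the paper cites in lieu of a proof. The only point you defer --- the case $r<1$ of the sub-mean value inequality, whose standard proof requires an a priori finiteness/absorption argument for $P_{b,a}f$ --- is indeed the technical heart, but it is a known result and your reduction to it is sound.
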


For a proof we refer to \cite[Thm.~1.6.2]{Tr83}. 
A very useful  relation between Peetre maximal function and differences is given by the following 
lemma, see \cite{U1} and  \cite[2.3.3]{ST} (two-dimensional case).

 \begin{lemma}\label{1dim-1}
 Let $a>0$ and $m \in \N$.  
 Then there exists a constant $C$
 such that 
\beqq
     |\Delta^m_hf(t)| \leq  C\, \max\{1,|bh|^a\}\, \min\{1,|bh|^m\}\, P_{b,a}f(t)\,.
\eeqq
 holds for all $b \ge 1$, all $h\neq 0$, all $t\in \re$ and all $f\in \cs'(\re)$ satisfying $\supp(\gf f) \subset
 [-b,b]$.  
 \end{lemma}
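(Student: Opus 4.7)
The natural strategy is a case split based on the size of $|bh|$, matching the two factors on the right-hand side. Observe that for $|bh|\ge 1$ we have $\max\{1,|bh|^a\}\min\{1,|bh|^m\}=|bh|^a$, while for $|bh|\le 1$ the product equals $|bh|^m$. It therefore suffices to establish
\begin{equation*}
|\Delta^m_h f(t)|\le C\,|bh|^a\,P_{b,a}f(t)\quad (|bh|\ge 1),\qquad |\Delta^m_h f(t)|\le C\,|bh|^m\,P_{b,a}f(t)\quad (|bh|\le 1).
\end{equation*}

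The first (large-$h$) bound is the direct one. Using the defining expansion $\Delta^m_h f(t)=\sum_{\ell=0}^{m}(-1)^{m-\ell}\binom{m}{\ell}f(t+\ell h)$ and the Peetre inequality $|f(t+\ell h)|\le (1+|b\ell h|)^{a}P_{b,a}f(t)$, one gets for $|bh|\ge 1$ the estimate $(1+|b\ell h|)^{a}\le (m+1)^{a}|bh|^{a}$, so that the claim follows with $C=2^{m}(m+1)^{a}$.

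The second (small-$h$) bound is the real obstacle and uses the compact Fourier support. Here I plan to combine the integral form of the difference,
\begin{equation*}
\Delta^m_h f(t)=h^m\int_{[0,1]^m} f^{(m)}\bigl(t+h(s_1+\cdots+s_m)\bigr)\,ds_1\cdots ds_m,
\end{equation*}
with a pointwise bound $|f^{(m)}(t+h\sigma)|\le C\,b^{m}\,P_{b,a}f(t)$ valid for $\sigma\in[0,m]$ whenever $|bh|\le 1$. To prove this bound, I would pick $\psi\in C_0^{\infty}(\re)$ with $\psi\equiv 1$ on $[-1,1]$ and set $\Phi:=\cfi\psi\in\cs(\re)$. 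Since $\supp\cf f\subset[-b,b]$, the reproducing identity $\cf f=\psi(\cdot/b)\cf f$ gives, after differentiation and rescaling $\xi=b\zeta$,
\begin{equation*}
f^{(m)}(x)=\frac{b^m}{\sqrt{2\pi}}\int_{\re}\Phi^{(m)}(u)\,f\!\bigl(x-u/b\bigr)\,du.
\end{equation*}
Applying this at $x=t+h\sigma$ and estimating $|f(t+h\sigma-u/b)|\le P_{b,a}f(t)\,(1+|u-bh\sigma|)^{a}\le P_{b,a}f(t)\,(1+|u|)^{a}(1+m)^{a}$ (using $|bh\sigma|\le m$) yields the bound $|f^{(m)}(t+h\sigma)|\le C\,b^{m}P_{b,a}f(t)$, with $C$ depending only on $a$, $m$ and the finite Schwartz moment $\int|\Phi^{(m)}(u)|(1+|u|)^{a}du$. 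Inserting this into the integral representation of $\Delta^m_h f(t)$ gives $|\Delta^m_h f(t)|\le C\,|h|^{m}b^{m}P_{b,a}f(t)=C\,|bh|^{m}P_{b,a}f(t)$.

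The delicate point is checking that the constant $C$ in the small-$h$ case is truly independent of $b$ and $h$; this is where the scaling symmetry of the cutoff $\psi(\cdot/b)$ and the Schwartz decay of $\Phi$ do the work. Once both cases are in hand, combining them produces the claimed joint bound with $\max\{1,|bh|^{a}\}\min\{1,|bh|^{m}\}$.
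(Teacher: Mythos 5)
Your proof is correct and follows the standard argument for this lemma (the paper itself does not prove it but refers to \cite{U1} and \cite[2.3.3]{ST}): the case split at $|bh|=1$, the direct Peetre estimate for $|bh|\ge 1$, and the integral representation of $\Delta^m_h$ combined with the reproducing convolution identity $f=\cfi[\psi(\cdot/b)]\ast f$ (up to the normalizing constant) for $|bh|\le 1$ are exactly the classical route. The only cosmetic point is that $\psi$ should be chosen equal to $1$ on a neighbourhood of $[-1,1]$, not merely on $[-1,1]$, so that $(\psi(\cdot/b)-1)\,\cf f=0$ holds in $\cs'(\re)$.
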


 Applying the above result iteratively  with respect to components in $e\subset [d]$ we get the following modified version in multivariate situation.
 
 \begin{lemma}\label{ddim-1}
 Let  $a>0$, $e\subset [d]$, $m \in \N_0^d$  and $h =
 (h_1,...,h_d) \in \R$. Let further $f\in
 \mathcal{S}'(\R)$ with $\supp(\mathcal{F} f) \subset Q_{b}$, where
 $$
   Q_{b}:=[-b_1,b_1]\times...\times [-b_d,b_d]\,,\ \ b_i>0,\ \ i=1,...,d.
 $$
 Then there exists a constant $C>0$ (independent of $f$, $b$, $x$
 and $h$) such that
 \begin{equation*}
 |\Delta^{m,e}_h  f(x)|
 \leq C\, \bigg(\prod\limits_{i\in e}\, \max\{1,|b_ih_i|^a\}\, \min\{1,|b_ih_i|^{m_i}\}
 \bigg)\,  P_{b,a} f(x)
 \end{equation*}
 holds for all $x\in \R$. 
 \end{lemma}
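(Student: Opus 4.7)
The plan is to iterate Lemma \ref{1dim-1} along each direction in $e$, using induction on $|e|$. The key technical point will be that each iteration generates a one-variable Peetre-type supremum, and these nested suprema must ultimately be absorbed into the single $d$-dimensional Peetre maximal function $P_{b,a}f$ on the right-hand side.

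For the base case $e = \emptyset$, the operator $\Delta^{m,\emptyset}_h$ is the identity and the empty product equals one, so the claim reduces to $|f(x)| \le P_{b,a} f(x)$, which is immediate upon taking $z=0$ in the definition of $P_{b,a}$. For the inductive step, pick some $i \in e$ and set $e' = e \setminus \{i\}$. Since the one-variable difference operators commute and multiplication by exponentials preserves Fourier supports, the function $g := \Delta^{m,e'}_h f$ still satisfies $\supp \cf g \subset Q_b$. Therefore Lemma \ref{1dim-1} applies in direction $i$ (holding the other variables fixed) and gives
\begin{equation*}
|\Delta^{m,e}_h f(x)| \le C\, \max\{1,|b_i h_i|^a\}\, \min\{1,|b_i h_i|^{m_i}\} \sup_{z\in\re} \frac{|g(x_1,\ldots,x_i-z,\ldots,x_d)|}{(1 + |b_i z|)^a}.
\end{equation*}
Inserting the induction hypothesis applied pointwise to $g$ at the argument $(x_1,\ldots,x_i-z,\ldots,x_d)$ then produces $P_{b,a}f(x_1,\ldots,x_i-z,\ldots,x_d)$ inside the remaining one-variable supremum, together with the correct product of factors for $j \in e'$.

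It remains to establish the absorption inequality
\begin{equation*}
\sup_{z\in\re} \frac{P_{b,a}f(x_1,\ldots,x_i - z,\ldots,x_d)}{(1 + |b_i z|)^a} \le P_{b,a} f(x).
\end{equation*}
Unfolding the left-hand side as a double supremum---over $z \in \re$ and over the inner vector $w \in \R$ from the definition of $P_{b,a}$---and changing variables so that the new vector $u$ satisfies $u_j = w_j$ for $j \ne i$ and $u_i = z + w_i$, the only denominator factors not already of the form appearing in $P_{b,a}f(x)$ are $(1 + |b_i z|)^a$ and $(1 + |b_i(u_i - z)|)^a$; by the elementary estimate $(1+|\alpha|)(1+|\beta|) \ge 1 + |\alpha+\beta|$ their product dominates $(1 + |b_i u_i|)^a$, and the remaining $(1+|b_j u_j|)^a$ factors reconstitute the $d$-dimensional Peetre denominator. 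The main obstacle is precisely this absorption step: one cannot simply commute the outer difference operators with the supremum inside the Peetre function, so the bookkeeping must be carried out in exactly this order---pointwise induction hypothesis first, absorption of the one-variable supremum into the $d$-dimensional one second.
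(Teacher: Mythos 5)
Your proposal is correct and follows exactly the route the paper intends: the paper gives no written proof of Lemma \ref{ddim-1}, stating only that it follows by applying Lemma \ref{1dim-1} iteratively over the components of $e$, and your induction fills in precisely those details. The absorption step you isolate --- collapsing the nested one-variable Peetre supremum into $P_{b,a}f(x)$ via $(1+|\alpha|)(1+|\beta|)\ge 1+|\alpha+\beta|$ --- is the right (and only nontrivial) ingredient, and your bookkeeping is sound.
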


Finally, we recall a Fourier multiplier assertion for vector-valued $L_p-$spaces of entire analytic function, see \cite[Proposition 2.3.5]{Hansen} 
or \cite[Theorem 1.10.3]{ST} (two-dimensional case).

 \begin{lemma}\label{mul1}
 Let $0<p< \infty$ and $\Omega=\{\Omega_{\ell}\}_{\ell\in \N_0^d} $ be a sequence of compact subsets of $\R$ given by
 \beqq
 \Omega_{\ell} = [-b_1^{\ell},b_1^{\ell}]\times\cdots \times [-b_d^{\ell},b_d^{\ell}] \,.  
 \eeqq  
 Let $r>\frac{1}{\min(p,2)}+\frac{1}{2}$, $r\in \N$. Then there exists a constant $C>0$ such that 
 \beqq
 \bigg\|\Big(\sum_{\ell\in \N_0^d}  \big|\gf^{-1}  M_{\ell}\gf f_{\ell} \big|^2\Big)^{1/2}\,\bigg|\,L_p(\R)\bigg\| 
\leq C\, \sup_{\ell\in \N_0^d} 
 \, \| M_{\ell}(b^{\ell}\, \cdot)|S^r_2W(\R)\| \,  \Big\|\Big(\sum_{\ell\in \N_0^d}  \big|  f_{\ell} \big|^2\Big)^{1/2}\,\Big|\,L_p(\R)\Big\|
 \eeqq
 holds for all systems $ \{f_{\ell}\}_{\ell}$, satisfying $ \Big(\sum_{\ell\in \N_0^d}  \big|  f_{\ell} \big|^2\Big)^{1/2}\in L_p(\R)$ and  
 $\supp(\gf f_{\ell}) \subset \Omega_{\ell}$, $\ell\in \N_0^d$, and all systems $\{M_{\ell}\}_{\ell} \in S^r_2W(\R)$.
 \end{lemma}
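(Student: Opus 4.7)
The plan is to reduce the multiplier estimate to a pointwise bound in terms of the Peetre maximal function $P_{b^\ell,a}f_\ell$ and then invoke a vector-valued version of Proposition~\ref{peetremax}. Fix $\psi \in C_0^\infty(\R)$ with $\psi \equiv 1$ on $[-1,1]^d$ and $\supp\psi \subset [-2,2]^d$, and set $\psi_\ell(\xi) = \psi(\xi_1/b_1^\ell, \ldots, \xi_d/b_d^\ell)$, so $\psi_\ell \equiv 1$ on $\Omega_\ell$ and hence $\cfi[M_\ell\, \cf f_\ell] = \cfi[M_\ell \psi_\ell] * f_\ell$. Using the Peetre weight bound $|f_\ell(x-y)| \le P_{b^\ell,a}f_\ell(x)\prod_{i=1}^d(1+|b_i^\ell y_i|)^a$ inside the convolution yields
\[
|\cfi[M_\ell\, \cf f_\ell](x)| \le P_{b^\ell,a}f_\ell(x)\cdot \int_{\R} |\cfi[M_\ell \psi_\ell](y)|\prod_{i=1}^d(1+|b_i^\ell y_i|)^a\,dy.
\]

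Next I would execute the change of variables $z = b^\ell \diamond y$ and use the Fourier dilation rule to rewrite the integral as $J_\ell := \int_{\R} |\cfi[M_\ell(b^\ell\diamond\cdot)\,\psi](z)|\prod_i(1+|z_i|)^a\,dz$, in which the anisotropic dilation now appears only inside $M_\ell$. Cauchy--Schwarz against the weight $\prod_i(1+|z_i|)^{a-r}$ gives integrability exactly when $r > a + 1/2$, and Plancherel applied to $\int(1+|z_i|^{2r})|\cfi g|^2\,dz \asymp \|\,g\,|\,S^r_2 W(\R)\|^2$ (valid for $r\in\N$, via Lemma~\ref{equi}), together with the Leibniz rule absorbing $\psi$, converts the remaining weighted $L_2$-norm into $\|\,M_\ell(b^\ell\diamond\cdot)\,|\,S^r_2 W(\R)\|$. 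Combining these steps produces the crucial pointwise bound
\[
|\cfi[M_\ell\,\cf f_\ell](x)| \lesssim \|\, M_\ell(b^\ell\diamond\cdot)\,|\,S^r_2 W(\R)\| \cdot P_{b^\ell,a}f_\ell(x).
\]

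Taking the $\ell_2$-norm over $\ell \in \N_0^d$ on both sides and then the $L_p$-norm, the supremum in $\ell$ factors out and one is left to verify the vector-valued Peetre maximal inequality
\[
\Big\|\,\Big(\sum_{\ell\in\N_0^d}(P_{b^\ell,a}f_\ell)^2\Big)^{1/2}\,\Big|\,L_p(\R)\Big\|\lesssim \Big\|\,\Big(\sum_{\ell\in\N_0^d}|f_\ell|^2\Big)^{1/2}\,\Big|\,L_p(\R)\Big\|
\]
for $a > 1/\min(p,2)$. This is a Fefferman--Stein-type bound obtained by iterating, in each coordinate, the one-dimensional $\ell_2$-valued Hardy--Littlewood maximal inequality after dominating $P_{b^\ell,a}f_\ell$ by a product of coordinate-wise maximal functions raised to a suitable power. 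Choosing $a$ just above $1/\min(p,2)$ and then $r\in\N$ just above $a + 1/2$ produces precisely the hypothesis $r > 1/\min(p,2) + 1/2$.

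The main obstacle is the vector-valued maximal inequality with independently anisotropically dilated spectral supports $\Omega_\ell$: the Fefferman--Stein argument must be iterated coordinate-wise while preserving full scaling invariance in the parameters $b_i^\ell$ and without destroying the $\ell_2$-structure. The Cauchy--Schwarz/Plancherel step normalizing the scaling, by contrast, is routine once the cutoff $\psi$ has been inserted.
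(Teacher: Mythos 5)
The paper does not prove Lemma \ref{mul1} at all --- it is quoted from Hansen's thesis (Prop.\ 2.3.5) and Schmeisser--Triebel (Thm.\ 1.10.3) --- and your argument is precisely the standard proof given in those references: write $\cfi[M_\ell\cf f_\ell]$ as a convolution with $\cfi[M_\ell\psi_\ell]$, bound it pointwise by $P_{b^\ell,a}f_\ell$ times a weighted $L_1$-norm of the kernel which becomes scaling-invariant after the substitution $z=b^\ell\diamond y$, control that norm via Cauchy--Schwarz and Plancherel by $\|M_\ell(b^\ell\diamond\cdot)\,|\,S^r_2W(\R)\|$ under $r>a+1/2$, and conclude with the vector-valued Peetre/Fefferman--Stein maximal inequality for $a>1/\min(p,2)$. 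The bookkeeping is correct, including the cancellation of the Jacobian, the identification of the weighted $L_2$-norm with the $S^r_2W$-norm via Lemma \ref{equi}, and the provenance of the hypothesis $r>1/\min(p,2)+1/2$.
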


%&&&&&&&&&&&&&&&&&&&&&&&&&&&&&&&&&&&&&&&&&&&&&&&&&&&&&&&&&&&&&&&&&&&&&&&&&&&&&&&&&&&
%&&&&&&&&&&&&&&&&&&&&&&&&&&&&&&&&&&&&&&&&&&&&&&&&&&&&&&&&&&&&&&&&&&&&&&&&&&&&&&&&&&&

\section{Main results}\label{main}

%&&&&&&&&&&&&&&&&&&&&&&&&&&&&&&&&&&&&&&&&&&&&&&&&&&&&&&&&&&&&&&&&&&&&&&&&&&&&&&&&&&&
%&&&&&&&&&&&&&&&&&&&&&&&&&&&&&&&&&&&&&&&&&&&&&&&&&&&&&&&&&&&&&&&&&&&&&&&&&&&&&&&&&&&

For a Banach space $X$ of functions  we shall call a function $f$ a pointwise multiplier
if $f \, \cdot \, g \in X$ for all $g \in X$
(this is includes, of course, that the operation $g \mapsto f \, \cdot \, g$ must be well defined for all $g\in X$).
If $X \hookrightarrow L_p (\Omega)$ for some $p$ (here $\Omega$ is a domain in $\R$),  
as a consequence of the Closed Graph Theorem, we obtain that the liner operator 
$T_f : ~ g \mapsto f \, \cdot \, g$, associated to such a pointwise multiplier, must be continuous in $X$,
see \cite[p.~33]{MS2}.
We shall call $X$ an algebra with respect to pointwise multiplication 
(for short a multiplication algebra) if $f\, \cdot \, g \in X$ for all $f,g\in X$.
In addition we put
\[
M(X):= \Big\{f:~ f\, \cdot \, g \in X \qquad \forall g\in X\Big\}
\]
and equip this set with the norm of the operator $T_f$, i.e.,
\[
\|\, f\, |M(X)\|:= \| \, T_f : ~ X \to X\| = \sup_{\|g|X\|\le 1}\, \| \, f \cdot g \, |X\|\, .
\]

%&&&&&&&&&&&&&&&&&&&&&&&&&&&&&&&&&&&&&&&&&&&&&&&&&&&&&&&&&&&&&&&&&&&&&&&&&&&&&&&&&&&
%&&&&&&&&&&&&&&&&&&&&&&&&&&&&&&&&&&&&&&&&&&&&&&&&&&&&&&&&&&&&&&&&&&&&&&&&&&&&&&&&&&&

\subsection{Pointwise multipliers for Sobolev spaces}\label{main1}

%&&&&&&&&&&&&&&&&&&&&&&&&&&&&&&&&&&&&&&&&&&&&&&&&&&&&&&&&&&&&&&&&&&&&&&&&&&&&&&&&&&&
%&&&&&&&&&&&&&&&&&&&&&&&&&&&&&&&&&&&&&&&&&&&&&&&&&&&&&&&&&&&&&&&&&&&&&&&&&&&&&&&&&&&

One of our main results is as follows.

\begin{theorem}\label{main-so}
Let $m\in \N$ and $1<p<\infty$. Then $S^m_pW(\R)$ is a multiplication algebra.
\end{theorem}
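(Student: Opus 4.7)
By Lemma~\ref{equi}, the desired product estimate
\[
\|\, fg\, |S^m_pW(\R)\| \le c\, \|\, f\, |S^m_pW(\R)\|\, \|\, g\, |S^m_pW(\R)\|
\]
reduces to bounding $\|\, D^\alpha(fg)\, |L_p(\R)\|$ for each $\alpha \in \{0,m\}^d$. Setting $e := \{i \in [d] : \alpha_i = m\}$ and applying the Leibniz rule, one has
\[
D^\alpha(fg) \;=\; \sum_{\gamma + \delta = \alpha,\ \gamma,\delta \in \N_0^d(e)} \binom{\alpha}{\gamma}\, D^\gamma f \cdot D^\delta g,
\]
so it is enough to control each Leibniz term by $c\,\|\, f\, |S^m_pW(\R)\|\,\|\, g\, |S^m_pW(\R)\|$ with $c$ independent of $(\gamma,\delta)$. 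The mechanism will be a mixed $L_p$--$L_\infty$ bound that distributes the regularity coordinatewise: an $L_p$-norm is placed on each factor in the directions where it carries the top-order derivative, and a supremum in all the remaining directions.

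\textbf{Per-summand estimate.} Given $(\gamma,\delta)$, split $e$ into the disjoint blocks
\[
e_f := \{i \in e : \gamma_i = m\},\qquad e_g := \{i \in e : \delta_i = m\},\qquad e_* := \{i \in e : 0 < \gamma_i < m\},
\]
and write $e^c := [d]\setminus e$, so that $\gamma_i = \delta_i = 0$ on $e^c$. Define the partial suprema
\[
F(x_{e_f \cup e_* \cup e^c}) := \sup_{x_{e_g}}|D^\gamma f(x)|,\qquad G(x_{e_g \cup e_* \cup e^c}) := \sup_{x_{e_f}}|D^\delta g(x)|,
\]
which are legitimate because $D^\gamma f$ has no derivative in the $e_g$-directions, $D^\delta g$ has none in the $e_f$-directions, and Lemma~\ref{emb} provides the necessary continuity. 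From the pointwise bound $|D^\gamma f \cdot D^\delta g| \le F\cdot G$, Fubini (since $F$ is independent of $x_{e_g}$ and vice versa), H\"older in the remaining variables $(x_{e_*}, x_{e^c})$ with exponents $(\infty,p)$, and the elementary inequality $\sup_{y}\|h(\cdot,y)\,|L_p\| \le \|\sup_{y}|h(\cdot,y)|\, |L_p\|$, one arrives at
\[
\|\, D^\gamma f\cdot D^\delta g\, |L_p(\R)\| \le \Big\|\sup_{x_{e_g\cup e_*\cup e^c}}|D^\gamma f|\,\Big|\,L_p(\re^{|e_f|})\Big\|\cdot \Big\|\sup_{x_{e_f}}|D^\delta g|\,\Big|\,L_p(\re^{d-|e_f|})\Big\|.
\]

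\textbf{The main obstacle: the improved trace inequality.} The two factors on the right are ``mixed traces'' of the type announced in the introduction. For the factor involving $g$, the supremum is taken exactly over the coordinates on which $\delta_i = 0$, so it already matches the stated form of the trace inequality and is dominated by $c\,\|\, g\, |S^m_pW(\R)\|$. The factor involving $f$ is the delicate one, because the supremum ranges not only over $e_g\cup e^c$ (where $\gamma_i = 0$) but also over $e_*$, where $0 < \gamma_i < m$. In each such direction $D^\gamma f$ still carries at least one unused partial $\partial_{x_i}$ (since $m - \gamma_i \ge 1 > 1/p$), so the one-dimensional Sobolev embedding $W^k_p(\re) \hookrightarrow C(\re)$ can be iterated coordinatewise across $e_*$ to reduce the estimate to the base trace inequality announced in the introduction. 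Establishing these improved trace inequalities with the correct dependence on the dominating-mixed-smoothness norm is the substantive step; once they are in place, summing over the finite set of $\alpha \in \{0,m\}^d$ and the finitely many Leibniz pairs $(\gamma,\delta)$ yields the algebra inequality.
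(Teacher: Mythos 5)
Your overall strategy coincides with the paper's: reduce via Lemma~\ref{equi} to the extreme derivatives $\alpha\in\{0,m\}^d$, apply the Leibniz rule, and estimate each term $D^\gamma f\cdot D^\delta g$ by a mixed H\"older splitting that places an $L_p$-norm on each factor in the directions where it carries the full order $m$ and a supremum in the remaining directions. The per-summand inequality you derive is essentially the one the paper uses (the paper assigns the block $e_*$ to the $L_p$-side of the $f$-factor rather than to its sup-side, but both versions are instances of the same mixed trace estimate), and your uniform block decomposition correctly subsumes the paper's separate treatment of the easy case $|\gamma|_\infty<m$ via $S^{m-|\gamma|_\infty}_pW(\R)\hookrightarrow C(\R)$.

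The gap is that the two mixed trace norms on the right-hand side are never actually estimated. The inequality
\[
\Big(\int_{\re^{N}}\sup_{x_{N+1},\dots,x_d}|D^{\beta}f(x)|^{p}\,\prod_{j=1}^{N}dx_j\Big)^{1/p}\le C\,\|f\,|S^m_pW(\R)\|
\]
is not a quotable fact: the introduction merely announces that it ``will be shown below'', and in the paper it is Lemma~\ref{hilfe}, whose proof constitutes most of the work for this theorem and runs through the nonsmooth Littlewood--Paley decomposition, the Nikol'skij inequality (Proposition~\ref{Nikolski}) and the vector-valued Fourier multiplier theorem (Lemma~\ref{mul1}). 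Your proposal explicitly defers this ``substantive step'', so as written it is incomplete. That said, the reduction device you introduce for the $e_*$-directions already contains everything needed to close the gap: apply the one-dimensional embedding $\|h\,|L_\infty(\re)\|\lesssim \|h\,|L_p(\re)\|+\|h'\,|L_p(\re)\|$ in \emph{every} sup-direction $i$ (including those with $\gamma_i=0$, not only $e_*$), freezing the $L_p$-variables and using Minkowski's integral inequality to pull the resulting one-dimensional $L_p$-norms outside the remaining suprema. This bounds the mixed trace norm by $\sum_{\epsilon}\|D^{\gamma+\epsilon}f\,|L_p(\R)\|$, where $\epsilon$ runs over the $\{0,1\}$-vectors supported on the sup-directions; since $\gamma_i\le m-1$ there, every $\gamma+\epsilon$ satisfies $|\gamma+\epsilon|_\infty\le m$, so the sum is $\lesssim\|f\,|S^m_pW(\R)\|^{(d)}$. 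Carrying this out on the dense subclass of smooth $f$ (as the paper also does before applying Leibniz) would complete your argument by a route considerably more elementary than the paper's Fourier-analytic proof of Lemma~\ref{hilfe}.
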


\begin{remark}\rm
 For the isotropic case we refer to Moser \cite{Mo}, Strichartz \cite{Str-67} and the comprehensive monographs \cite{MS1}, \cite{MS2} of 
Maz'ya and Shaposnikova.
\end{remark}

One way to extend this result to $p=\infty$ is given by considering $C^m_\mix (\R)$ instead of $S^m_\infty W(\R)$.

\begin{definition}\label{def-co} Let $m\in \N$. Then  $C^m_\mix (\R)$ is the collection of all continuous functions 
$f:~\R\to \C $
such that all  derivatives $D^\alpha f$ with $\max_{j=1, \ldots \, d} \, \alpha_j\le m$ are continuous as well 
and
\beqq
\|\, f\, |C^m_\mix(\R)\| := \sum_{|\alpha|_{\infty} \leq m} \, \sup_{x \in \R}\, |\, D^{\alpha}f(x)\, |< \infty \, .
\eeqq 
\end{definition}

In this case the following result is almost trivial.

\begin{theorem}\label{cm}
Let $m\in \N$. Then $C^m_{\mix}(\R)$ is a multiplication algebra.
\end{theorem}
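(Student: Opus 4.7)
The plan is to carry out the proof by direct application of the multivariate Leibniz rule, exploiting the fact that the index set $\{\alpha \in \N_0^d : |\alpha|_\infty \le m\}$ is closed under componentwise decomposition. Given $f,g \in C^m_{\mix}(\R)$ and a multiindex $\alpha$ with $|\alpha|_\infty \le m$, all classical derivatives $D^\beta f$ and $D^\gamma g$ for $\beta,\gamma \le \alpha$ exist and are continuous. Standard iterated application of the one-variable product rule then yields
\[
D^\alpha(fg)(x) = \sum_{\beta \le \alpha} \binom{\alpha}{\beta} D^\beta f(x)\, D^{\alpha-\beta}g(x), \qquad x \in \R,
\]
where $\binom{\alpha}{\beta} = \prod_{j=1}^d \binom{\alpha_j}{\beta_j}$, and this expression is continuous as a finite sum of products of continuous functions.

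The decisive observation is that if $0 \le \beta \le \alpha$ componentwise and $|\alpha|_\infty \le m$, then both $|\beta|_\infty \le m$ and $|\alpha-\beta|_\infty \le m$ automatically hold. Consequently every factor appearing on the right-hand side is one of the finitely many derivatives whose sup norm is controlled by $\|f\,|C^m_{\mix}(\R)\|$ or $\|g\,|C^m_{\mix}(\R)\|$. Taking the supremum in $x$ and the triangle inequality give
\[
\sup_{x \in \R} |D^\alpha(fg)(x)| \le \sum_{\beta \le \alpha} \binom{\alpha}{\beta}\, \|D^\beta f\,|L_\infty(\R)\|\, \|D^{\alpha-\beta}g\,|L_\infty(\R)\|.
\]

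Summing this estimate over the finite set $\{\alpha : |\alpha|_\infty \le m\}$, bounding each binomial coefficient by $2^{md}$, and estimating the resulting double sum by the product of two single sums over multiindices with $|\cdot|_\infty \le m$, one obtains
\[
\|\,fg\,|C^m_{\mix}(\R)\| \le c(m,d)\, \|\,f\,|C^m_{\mix}(\R)\|\, \|\,g\,|C^m_{\mix}(\R)\|
\]
with a constant $c(m,d)$ depending only on $m$ and $d$. I expect no genuine obstacle: the content of the theorem is precisely that the condition $|\alpha|_\infty \le m$, unlike conditions formulated via the total order $|\alpha|_1$, behaves just as cleanly as in the univariate case because the splitting $\alpha = \beta + (\alpha-\beta)$ preserves the $\ell_\infty$-bound termwise.
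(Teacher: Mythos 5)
Your proof is correct and is exactly the argument the paper has in mind: the paper omits a written proof, calling the result ``almost trivial'', and the intended reasoning is precisely the Leibniz rule together with the observation that $\beta\le\alpha$ and $|\alpha|_\infty\le m$ force $|\beta|_\infty\le m$ and $|\alpha-\beta|_\infty\le m$. Nothing to add.
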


\noindent
We continue with a comment to Moser-type inequalities. 
Let $d=2$. Then, for $\alpha = (m,m)$ we obtain
\be\label{ws-014}
D^\alpha (f\, \cdot \, g) (x,y) = \sum_{j=0}^m \binom{m}{j} \sum_{\ell =0}^m \binom{m}{\ell}\, 
\frac{\partial^{j + \ell}f}{\partial x^\ell \, \partial y^j } (x,y)\,
\frac{\partial^{2m - (j + \ell)}g}{\partial x^{m-\ell} \, \partial y^{m-j} } (x,y) \, .
\ee
By choosing $j=0$ and $\ell=m$ we see that the term
$\frac{\partial^{m}f}{\partial x^m} \, \frac{\partial^{m}g}{\partial y^m } $ occurs in the previous sum.
Hence, Gagliardo-Nirenberg-type  inequalities  can not be applied as it is done in the isotropic case.
This is the main reason why we can not expect Moser-type inequalities for the dominating mixed case.

\begin{theorem}\label{negativea}
Let $d>1$ and $m\in \N$. \\
{\rm (i)} Then there exists no constant $C>0$ such that
\beqq
\|\, f\, \cdot \, g\, | C^m_\mix (\R)\| \leq C\, \big(\|\, f\, | C^m_\mix (\R)\|\cdot  
\|\, g\, |L_{\infty}(\R)\| + \|\, f\, |L_{\infty}(\R)\|\cdot \|g| C^m_\mix (\R)\|\big)
\eeqq
holds for all $f,g\in  C^m_\mix (\R)$.\\
{\rm (ii)} Let $1< p< \infty$. There exists no constant $C>0$ such that
\beqq
\|fg| S^m_pW(\R)\| \leq C\big(\|f| S^m_pW(\R)\|\cdot \|g|L_{\infty}(\R)\| + \|f|L_{\infty}(\R)\|\cdot\|g| S^m_pW(\R)\|\big)
\eeqq
holds for all $f,g\in  S^m_pW(\R)$.\\
\end{theorem}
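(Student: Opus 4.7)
The strategy is to exhibit an explicit one-parameter family of test pairs $(f_\lambda,g_\lambda)$ for which the desired inequality fails as $\lambda\to\infty$. The heuristic from~\eqref{ws-014} is that $D^{(m,m,0,\dots,0)}(fg)$ contains the term $(\partial_{x_1}^m f)(\partial_{x_2}^m g)$, which forces the product to ``pay'' two full derivatives, while the right-hand side of the Moser-type inequality only pays for one. Choosing $f$ concentrated in $x_1$-derivatives and $g$ concentrated in $x_2$-derivatives therefore makes the left-hand side win as the derivatives are scaled up. Without loss of generality one may take $d=2$, since for $d>2$ the same construction is tensorised with a fixed smooth cutoff in the remaining variables.

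\emph{Part (i).} Fix a nontrivial $\phi\in C_0^\infty(\re)$, set $\phi_\lambda(t):=\phi(\lambda t)$, and put
\[
f_\lambda(x_1,x_2):=\phi_\lambda(x_1),\qquad g_\lambda(x_1,x_2):=\phi_\lambda(x_2).
\]
Since $f_\lambda$ depends only on $x_1$, its only nonzero derivatives are $\partial_{x_1}^j f_\lambda=\lambda^j\phi^{(j)}(\lambda x_1)$, so
\[
\|f_\lambda\, |\, C^m_\mix(\R)\|=\sum_{j=0}^m\lambda^j\|\phi^{(j)}\|_\infty\sim\lambda^m,\qquad \|f_\lambda\, |\, L_\infty(\R)\|=\|\phi\|_\infty,
\]
with the analogous estimates for $g_\lambda$. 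On the other hand
\[
D^{(m,m,0,\dots,0)}(f_\lambda g_\lambda)=\lambda^{2m}\phi^{(m)}(\lambda x_1)\phi^{(m)}(\lambda x_2),
\]
so $\|f_\lambda g_\lambda\, |\, C^m_\mix(\R)\|\gtrsim \lambda^{2m}\|\phi^{(m)}\|_\infty^2$. The right-hand side of the claimed inequality is of order $\lambda^m$ while the left-hand side grows like $\lambda^{2m}$; hence no admissible constant $C$ can exist.

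\emph{Part (ii).} Here we need $L_p(\R)$-integrability, so we tensorise with a fixed localising factor. Fix $\phi,\eta\in C_0^\infty(\re)$ with $\eta\equiv 1$ on the support of $\phi$ (and hence on $\operatorname{supp}\phi^{(m)}$) and put
\[
f_\lambda(x):=\phi_\lambda(x_1)\,\eta(x_2),\qquad g_\lambda(x):=\eta(x_1)\,\phi_\lambda(x_2),
\]
so that $f_\lambda g_\lambda=\phi_\lambda(x_1)\eta(x_1)\eta(x_2)\phi_\lambda(x_2)=\phi_\lambda(x_1)\phi_\lambda(x_2)$ by the support assumption. A change of variables gives
\[
\|\partial_{x_1}^j\partial_{x_2}^k f_\lambda\, |\, L_p(\R)\|=\lambda^{j-1/p}\,\|\phi^{(j)}\|_p\,\|\eta^{(k)}\|_p,
\]
and applying Lemma~\ref{equi}, we obtain $\|f_\lambda\, |\, S^m_pW(\R)\|\sim\lambda^{m-1/p}$ and $\|f_\lambda\, |\, L_\infty(\R)\|\lesssim 1$, with identical bounds for $g_\lambda$. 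For the product,
\[
D^{(m,m)}(f_\lambda g_\lambda)=\lambda^{2m}\phi^{(m)}(\lambda x_1)\phi^{(m)}(\lambda x_2),
\]
whose $L_p(\re^2)$-norm equals $\lambda^{2m-2/p}\|\phi^{(m)}\|_p^2$. Therefore $\|f_\lambda g_\lambda\, |\, S^m_pW(\R)\|\gtrsim \lambda^{2(m-1/p)}$, whereas the right-hand side of the Moser-type inequality is of order $\lambda^{m-1/p}$. Since $p>1$ forces $m-1/p>0$, the ratio of left- to right-hand side tends to infinity, and the asserted inequality cannot hold.

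\emph{Main obstacle.} The construction itself is elementary; the only genuine point of care is ensuring that no cancellation can wipe out the leading term. In Part~(i) this is automatic because $f_\lambda$ and $g_\lambda$ depend on disjoint variables. In Part~(ii) the Leibniz expansion of $\partial_{x_i}^m(\phi_\lambda\eta)$ produces terms of lower order in $\lambda$; the cutoff condition $\eta\equiv 1$ on $\operatorname{supp}\phi$ eliminates this bookkeeping by making $f_\lambda g_\lambda$ a clean tensor product, so that the $\lambda^{2m-2/p}$ contribution is manifest and isolated.
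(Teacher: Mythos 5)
Your proposal is correct and is essentially the paper's own argument: the paper proves Theorem~\ref{negativea} by invoking the test functions from Step~1 of the proof of Theorem~\ref{negative}, namely $F_n(x)=f(2^nx_1)\prod_{j\ge 2}g(x_j)$ and $G_n(x)=g(x_1)f(2^nx_2)\prod_{j\ge 3}g(x_j)$ with $g\equiv 1$ near $\supp f$, which coincide with your $(f_\lambda,g_\lambda)$ up to writing $\lambda=2^n$. The homogeneity count $\lambda^{2(m-1/p)}$ versus $\lambda^{m-1/p}$ (respectively $\lambda^{2m}$ versus $\lambda^{m}$ in the $C^m_{\mix}$ case) is exactly the intended mechanism, and your observation that $m-1/p>0$ always holds here is precisely why the paper can reuse the Step~1 construction without the extra work needed for the Besov case with small $r$.
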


Based on Theorems \ref{main-so}, \ref{cm} it is quite easy to get a characterization of 
$M(S^m_pW (\R))$ and $M(C^m_\mix (\R))$.

Let $\psi$ be a non-negative $C_0^{\infty}(\R)$ function. We put $
\psi_{\mu}(x)=\psi(x-\mu)$, $\mu\in \Z,\ x\in \R
$
and assume that
\be\label{ws-10}
\sum_{\mu\in \Z} \psi_{\mu}(x)=1\qquad  \text{ for all}\ x\in \R\,.
\ee

\begin{definition}\label{def-unif}  
Let the Banach space $X$ be continuously embedded into $ L_1^{\ell oc}(\R)$.
\\
{\rm (i)}  $X^{\ell oc} $ is the collection of all $g \in L_1^{\ell oc}(\R)$ such that 
$\varphi \, \cdot \, g \in X$ for all test functions $\varphi \in C_0^\infty (\R)$.
\\
{\rm (ii)} 
Let  $\psi$ be as in \eqref{ws-10}. 
Then $X_{\unif}$ is the collection of all $f\in X^{\ell oc}$ such that
\beqq
\|\, f\, | X_{\unif}\|_{\psi} = \sup_{\mu\in \Z} \, \|\, \psi_{\mu}\, \cdot \, f\, |  X\|<\infty.
\eeqq
\end{definition}

\begin{remark}\rm 
The spaces $ S^m_p W(\R)_{\unif}$ and $ S^r_{p,p} B(\R)_{\unif}$ are independent of the special choice of $\psi$ 
(in the sense of equivalent norms). These are consequences of Theorem \ref{cm} and Theorem \ref{main-be} respectively. 
\end{remark}

Now we are in position to formulate the main result of our paper.

\begin{theorem}\label{mul-space}
Let $d>1$ and  $m\in \N$.
\\
{\rm (i)} Let $1<p<\infty$. We have
\beqq
M(S^m_pW(\R))=S^m_{p}W(\R)_{\unif}
\eeqq 
in the sense of equivalent norms.
\\
{\rm (ii)}
We have
\beqq
M(C^m_\mix(\R))=C^m_{\mix}(\R)
\eeqq 
in the sense of equivalent norms.
\end{theorem}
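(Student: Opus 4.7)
The plan is to reduce both statements to the algebra properties already established (Theorem \ref{main-so} for $S^m_pW$ and Theorem \ref{cm} for $C^m_{\mix}$) by a standard localization via the partition of unity $\{\psi_\mu\}_{\mu\in\Z}$ from \eqref{ws-10}. Part (ii) is the easy half: Theorem \ref{cm} immediately gives $C^m_{\mix}(\R)\hookrightarrow M(C^m_{\mix}(\R))$ with $\|T_f\|\le C\|f|C^m_{\mix}(\R)\|$, while for the converse it suffices to observe that the constant function $\mathbf{1}$ lies in $C^m_{\mix}(\R)$ with norm $1$, so $f=T_f(\mathbf{1})$ gives $\|f|C^m_{\mix}(\R)\|\le\|T_f\|=\|f|M(C^m_{\mix}(\R))\|$. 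So all the real work is in (i).

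For the inclusion $S^m_pW(\R)_{\unif}\hookrightarrow M(S^m_pW(\R))$, I would fix an auxiliary $\tilde\psi\in C_0^\infty(\R)$ with $\tilde\psi\equiv 1$ on $\supp\psi$ and set $\tilde\psi_\mu(x):=\tilde\psi(x-\mu)$, so that $\tilde\psi_\mu\equiv 1$ on $\supp\psi_\mu$ and the family $\{\tilde\psi_\mu\}$ still has bounded overlap. Writing
\[
f\cdot g \;=\; \sum_{\mu\in\Z}(\psi_\mu f)(\tilde\psi_\mu g),
\]
bounded overlap combined with convexity yields, for each $\alpha$ with $|\alpha|_\infty\le m$,
\[
\|D^\alpha(f\cdot g)\,|L_p(\R)\|^p \;\le\; C\sum_{\mu\in\Z}\bigl\|D^\alpha\bigl[(\psi_\mu f)(\tilde\psi_\mu g)\bigr]\,|L_p(\R)\bigr\|^p,
\]
and hence $\|f\cdot g|S^m_pW(\R)\|^{*\,p}\le C\sum_\mu\|(\psi_\mu f)(\tilde\psi_\mu g)|S^m_pW(\R)\|^{*\,p}$ using the equivalent norm from Lemma \ref{equi}. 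Applying Theorem \ref{main-so} factorwise gives
\[
\|(\psi_\mu f)(\tilde\psi_\mu g)|S^m_pW(\R)\| \le C\|\psi_\mu f|S^m_pW(\R)\|\cdot\|\tilde\psi_\mu g|S^m_pW(\R)\|,
\]
the first factor being bounded by $\|f|S^m_pW(\R)_{\unif}\|_\psi$ uniformly in $\mu$. It remains to sum the second factor: the Leibniz rule applied to $\tilde\psi_\mu g$, the uniform boundedness of $D^\beta\tilde\psi_\mu$, and the bounded overlap of $\{\supp\tilde\psi_\mu\}$ give $\sum_\mu\|\tilde\psi_\mu g|S^m_pW(\R)\|^{*\,p}\le C\|g|S^m_pW(\R)\|^{*\,p}$. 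Combining everything produces the required inequality $\|f\cdot g|S^m_pW(\R)\|\le C\|f|S^m_pW(\R)_{\unif}\|_\psi\,\|g|S^m_pW(\R)\|$.

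The reverse inclusion $M(S^m_pW(\R))\subset S^m_pW(\R)_{\unif}$ is the easier direction: by Lemma \ref{emb}, $S^m_pW(\R)\hookrightarrow L_1^{\ell oc}(\R)$, so the closed graph argument applies and each multiplier induces a bounded operator $T_f$. Since $\psi_\mu\in C_0^\infty(\R)\subset S^m_pW(\R)$ with translation-invariant norm $\|\psi_\mu|S^m_pW(\R)\|=\|\psi|S^m_pW(\R)\|$, we get
\[
\sup_\mu\|\psi_\mu f|S^m_pW(\R)\| \le \|T_f\|\cdot\|\psi|S^m_pW(\R)\| = C(\psi)\,\|f|M(S^m_pW(\R))\|,
\]
which is exactly the $S^m_pW(\R)_{\unif}$-norm. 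The main obstacle is the localization bookkeeping in the first direction, in particular the two uses of bounded overlap (once at the level of supports of $\psi_\mu$, once at the level of supports of $\tilde\psi_\mu$) and the careful application of Leibniz to transfer derivatives between $\tilde\psi_\mu$ and $g$; everything else is a direct appeal to Theorem \ref{main-so} and to the translation invariance of the norm $\|\cdot|S^m_pW(\R)\|$.
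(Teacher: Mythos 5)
Your proposal is correct and follows essentially the same route as the paper: localization via $\{\psi_\mu\}$ and an enlarged cutoff $\tilde\psi_\mu\equiv 1$ on $\supp\psi_\mu$, factorwise application of the algebra property (Theorem \ref{main-so}), bounded overlap to resum, and the test functions $\psi_\mu$ (resp.\ the constant $\mathbf{1}$) for the reverse inclusions. The only cosmetic difference is that you unpack the two directions of the localization equivalence $\|g|S^m_pW(\R)\|\asymp\bigl(\sum_\mu\|\psi_\mu g|S^m_pW(\R)\|^p\bigr)^{1/p}$ inline (via convexity and Leibniz), whereas the paper invokes it as a known property.
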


\begin{remark}\rm
 For the classical case of isotropic Sobolev spaces  we refer again to Strichartz \cite{Str-67} and the  monographs \cite{MS1}, \cite{MS2} of 
Maz'ya and Shaposnikova.
\end{remark}

%&&&&&&&&&&&&&&&&&&&&&&&&&&&&&&&&&&&&&&&&&&&&&&&&&&&&&&&&&&&&&&&&&&&&&&&&&&&&&&&&&&&
%&&&&&&&&&&&&&&&&&&&&&&&&&&&&&&&&&&&&&&&&&&&&&&&&&&&&&&&&&&&&&&&&&&&&&&&&&&&&&&&&&&&

\subsection{Pointwise multipliers for Besov spaces}\label{main2}

%&&&&&&&&&&&&&&&&&&&&&&&&&&&&&&&&&&&&&&&&&&&&&&&&&&&&&&&&&&&&&&&&&&&&&&&&&&&&&&&&&&&
%&&&&&&&&&&&&&&&&&&&&&&&&&&&&&&&&&&&&&&&&&&&&&&&&&&&&&&&&&&&&&&&&&&&&&&&&&&&&&&&&&&&

The main  result  with respect to   Besov spaces of dominating mixed smoothness reads as follows.

\begin{theorem}\label{main-be}
Let $1\leq p\leq \infty$ and $r>0$. Then $S^r_{p,p}B(\R)$ is a multiplication algebra if and only if either $1 <p\le \infty$ and  
$r>1/p$ or $p=1$ and $r \ge 1$.
\end{theorem}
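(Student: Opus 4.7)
The plan is to identify the conditions for the algebra property with the continuous embedding $S^r_{p,p}B(\R) \hookrightarrow L_\infty(\R)$ provided by Lemma \ref{emb1}. Necessity follows from an elementary iteration argument; sufficiency uses Strichartz's method, i.e., the difference characterization (Lemma \ref{diff1}) together with a multivariate Leibniz formula.

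For necessity, suppose $S^r_{p,p}B(\R)$ is a multiplication algebra with constant $C$. Iterating yields $\|f^n\,|\,S^r_{p,p}B(\R)\| \le C^{n-1}\|f\,|\,S^r_{p,p}B(\R)\|^n$ for all $n\in\N$. Combining with the trivial embedding $S^r_{p,p}B(\R)\hookrightarrow L_p(\R)$ gives
$$\|f\|_{L_{np}(\R)} = \|f^n\|_{L_p(\R)}^{1/n} \le (c\, C^{n-1})^{1/n}\,\|f\,|\,S^r_{p,p}B(\R)\|,$$
which is uniformly bounded in $n$. If $f\notin L_\infty$, then $|\{|f|>M\}|>0$ for every $M$, and since $f\in L_{np}$ for every $n$ this measure is finite. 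Hence $\|f\|_{L_q}\ge M\,|\{|f|>M\}|^{1/q}\to M$ as $q\to\infty$, contradicting the uniform bound. Therefore $S^r_{p,p}B(\R)\hookrightarrow L_\infty(\R)$, and by Lemma \ref{emb1} this forces the stated condition on $(p,r)$.

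For sufficiency, assume the stated conditions, so $S^r_{p,p}B(\R)\hookrightarrow L_\infty(\R)$. Fix an integer $m>r$ and work with the equivalent norm based on $\omega_{\overline m}^e$ from Lemma \ref{diff1}. For each $e\subset[d]$ and $h\in\R$, iterating the one-dimensional Leibniz rule for differences over $i\in e$ produces
$$\Delta_h^{m,e}(fg)(x) = \sum_{j\in\{0,\ldots,m\}^e}\binom{m}{j}\,(\Delta_h^{j,e}f)(x+y_j)\,(\Delta_h^{m-j,e}g)(x),$$
with appropriate shifts $y_j$ depending on $j$ and $h$. The two extreme multi-indices $j=\overline 0$ and $j=\overline m$ produce $f\cdot\Delta_h^{m,e}g$ and $\Delta_h^{m,e}f\cdot g$; taking $L_p$-norms and summing over $k\in\N_0^d(e)$ with weight $2^{rp|k|_1}$, these give the Moser-type contributions $\|f\|_{L_\infty}\,\|g\,|\,S^r_{p,p}B(\R)\|+\|g\|_{L_\infty}\,\|f\,|\,S^r_{p,p}B(\R)\|$, bounded via the $L_\infty$-embedding. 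For each mixed $j$, introduce $e_f=\{i\in e:j_i\ge 1\}$ and $e_g=\{i\in e:j_i\le m-1\}$: in directions $e\setminus e_f$ the $f$-factor is undifferenced and is absorbed by $\|f\|_{L_\infty}$, symmetrically for $g$ in $e\setminus e_g$; in the directions $e_f\cap e_g$ both factors carry sub-maximal differences and one distributes the weight $2^{rp k_i}$ between them by Hölder on the sequence level, invoking the $L_\infty$-embedding once more to control the partial sums. Summing over $j$ and over $e\subset[d]$ yields \eqref{ws-013}.

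The main obstacle is the bookkeeping for the mixed terms, in which for some directions both $f$ and $g$ contribute non-trivial but sub-maximal differences; the weight $2^{rp|k|_1}$ must be split between the factors via iterated Hölder on the sequence side, and convergence of the resulting sums is precisely the quantitative form of the embedding $S^r_{p,p}B(\R)\hookrightarrow L_\infty(\R)$, i.e., the very hypothesis of the theorem. Since the cleaner paraproduct approach does not adapt well to dominating mixed smoothness, as noted in the introduction, this direct difference-based decomposition is the natural route.
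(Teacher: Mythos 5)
Your necessity argument is a legitimate alternative to the paper's. The paper instead transfers known one-dimensional counterexamples (non-algebra cases of $B^r_{p,p}(\re)$) to $\R$ via the cross-norm property; your iteration $\|f^n\|\le C^{n-1}\|f\|^n$ combined with $X\hookrightarrow L_p$ is a clean abstract route, with only the minor caveat that Lemma \ref{emb1} is stated for the embedding into $C(\R)$ rather than $L_\infty(\R)$ (for $p<\infty$ these are equivalent by density of smooth functions, and the failure of the embedding in the critical cases is witnessed by unbounded functions, so this is repairable).

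The sufficiency direction, however, has a genuine gap exactly where the paper invests all of its effort. Your one-sentence treatment of the mixed terms --- ``distribute the weight $2^{rpk_i}$ between the factors by H\"older on the sequence level, invoking the $L_\infty$-embedding'' --- does not identify a workable mechanism. The real obstruction is this: in the genuinely mixed terms the set of coordinates in which $f$ carries a high-order difference and the set in which $g$ does are complementary, so the natural estimate requires controlling a quantity of the form
\[
\Big(\int_{\re^{d-L}}\sup_{x_1,\dots,x_L}\big|\Delta_h^{\cdot,e}f(x)\big|^p\,dx_{L+1}\cdots dx_d\Big)^{1/p},
\]
i.e.\ an $L_p$-norm in some variables of a supremum in the others. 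This mixed sup/$L_p$ quantity is \emph{not} controlled by the plain embedding $S^r_{p,p}B(\R)\hookrightarrow L_\infty(\R)$; the paper obtains it only for Littlewood--Paley blocks $f_{k+\ell}$, via the Nikol'skij inequality (Proposition \ref{Nikolski}, costing factors $2^{(k_i+\ell_i)/p}$), Lemma \ref{ddim-1} together with the Peetre maximal function (Proposition \ref{peetremax}), and then a delicate double summation over $\ell,\nu\in\zz^d$ with the injective change of variables $n=k+\ell$, $j=k+\nu$. Two further points your sketch misses: the paper works with differences of order $2m$ rather than $m$ precisely so that in every coordinate at least one factor carries a difference of order $\ge m>r$ (with order-$m$ differences your mixed terms would additionally require Marchaud-type splittings of smoothness within a single coordinate); and the borderline case $p=1$, $r=1$, where every ``distribute the weight with a strict surplus'' argument degenerates ($\varepsilon=\min(r,r-1/p)=0$), needs the separate treatment of the paper's Substep 3.2. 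As it stands, the core of the sufficiency proof is missing.
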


\begin{remark}\rm
There is a rather long list of references concerning the isotropic case. Let us refer to 
Peetre \cite{Pe}, Triebel \cite{Tr78}, Maz'ya, Shaposnikova \cite{MS1}, \cite{MS2} and 
Runst, S. \cite{RS} to mention at least a few. 
\end{remark}

Based on Theorem \ref{main-be} it is now quite easy to prove the following.

\begin{theorem}\label{mul-spaceb}
 Let  either  $1 <p\leq \infty$ and $r>1/p$ or $p=1$ and $r\ge 1$. Then
\beqq
M(S^r_{p,p}B(\R))=S^r_{p,p}B(\R)_{\unif}
\eeqq
holds in the sense of equivalent norms.
\end{theorem}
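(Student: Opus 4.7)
The plan is to prove the two set-theoretic inclusions, using Theorem \ref{main-be} (algebra property) as a black box together with a suitable localization identity. The easy direction $M(S^r_{p,p}B(\R)) \subseteq S^r_{p,p}B(\R)_{\unif}$ is as follows: if $f$ is a pointwise multiplier, then $T_f$ is bounded on $S^r_{p,p}B(\R)$ by the Closed Graph Theorem. Since $\psi_\mu \in C_0^\infty(\R) \subset S^r_{p,p}B(\R)$, we have $\psi_\mu f = T_f(\psi_\mu) \in S^r_{p,p}B(\R)$. From Definition \ref{diff} the norm $\|\,\cdot\,|S^r_{p,p}B(\R)\|$ is translation invariant (both $\|\,\cdot\,|L_p\|$ and the $\omega_{\overline m}^e(\,\cdot\,,t)_p$ are), so $\|\psi_\mu|S^r_{p,p}B\| = \|\psi|S^r_{p,p}B\|$ independently of $\mu \in \Z$. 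Hence
$$
\sup_{\mu\in \Z}\|\psi_\mu f|S^r_{p,p}B(\R)\| \le \|T_f\|\cdot \|\psi|S^r_{p,p}B(\R)\| \lesssim \|f|M(S^r_{p,p}B(\R))\|.
$$

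For the reverse inclusion the key ingredient is the localization identity
$$
\|g|S^r_{p,p}B(\R)\|^p \asymp \sum_{\mu\in\Z}\|\psi_\mu g|S^r_{p,p}B(\R)\|^p
$$
(with the obvious $\sup$-modification when $p=\infty$). Granting this, choose $\tilde\psi \in C_0^\infty(\R)$ with $\tilde\psi \equiv 1$ on $\supp\psi$ and set $\tilde\psi_\mu := \tilde\psi(\,\cdot\, - \mu)$, so that $\psi_\mu = \psi_\mu \tilde\psi_\mu$. Combining the algebra property (Theorem \ref{main-be}) with the localization identity on both sides,
\begin{align*}
\|fg|S^r_{p,p}B\|^p
&\lesssim \sum_\mu \|(\psi_\mu f)(\tilde\psi_\mu g)|S^r_{p,p}B\|^p
 \lesssim \sum_\mu \|\psi_\mu f|S^r_{p,p}B\|^p \,\|\tilde\psi_\mu g|S^r_{p,p}B\|^p \\
&\le \|f|S^r_{p,p}B(\R)_{\unif}\|_\psi^p \sum_\mu \|\tilde\psi_\mu g|S^r_{p,p}B\|^p
 \lesssim \|f|S^r_{p,p}B(\R)_{\unif}\|_\psi^p\,\|g|S^r_{p,p}B\|^p,
\end{align*}
where in the last step the localization bound is applied to the (finitely many translated copies of a) partition $\{\tilde\psi_\mu\}$. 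This gives $f \in M(S^r_{p,p}B(\R))$ with the correct norm control, completing the equivalence.

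The main obstacle is establishing the localization identity itself. For the upper bound $\sum_\mu \|\psi_\mu g\|^p \lesssim \|g\|^p$ I would use the Fourier-analytic description (Proposition \ref{def-do}(ii)): writing $\cf(\psi_\mu g) = (2\pi)^{-d/2}\cf\psi_\mu * \cf g$ and decomposing $\cf g = \sum_\ell \varphi_\ell \cf g$, only boundedly many indices $\ell$ (componentwise close to $k$) contribute to each Littlewood--Paley piece $\cfi[\varphi_k \cf(\psi_\mu g)]$; the multiplier Lemma \ref{mul1} then yields uniform-in-$\mu$ bounds on these pieces, and the resulting $L_p$-estimate is summed over $\mu$ via $\sum_\mu \|\psi_\mu h\|_{L_p}^p \asymp \|h\|_{L_p}^p$ (which follows from bounded overlap of $\supp\psi_\mu$ and the lattice structure). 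The lower bound $\|g\|^p \lesssim \sum_\mu \|\psi_\mu g\|^p$ follows from $g = \sum_\mu \psi_\mu g$, the triangle inequality in $S^r_{p,p}B$, and the same bounded-overlap property applied to the differences characterization of Definition \ref{diff} and Lemma \ref{diff1} (via Lemma \ref{ddim-1} to handle the interplay of the multivariate differences with smooth cut-offs). Standard in the isotropic case, this localization requires care in the dominating-mixed setting but ultimately reduces to the Fourier-analytic tools already assembled in Section \ref{equi-def}.
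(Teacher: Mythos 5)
Your overall architecture coincides with the paper's: the easy inclusion $M(S^r_{p,p}B(\R))\subseteq S^r_{p,p}B(\R)_{\unif}$ via boundedness of $T_f$ and translation invariance of the norm, and the reverse inclusion by combining the algebra property (Theorem \ref{main-be}) with the localization identity and a bounded-overlap argument with a second cut-off $\tilde\psi$ --- this is exactly the scheme the paper runs (for the Sobolev analogue in the proof of Theorem \ref{mul-space}, and then invoked verbatim for the Besov case). The localization identity you single out as the main obstacle is precisely the paper's Proposition \ref{F-case}, which is proved there in full.

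The genuine gap is in your sketch of how to prove that localization identity. You claim that after writing $\cf(\psi_\mu g)=(2\pi)^{-d/2}\cf\psi_\mu*\cf g$ and decomposing $\cf g=\sum_\ell\varphi_\ell\cf g$, ``only boundedly many indices $\ell$ (componentwise close to $k$) contribute to each Littlewood--Paley piece $\cfi[\varphi_k\cf(\psi_\mu g)]$.'' This is false: $\psi_\mu\in C_0^\infty$, so $\cf\psi_\mu$ is analytic and \emph{not} compactly supported, hence $\supp(\cf\psi_\mu*\varphi_\ell\cf g)$ is in general all of $\R$ and every block $\ell$ contributes to every $k$. One must instead estimate the off-diagonal contributions using the rapid (tensorized) decay of $\cf\psi$ and a convolution inequality --- in effect the paraproduct machinery that the authors explicitly describe as ``less convenient in the context of dominating mixed smoothness'' and deliberately avoid. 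The paper's Proposition \ref{F-case} instead proves localization through the characterization by pure differences (Lemma \ref{red}), using the Leibniz-type formula \eqref{mot} for mixed differences of $f\psi_\mu$ and the same three-way case analysis on $u$ as in the proof of Theorem \ref{main-be}; the compact support of $\psi_\mu$ and $|h|_\infty\le 1$ then give the bounded overlap directly in physical space (this also handles $p=1$ and $p=\infty$ uniformly, where square-function arguments are unavailable). A related soft spot: for the lower bound you invoke ``the triangle inequality,'' which only yields the $\ell_1$-sum $\sum_\mu\|\psi_\mu g\|$; to get the $\ell_p$-sum one must exploit, inside the differences characterization, that at each point only boundedly many $\mu$ contribute to $\Delta_h^{\bar m,e}(g\psi_\mu)$ --- which is Step 1 of Proposition \ref{F-case}. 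As written, your proof of the key localization identity would not go through.
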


Also in case of Besov spaces of dominating mixed smoothness there is no hope for Moser-type inequalities. 

\begin{theorem}\label{negative}
Let $d>1$, $1\leq p\leq \infty$ and $r>0$. Then there exists no constant $C>0$ such that
\beqq
\|fg| S^r_{p,p}B(\R)\| \leq C\, \big(\|f|S^r_{p,p}B(\R)\|\cdot \|g|L_{\infty}(\R)\| + \|f|L_{\infty}(\R)\|\cdot\|g|S^r_{p,p}B(\R)\|\big)
\eeqq
holds for all $f,g\in S^r_{p,p}B(\R)\cap C(\R)$.
\end{theorem}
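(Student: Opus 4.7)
The plan is to construct sequences $(f_n), (g_n)\subset \cs(\R)\subset S^r_{p,p}B(\R)\cap C(\R)$ for which the left-hand side of the claimed Moser inequality grows like $2^{2nr}$, while the right-hand side grows only like $2^{nr}$. The mechanism is high-frequency modulation placed along different coordinate axes in $f_n$ and $g_n$: each factor alone sees modulation only in a single variable, but their product sees it in two variables, and the cross-norm of $S^r_{p,p}B(\R)$ multiplies the two $2^{nr}$-gains.

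Fix a nonzero $h\in C_0^{\infty}(\re)$ and set $h_n(t):=h(t)\,e^{i2^n t}$. Since $\cf h_n(\xi)=(\cf h)(\xi-2^n)$ is a Schwartz bump centred at $2^n$, the Fourier-analytic norm in Proposition \ref{def-do}(ii) is dominated by the single dyadic block with index $k=n$, while all other blocks decay rapidly in $n$. This yields the two-sided estimate
\[
\|\,h_n\,|B^r_{p,p}(\re)\| \asymp 2^{nr}, \qquad \|\,h_n\|_{L_\infty}=\|h\|_{L_\infty},
\]
and, applied with $h$ replaced by $h^2$, also $\|\,h^2 e^{i2^n\cdot}\,|B^r_{p,p}(\re)\|\asymp 2^{nr}$. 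The lower bound is the only nontrivial point and follows by retaining only the $k=n$ contribution in Proposition \ref{def-do}(ii).

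Now define the tensor products
\[
f_n:= h_n\otimes h\otimes h\otimes\cdots\otimes h, \qquad g_n:= h\otimes h_n\otimes h\otimes\cdots\otimes h.
\]
By the cross-norm property stated in Remark \ref{blabla}(ii),
\[
\|f_n\,|S^r_{p,p}B(\R)\|=\|h_n|B^r_{p,p}\|\cdot\|h|B^r_{p,p}\|^{d-1}\asymp 2^{nr},
\]
and likewise for $g_n$, while $\|f_n\|_{L_\infty},\|g_n\|_{L_\infty}\le \|h\|_{L_\infty}^d$ are uniformly bounded. Their pointwise product is once again a tensor product,
\[
(f_n g_n)(x) = (h^2 e^{i2^n\cdot})(x_1)\,(h^2 e^{i2^n\cdot})(x_2)\prod_{i=3}^{d}h(x_i)^2,
\]
so the cross-norm gives
\[
\|f_n g_n\,|S^r_{p,p}B(\R)\|=\|h^2 e^{i2^n\cdot}|B^r_{p,p}\|^{2}\cdot\|h^2|B^r_{p,p}\|^{d-2}\asymp 2^{2nr}.
\]

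Combining these asymptotics,
\[
\frac{\|f_n g_n\,|S^r_{p,p}B(\R)\|}{\|f_n|S^r_{p,p}B\|\,\|g_n\|_{L_\infty}+\|f_n\|_{L_\infty}\,\|g_n|S^r_{p,p}B\|}\;\gtrsim\;\frac{2^{2nr}}{2^{nr}}\;=\;2^{nr}\;\longrightarrow\;\infty,
\]
contradicting the existence of any constant $C$ as in the statement. The only delicate step is the sharp two-sided estimate $\|h_n|B^r_{p,p}(\re)\|\asymp 2^{nr}$; once this one-dimensional fact is established via the Littlewood-Paley characterization, the cross-norm structure of $S^r_{p,p}B(\R)$ does the rest automatically in every dimension $d\ge 2$ and for every admissible $p$ and $r$.
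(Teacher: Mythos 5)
Your argument is correct, and it takes a genuinely different route from the paper. The paper splits the proof into two quite different constructions: for $r>1/p$ it uses dilated bumps $f_n(t)=f(2^nt)$, exploiting the differential dimension $r-1/p$ and the identity $f_n\cdot g=f_n$; for $0<r\le 1/p$ that exponent is no longer positive, so the paper resorts to a delicate family $\phi_n(t)\,t\sin(t^{-\varepsilon})$ whose Besov norms are controlled via real interpolation between $L_p$ and $W^1_p$ (with a further separate modification for $r=p=1$). Your modulation construction $h_n=h\,e^{i2^n\cdot}$ avoids this case distinction entirely, because modulation costs exactly $2^{nr}$ in $B^r_{p,p}(\re)$ for every $r>0$ and every $1\le p\le\infty$ while leaving the $L_\infty$ norm untouched; placing the modulations along the first and second axes then lets the cross-norm of Remark \ref{blabla}(ii) square the gain in the product, exactly as in the paper's $r>1/p$ step. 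The one point you should spell out is the lower bound $\|h_n|B^r_{p,p}(\re)\|\gtrsim 2^{nr}$: "retaining only the block $k=n$" presupposes $\|\cfi[\varphi_n\cf h_n]|L_p(\re)\|\gtrsim 1$ uniformly in $n$, which is true but needs a line of justification — e.g.\ note that $\sum_{|k-n|\le 1}\cfi[\varphi_k\cf h_n]$ reproduces $h_n$ up to an error that is $O(2^{-nM})$ in $L_p$ for every $M$ (since $\cf h$ is Schwartz and the remaining blocks are spectrally separated from $2^n$), so at least one of the three adjacent blocks has $L_p$ norm $\ge(\|h|L_p(\re)\|-o(1))/3$. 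With that detail supplied, your proof is complete and in fact more unified than the one in the paper; note also that it uses complex-valued test functions, which is admissible here since the spaces are defined over $\C$ (a real-valued variant with $\cos(2^nt)$ works equally well).
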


%&&&&&&&&&&&&&&&&&&&&&&&&&&&&&&&&&&&&&&&&&&&&&&&&&&&&&&&&&&&&&&&&&&&&&&&&&&&&&&&&&&&
%&&&&&&&&&&&&&&&&&&&&&&&&&&&&&&&&&&&&&&&&&&&&&&&&&&&&&&&&&&&&&&&&&&&&&&&&&&&&&&&&&&&

\subsection{Pointwise multipliers for Sobolev-Besov spaces defined on domains}
\label{main3}

%&&&&&&&&&&&&&&&&&&&&&&&&&&&&&&&&&&&&&&&&&&&&&&&&&&&&&&&&&&&&&&&&&&&&&&&&&&&&&&&&&&&
%&&&&&&&&&&&&&&&&&&&&&&&&&&&&&&&&&&&&&&&&&&&&&&&&&&&&&&&&&&&&&&&&&&&&&&&&&&&&&&&&&&&

As a service for the reader we investigate the local situation as well, i.e., we consider 
 pointwise multipliers for  Sobolev and Besov spaces defined on the unit cube $\Omega=[0,1]^d$. 
For convenience  we introduce the  spaces under consideration by taking restrictions.

\begin{definition} \label{defomega}
{\rm (i)} Let $ 1< p < \infty$  and $m\in \N$. Then
   $S^m_pW(\Omega)$ is the space of all $f\in L_p(\Omega)$ such that there exists  $g\in
   S^m_pW(\R)$ satisfying $f = g|_{\Omega}$. It is endowed with the quotient norm
   $$
      \|\, f \, |S^m_pW(\Omega)\| = \inf \Big\{ \|g|S^m_pW(\R)\|~: ~ g|_{\Omega} =
      f \Big\}\,.
   $$   
   {\rm (ii)} Let $ 1\leq  p\leq \infty$  and $r>0$. Then
      $S^{r}_{p, p}B(\Omega)$ is the space of all $f\in L_p(\Omega)$ such that there exists   $g\in
      S^{r}_{p, q }B(\R)$ satisfying $f = g|_{\Omega}$. It is endowed with the quotient norm
      $$
         \|\, f \, |S^{r}_{p,p}B(\Omega)\| = \inf \Big\{ \|g|S^{r}_{p,p}B(\R)\|~:~ g|_{\Omega} =
         f \Big\}\,.
      $$
\end{definition}

We have the following lemma.
\begin{lemma}Let $ 1< p < \infty$  and $m\in \N$. Then $S^m_pW(\Omega)$ is the collection of $f\in L_p(\Omega)$ such that
\beqq
\|f|S^m_pW(\Omega)\|:= \sum_{|\alpha|_{\infty} \leq m}\|D^{\alpha}f|L_p(\Omega)\|<\infty\,,\ \alpha \in \N_0^d\,.
\eeqq
\end{lemma}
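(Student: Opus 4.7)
The plan is to prove the asserted equivalence by establishing the two inequalities
\[
\sum_{|\alpha|_\infty\le m}\|D^\alpha f|L_p(\Omega)\| \;\lesssim\; \|f|S^m_pW(\Omega)\|
\;\lesssim\; \sum_{|\alpha|_\infty\le m}\|D^\alpha f|L_p(\Omega)\|,
\]
where the middle expression is the quotient norm from Definition~\ref{defomega}(i). The first (easy) direction is immediate: if $g\in S^m_pW(\R)$ satisfies $g|_\Omega=f$, then by the very definition of distributional derivative on $\Omega$ we have $D^\alpha f=(D^\alpha g)|_\Omega$ for any multiindex $\alpha$ with $|\alpha|_\infty\le m$, so
\[
\sum_{|\alpha|_\infty\le m}\|D^\alpha f|L_p(\Omega)\| \le \sum_{|\alpha|_\infty\le m}\|D^\alpha g|L_p(\R)\| = \|g|S^m_pW(\R)\|^{(d)},
\]
and taking the infimum over admissible $g$ yields the upper bound. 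Here I use Definition~\ref{def-so} on the right.

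The nontrivial direction is the converse: given $f\in L_p(\Omega)$ with all distributional derivatives $D^\alpha f$, $|\alpha|_\infty\le m$, in $L_p(\Omega)$, I need to produce an extension $g\in S^m_pW(\R)$ with $g|_\Omega=f$ and
\[
\|g|S^m_pW(\R)\|^{(d)} \;\lesssim\; \sum_{|\alpha|_\infty\le m}\|D^\alpha f|L_p(\Omega)\|.
\]
My plan is to construct a bounded linear extension operator by iterating a one–dimensional reflection extension variable by variable. Fix a Hestenes-type operator $E_0:W^m_p([0,1])\to W^m_p(\re)$ of the form
\[
(E_0 h)(t) = \begin{cases} h(t), & t\in[0,1],\\ \sum_{k=1}^{m+1} c_k\, h(-k t), & t<0,\\ \sum_{k=1}^{m+1} c_k\, h(1+k(1-t)), & t>1,\end{cases}
\]
where the coefficients $c_k$ are chosen so that derivatives up to order $m$ match; it is classical that $E_0$ is bounded from $W^m_p([0,1])$ to $W^m_p(\re)$, followed by multiplication with a cut-off. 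Applying $E_0$ in each coordinate separately, i.e.\ setting $E:=E_1\circ E_2\circ\cdots\circ E_d$, one obtains an operator that, for every fixed $\alpha$ with $|\alpha|_\infty\le m$, intertwines with $D^\alpha$ up to tensor-product bounded operations in each coordinate. Since $\|D^\alpha Ef|L_p(\R)\|\lesssim \|D^\alpha f|L_p(\Omega)\|$ coordinatewise by the one-dimensional boundedness of $E_0$ applied slicewise (Fubini), summing over $\alpha\in\{0,1,\ldots,m\}^d$ with $|\alpha|_\infty\le m$ delivers the required estimate, and combining with Lemma~\ref{equi} establishes equivalence.

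The main obstacle is the correct handling of the slicewise estimates: one must verify that each one-dimensional extension $E_i$, applied with the other variables frozen, preserves membership in $L_p$ of every partial derivative $D^\alpha$ with $|\alpha|_\infty\le m$ and that the bounds can be integrated in the transverse variables. This rests on the tensor-product structure of the dominating mixed Sobolev norm, which, together with Lemma~\ref{equi}, reduces the verification to bounds for $D^{\beta}_{x_i}$ on $W^m_p$ in a single variable after freezing the other $d-1$ variables — a step justified by Fubini once one checks that the distributional derivatives $D^\alpha f$ with $|\alpha|_\infty\le m$ coincide with iterated one-dimensional distributional derivatives. This identification of mixed distributional derivatives with iterated ones on the cube is the only genuinely technical point and follows from approximating $f$ by smooth functions using a standard mollification argument adapted to the cube.
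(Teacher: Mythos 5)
The paper states this lemma without proof (it is presented as a known fact, in the spirit of the intrinsic descriptions in \cite{ST}), so there is no "paper proof" to compare against; your proposal has to stand on its own, and it does. The easy direction via restriction of distributional derivatives and taking the infimum over extensions is correct. For the converse, the tensorized Hestenes reflection $E=E_1\circ\cdots\circ E_d$ is exactly the standard device for dominating mixed smoothness on a cube, and the two structural facts you rely on are both true: (a) $E_i$ acts only on the $i$-th variable, hence commutes with $D^{\alpha_j}_{x_j}$ for $j\neq i$, and $D^{\alpha_i}_{x_i}E_i$ is controlled slicewise by the one-dimensional boundedness of $E_0$ on $W^{m}_p([0,1])$; (b) the cut-off produces, via Leibniz, only derivatives of order $\le m$ in the single variable $x_i$, so all terms stay inside the family $|\alpha|_\infty\le m$ — this is precisely where the $\ell^\infty$-constraint on $\alpha$ (as opposed to $|\alpha|_1\le m$) makes the tensor construction close up. The one genuinely delicate point is the one you flag: to apply $E_0$ slicewise you must know that for a.e.\ frozen $x'$ the slice $f(\cdot,x')$, together with the slices of $D^{(j,\alpha')}f$, $j\le m$, realizes the one-dimensional Sobolev structure (so that the Hestenes matching conditions prevent singular parts at $x_i=0,1$); the mollification/Fubini argument you sketch is the standard and correct way to settle this. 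So the proposal is a complete and valid plan; I would only ask you to make explicit that the coefficients $c_k$ solve the Vandermonde system $\sum_k c_k(-k)^j=1$ for $j=0,\dots,m$, and that the commutation in (a) is verified at the level of distributions by testing against tensor-product test functions.
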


Our main results obtained in the previous subsections carry over to the local case.

\begin{theorem}\label{main-be-1}
{\rm (i)} Let $m\in \N$ and $1<p<\infty$. Then $S^m_pW(\Omega)$ is a multiplication algebra.\\
{\rm (ii)} Let $1\leq p\leq \infty$ and $r>0$. Then $S^r_{p,p}B(\Omega)$ is a multiplication algebra if  and only if 
either $1 <p\le \infty$ and $r>1/p$ or $p=1$ and $r\ge 1$.
\end{theorem}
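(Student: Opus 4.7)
The plan is to handle both parts in parallel. Sufficiency in (i) and (ii) will reduce to the global $\R$-results (Theorems \ref{main-so} and \ref{main-be}) via the quotient-space definition from Definition \ref{defomega}; the necessity in (ii) will follow by showing that the algebra property forces a continuous embedding into $L_\infty(\Omega)$, which fails outside the asserted parameter range.

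For the sufficiency, given $f,g$ in the relevant space on $\Omega$ and $\varepsilon > 0$, I would pick extensions $\tilde f, \tilde g$ in the corresponding space on $\R$ with norms at most $(1+\varepsilon)$ times the quotient norms of $f$ and $g$, respectively. Theorem \ref{main-so} or Theorem \ref{main-be} then places $\tilde f \cdot \tilde g$ in the corresponding global space together with the bound $\|\tilde f \tilde g\| \le c\|\tilde f\|\,\|\tilde g\|$. Since $(\tilde f \tilde g)|_\Omega = fg$, the definition of the quotient norm gives
\[
\|fg\|_\Omega \;\le\; c(1+\varepsilon)^2 \, \|f\|_\Omega \, \|g\|_\Omega,
\]
and letting $\varepsilon\to 0$ yields the algebra inequality. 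This step is essentially formal once the global results are in hand.

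For the necessity in (ii), assume that $S^r_{p,p}B(\Omega)$ is a multiplication algebra with constant $c$. Iterating the algebra bound gives $\|f^n\|_{S^r_{p,p}B(\Omega)} \le c^{n-1}\|f\|_{S^r_{p,p}B(\Omega)}^n$, and composing with the embedding $S^r_{p,p}B(\Omega) \hookrightarrow L_p(\Omega)$ of constant $C$ yields
\[
\|f\|_{L_{np}(\Omega)} \;=\; \|f^n\|_{L_p(\Omega)}^{1/n} \;\le\; C^{1/n} c^{(n-1)/n} \, \|f\|_{S^r_{p,p}B(\Omega)}.
\]
Since $|\Omega|<\infty$, the norms $\|f\|_{L_q(\Omega)}$ are non-decreasing in $q$ and converge to $\|f\|_{L_\infty(\Omega)}$ as $q\to\infty$; letting $n \to \infty$ above produces the embedding $S^r_{p,p}B(\Omega) \hookrightarrow L_\infty(\Omega)$. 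In the forbidden range, however, by the one-dimensional case of Lemma \ref{emb1}, $B^r_{p,p}(\re)$ is not embedded into $C(\re)$, so one can choose $h \in B^r_{p,p}(\re)$ that is unbounded on $[0,1]$. Setting $\tilde F(x) := h(x_1)\prod_{j=2}^d \chi(x_j)$ with $\chi \in C_0^\infty(\re)$ equal to $1$ on $[0,1]$, the cross-norm property (Remark \ref{blabla}) places $\tilde F$ in $S^r_{p,p}B(\R)$, hence $\tilde F|_\Omega \in S^r_{p,p}B(\Omega)$; but $\tilde F|_\Omega(x) = h(x_1)$ is unbounded on $\Omega$, contradicting the $L_\infty$-embedding just derived.

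The main obstacle is this necessity step, and more specifically the passage from the algebra inequality to the $L_\infty$-embedding. The spectral-radius-type argument depends essentially on $|\Omega|<\infty$, which ensures that $L_q(\Omega)$-norms climb monotonically to $\|f\|_{L_\infty(\Omega)}$ as $q\to\infty$ — exactly the feature that is unavailable on $\R$, and the reason why this simple reduction works for the local version but not for the global one. Once the $L_\infty$-embedding is established, producing an unbounded element by lifting a one-dimensional counterexample through the cross-norm property is entirely routine.
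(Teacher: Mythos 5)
Your sufficiency argument coincides with the paper's: by Definition \ref{defomega} the algebra inequality on $\Omega$ follows formally from Theorems \ref{main-so} and \ref{main-be} by taking near-optimal extensions, multiplying on $\R$ and restricting. For the necessity in (ii) you take a genuinely different route. The paper transfers its global counterexamples: the functions $F_n,G_n$ violating the algebra inequality on $\R$ (Step 4 of the proof of Theorem \ref{main-be}) are compactly supported, so after translation and dilation they can be placed inside $\Omega$ and disprove the inequality there (modulo the routine comparison of the quotient norm with the global norm for functions supported in the interior of $\Omega$, which needs a cutoff-multiplier argument the paper leaves implicit). You instead run the classical iterated-powers argument: $\|f^n\|\le c^{n-1}\|f\|^n$, $\|f\|_{L_{np}(\Omega)}=\|f^n\|_{L_p(\Omega)}^{1/n}$, and $|\Omega|=1$ lets $np\to\infty$, so a multiplication algebra continuously embedded in $L_p(\Omega)$ must embed into $L_\infty(\Omega)$. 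This is correct (the constant $c$ exists by the closed-graph argument the paper quotes from \cite{MS2}), it isolates exactly the feature --- finiteness of $|\Omega|$ --- that makes the local case accessible without quantitative counterexamples, and it buys a structural statement (algebra $\Rightarrow$ $L_\infty$-embedding) rather than a sequence of test functions; the price is that it says nothing in the global setting, where the paper's transference is still needed.

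There is one step you must repair. Lemma \ref{emb1} only tells you that $B^r_{p,p}(\re)$ is not continuously embedded into $C(\re)$, and this does not by itself yield an essentially unbounded element: for $0<r<1/p$ the space contains bounded discontinuous functions (characteristic functions of intervals), so failure of the $C$-embedding is a priori compatible with $B^r_{p,p}(\re)\subset L_\infty(\re)$. Moreover, even an essentially unbounded $h\in L_p(\re)$ need not be essentially unbounded on any prescribed compact set. What you actually need is the standard but sharper fact that in the range $r<1/p$, or $r=1/p$ with $p>1$, the space $B^r_{p,p}$ fails to embed into $L_\infty$ \emph{locally}; this is best obtained from an explicit compactly supported example, e.g.\ a function behaving near the origin like $|t|^{-\alpha}$ with $0<\alpha<1/p-r$ in the subcritical case, or like $|\log|t||^{\delta}$ with suitable small $\delta>0$ in the critical case $r=1/p$, $p>1$, multiplied by a cutoff. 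With such an $h$, unbounded on $[0,1]$, your tensorization via the cross-norm property and the contradiction with the derived $L_\infty$-embedding go through, and the forbidden range is exactly covered (for $p=\infty$ there is nothing to prove since $r>1/p=0$ always holds).
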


Similar as in the global case Thm. \ref{main-be-1} can be turned into a characterizations of 
$M(S^m_pW(\Omega))$ and $M(S^r_{p,p}B(\Omega))$, respectively.

\begin{theorem}\label{mul-spacew}
{\rm (i)} 
Let $m\in \N$ and $1<p<\infty$. Then
\[
M(S^m_pW(\Omega))=S^m_{p}W(\Omega))
\] 
holds in the sense of equivalent norms.
\\
{\rm (ii)}
 Let  either  $1 <p\leq \infty$ and $r>1/p$ or $p=1$ and $r\ge 1$. Then
\[
M(S^r_{p,p}B(\Omega))=S^r_{p,p}B(\Omega)
\]
holds in the sense of equivalent norms.
\end{theorem}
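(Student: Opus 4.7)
\emph{Proof plan.} The plan is to derive each equality by proving a pair of continuous embeddings, in the spirit of the global case but simplified by the fact that $\Omega = [0,1]^d$ is bounded. Throughout the argument let $X(\Omega)$ denote either $S^m_p W(\Omega)$ under the hypothesis of (i) or $S^r_{p,p} B(\Omega)$ under the hypothesis of (ii). Since $X(\Omega) \hookrightarrow L_p(\Omega)$, the Closed Graph Theorem (as recalled in Section \ref{main}) ensures that $M(X(\Omega))$ is a well-defined normed space with the operator norm of $T_f$.

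First I would establish the inclusion $X(\Omega) \hookrightarrow M(X(\Omega))$. This is immediate from Theorem \ref{main-be-1}: whenever $X(\Omega)$ is a multiplication algebra (which it is, under the assumptions of both (i) and (ii)), there exists a constant $c$ with
\[
\|\, f \cdot g \, | X(\Omega)\| \le c\, \| \, f \, | X(\Omega)\|\, \| \, g \, | X(\Omega)\|
\qquad (f,g \in X(\Omega)),
\]
which says exactly that $\| \, f \, | M(X(\Omega))\| \le c\, \| \, f \, | X(\Omega)\|$.

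For the reverse inclusion I would exploit the main difference with the global case, namely that on a bounded domain the constant function $\mathbf{1}$ belongs to $X(\Omega)$. For the Sobolev case this is obvious: $\mathbf{1}|_\Omega \in L_p(\Omega)$ and all distributional derivatives of positive order vanish. For the Besov case one picks any cut-off $\psi \in C_0^\infty(\R)$ that is a tensor product of one-dimensional smooth bumps and is identically $1$ on $\Omega$. Then $\psi \in S^r_{p,p} B(\R)$ by the cross-norm property and the embedding $C_0^\infty(\re) \hookrightarrow B^r_{p,p}(\re)$, so $\psi|_\Omega = \mathbf{1}|_\Omega \in S^r_{p,p} B(\Omega)$. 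Consequently, for every $f \in M(X(\Omega))$ the identity $f = f \cdot \mathbf{1}$ gives
\[
\| \, f \, | X(\Omega)\| \;=\; \| \, f \cdot \mathbf{1} \, | X(\Omega)\|
\;\le\; \| \, f \, | M(X(\Omega))\|\, \| \, \mathbf{1} \, | X(\Omega)\|,
\]
yielding the embedding $M(X(\Omega)) \hookrightarrow X(\Omega)$ with a constant depending only on $\| \mathbf{1} | X(\Omega)\|$.

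I do not anticipate a substantial obstacle here: in sharp contrast with the global Theorem \ref{mul-space}, no localisation via $\{\psi_\mu\}_{\mu\in\Z}$ and no passage to $X_{\unif}$ is needed, because $\mathbf{1}$ is itself an admissible ``test'' element of the space on the bounded cube. The only step requiring a short verification is the membership $\mathbf{1} \in S^r_{p,p} B(\Omega)$, which, as indicated above, reduces at once to the existence of a smooth tensor-product extension equal to $1$ on $\Omega$.
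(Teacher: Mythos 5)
Your proof is correct and follows essentially the same route as the paper: the embedding $X(\Omega)\hookrightarrow M(X(\Omega))$ is taken from the algebra property (Theorem \ref{main-be-1}), and the reverse embedding is obtained by testing $T_f$ on the constant function $g=\mathbf{1}$, which belongs to the space because $\Omega$ is bounded. Your extra remark on verifying $\mathbf{1}\in S^r_{p,p}B(\Omega)$ via a smooth tensor-product cut-off is a reasonable way to make explicit a point the paper leaves implicit.
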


Also in the local situation a Moser-type inequality does not hold.

\begin{theorem}\label{negativec}
Let $d>1$.
\\
{\rm (i)}  Let $1< p< \infty$ and $m \in \N$. There exists no constant $C>0$ such that
\beqq
\|f \, \cdot \, g| S^m_pW(\Omega)\| \leq C\big(\|f| S^m_pW(\Omega)\|\cdot \|g|L_{\infty}(\Omega)\| + \|f|L_{\infty}(\Omega)\|\cdot\|g| S^m_pW(\Omega)\|\big)
\eeqq
holds for all $f,g\in  S^m_pW(\Omega)$.\\
\\
{\rm (ii)}
Let 
$1\leq p\leq \infty$ and $r>0$. Then there exists no constant $C>0$ such that
\beqq
\|f\, \cdot \, g| S^r_{p,p}B(\Omega)\| \leq C\, \big(\|f|S^r_{p,p}B(\Omega)\|\cdot 
\|g|L_{\infty}(\Omega)\| + \|f|L_{\infty}(\Omega)\|\cdot\|g|S^r_{p,p}B(\Omega)\|\big)
\eeqq
holds for all $f,g\in S^r_{p,p}B(\Omega) \cap L_\infty (\R)$.
\end{theorem}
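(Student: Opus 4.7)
The plan is to construct, in both cases, a sequence of test functions $f_N,g_N\in C_0^\infty(\Omega^\circ)$ such that $\|f_Ng_N\,|\,X(\Omega)\|\gtrsim N^{2s}$ while $\|f_N\,|\,X(\Omega)\|+\|g_N\,|\,X(\Omega)\|\lesssim N^s$ and $\|f_N\|_{L_\infty(\Omega)}+\|g_N\|_{L_\infty(\Omega)}=O(1)$, where $s=m$ in case (i) and $s=r$ in case (ii); the hypothetical Moser inequality would then force $N^{2s}\lesssim N^s$, contradicting $N\to\infty$. The underlying mechanism is precisely the one anticipated in the remark around \eqref{ws-014}: the Leibniz expansion of $\partial_1^m\partial_2^m(fg)$ contains the uncontrolled cross term $\partial_1^m f\cdot\partial_2^m g$, and an analogous phenomenon governs the mixed difference $\Delta^{\overline m,\{1,2\}}_h(fg)$ in the Besov setting. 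Accordingly, fix $\chi\in C_0^\infty((1/4,3/4))$ with $\chi\equiv 1$ on $[3/8,5/8]$ and set
\[
f_N(x):=\chi(x_1)\sin(Nx_1)\prod_{j=2}^d\chi(x_j),\qquad g_N(x):=\chi(x_1)\,\chi(x_2)\sin(Nx_2)\prod_{j=3}^d\chi(x_j).
\]
Both are supported in $(1/4,3/4)^d\subset\Omega^\circ$, and the product factorises as
\[
f_N(x)g_N(x)=\bigl[\chi(x_1)^2\sin(Nx_1)\bigr]\bigl[\chi(x_2)^2\sin(Nx_2)\bigr]\prod_{j\ge 3}\chi(x_j)^2,
\]
with the two oscillations segregated into different coordinates.

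For part (i), $\|f_N\|_{L_\infty}+\|g_N\|_{L_\infty}=O(1)$ is trivial, and since a derivative $D^\alpha f_N$ with $|\alpha|_\infty\le m$ picks up at most the factor $N^m$ coming from differentiating $\sin(Nx_1)$, the intrinsic norm in the Lemma after Definition \ref{defomega} gives $\|f_N\,|\,S^m_p W(\Omega)\|\lesssim N^m$, and likewise for $g_N$. For the lower bound, using the tensor-product form of $f_Ng_N$, one computes
\[
\partial_1^m\partial_2^m(f_Ng_N)(x)=\bigl[\chi^2\sin(N\cdot)\bigr]^{(m)}\!(x_1)\,\bigl[\chi^2\sin(N\cdot)\bigr]^{(m)}\!(x_2)\,\prod_{j\ge 3}\chi(x_j)^2,
\]
whose $L_p(\Omega)$-norm factorises and, by a routine Leibniz computation for the one-dimensional factor, is of order $N^{2m}$. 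Thus $\|f_Ng_N\,|\,S^m_p W(\Omega)\|\gtrsim N^{2m}$, finishing case (i).

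For part (ii), the same family is used. The one-dimensional estimate $\|\chi\sin(N\cdot)\,|\,B^r_{p,p}(\re)\|\asymp N^r$ (easily read off from Proposition \ref{def-do}(ii), since $\gf(\chi\sin(N\cdot))$ concentrates near $\pm N$ and thus mainly excites the dyadic block $2^j\asymp N$) together with the cross-norm property (Remark \ref{blabla}(ii)) yields $\|f_N\,|\,S^r_{p,p}B(\R)\|+\|g_N\,|\,S^r_{p,p}B(\R)\|\lesssim N^r$ and $\|f_Ng_N\,|\,S^r_{p,p}B(\R)\|\asymp N^{2r}$; the latter because $\gf(f_Ng_N)$ is concentrated near $(\pm N,\pm N,0,\ldots,0)$, so the dominant block $(k_1,k_2,0,\ldots,0)$ with $2^{k_1},2^{k_2}\asymp N$ contributes $\asymp N^{2r}$. (One could equivalently invoke Lemma \ref{diff1} at scale $|h_1|,|h_2|\asymp 1/N$.)

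The step I expect to be the main obstacle is transferring these bounds from $\R$ to $\Omega$. The upper bounds are immediate: since $f_N,g_N,f_Ng_N$ are supported in $\Omega^\circ$, they are themselves extensions of their restrictions, so $\|\cdot\,|\,X(\Omega)\|\le\|\cdot\,|\,X(\R)\|$ by the quotient-norm definition. In case (i) the intrinsic norm furnishes the matching lower bound, since $\|D^\alpha(f_Ng_N)\,|\,L_p(\Omega)\|=\|D^\alpha(f_Ng_N)\,|\,L_p(\R)\|$ by support. In case (ii), fix $\psi\in C_0^\infty(\Omega^\circ)$ with $\psi\equiv 1$ on $\supp(f_Ng_N)$; for any extension $\tilde h$ of $(f_Ng_N)|_\Omega$ to $\R$ one has $\psi\tilde h=f_Ng_N$ on $\R$, so the boundedness of multiplication by $\psi\in C_0^\infty$ on $S^r_{p,p}B(\R)$ (a standard fact, valid for all $r>0$ and $1\le p\le\infty$), combined with the infimum over $\tilde h$, gives $\|f_Ng_N\,|\,S^r_{p,p}B(\R)\|\lesssim\|f_Ng_N\,|\,S^r_{p,p}B(\Omega)\|$. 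Assembling all estimates yields $N^{2s}\lesssim N^s$ uniformly in $N$, the desired contradiction.
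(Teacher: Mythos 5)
Your construction is correct, and it takes a genuinely different route from the paper's. The paper disposes of Theorem \ref{negativec} in one sentence: the global counterexamples of Theorems \ref{negativea} and \ref{negative} are compactly supported, hence (after translating them into $\Omega$) they work locally as well. Those global counterexamples are dilated bumps $f(2^nt)$ whenever the differential dimension $s-1/p$ is positive (all of the Sobolev case and the Besov case $r>1/p$), and the chirps $\phi_n(t)\,t\sin(t^{-\varepsilon})$ when $0<r\le 1/p$, the latter requiring the interpolation estimates \eqref{ws-016}--\eqref{ws-017}. You instead use modulated bumps $\chi(x_i)\sin(Nx_i)$ placed in two different coordinates: modulation keeps the $L_\infty$ norms bounded while the one-dimensional Sobolev/Besov norms grow like $N^{s}$ for \emph{every} $s>0$, so one construction covers all parameter ranges, avoids the case distinction and the chirp computation, and realizes most directly the heuristic of \eqref{ws-014} (the uncontrolled cross term $\partial_1^m f\cdot\partial_2^m g$). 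Both arguments reduce to dimension one via the cross-norm property. You are also more careful than the paper about the only genuinely local point, the lower bound for the quotient norm of the product: in the Sobolev case the intrinsic norm of the Lemma following Definition \ref{defomega} settles it, and in the Besov case your cutoff argument rests on the boundedness of multiplication by a fixed $C_0^\infty$ function on $S^r_{p,p}B(\R)$, which is indeed available for all $r>0$ and $1\le p\le\infty$ (it is contained in Step 2 of the proof of Proposition \ref{F-case}); the paper leaves this step implicit.
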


%&&&&&&&&&&&&&&&&&&&&&&&&&&&&&&&&&&&&&&&&&&&&&&&&&&&&&&&&&&&&&&&&&&&&&&&&&&&&&&&&&&&
%&&&&&&&&&&&&&&&&&&&&&&&&&&&&&&&&&&&&&&&&&&&&&&&&&&&&&&&&&&&&&&&&&&&&&&&&&&&&&&&&&&&

\section{Proofs}\label{proofs}

%&&&&&&&&&&&&&&&&&&&&&&&&&&&&&&&&&&&&&&&&&&&&&&&&&&&&&&&&&&&&&&&&&&&&&&&&&&&&&&&&&&&
%&&&&&&&&&&&&&&&&&&&&&&&&&&&&&&&&&&&&&&&&&&&&&&&&&&&&&&&&&&&&&&&&&&&&&&&&&&&&&&&&&&&

%&&&&&&&&&&&&&&&&&&&&&&&&&&&&&&&&&&&&&&&&&&&&&&&&&&&&&&&&&&&&&&&&&&&&&&&&&&&&&&&&&&&
%&&&&&&&&&&&&&&&&&&&&&&&&&&&&&&&&&&&&&&&&&&&&&&&&&&&&&&&&&&&&&&&&&&&&&&&&&&&&&&&&&&&

\subsection{Proof of the results in Subsection \ref{main1}}

%&&&&&&&&&&&&&&&&&&&&&&&&&&&&&&&&&&&&&&&&&&&&&&&&&&&&&&&&&&&&&&&&&&&&&&&&&&&&&&&&&&&
%&&&&&&&&&&&&&&&&&&&&&&&&&&&&&&&&&&&&&&&&&&&&&&&&&&&&&&&&&&&&&&&&&&&&&&&&&&&&&&&&&&&

To prepare the proof of Theorem \ref{main-so} we need the following lemma.

\begin{lemma}\label{hilfe}
Let $1 <p< \infty$ and $m \in \N$.
Let $\beta \in \N_0^d$ such that there exists some $L \in \N$, $L < d$, and $\beta = (m , m, \ldots \, ,  m, \beta_{L+1}, \ldots \, ,\beta_d)$ 
where $\max_{j=L+1, \ldots\, , d} \beta_j < m$.
Let $N \in \N$ such that $L\leq N \leq  d$.
Then there exists a constant $C$ such that
\[
\Big(\int_{\re^{N}} \sup_{x_{N+1}, \ldots \, , x_d \in \re}\, |D^\beta f (x)|^p \prod_{j=1}^{N} \, dx_j\Big)^{1/p} 
\le C \, \|\, f\, |S^m_p W(\R)\|
\]
holds for all $f \in S^m_p W(\R)$.
\end{lemma}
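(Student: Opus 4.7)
If $N = d$ the claim is immediate: the supremum is absent, and since $|\beta|_\infty \le m$ the left-hand side equals $\|D^\beta f\, |L_p(\R)\|$, which is a summand in $\|f\,|S^m_p W(\R)\|^{(d)}$. Hence I assume $N < d$ from now on.

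The idea is to freeze the first $N$ coordinates and then use a tensor-product Sobolev embedding of smoothness $1$ in the remaining $d - N$ coordinates. For almost every fixed $(x_1, \ldots, x_N) \in \re^N$ view
\[
g_x(y) := D^\beta f(x_1, \ldots, x_N, y_1, \ldots, y_{d-N})
\]
as a function of $y \in \re^{d-N}$. By Lemma \ref{emb} applied in dimension $d - N$ with smoothness $m = 1$, combined with Lemma \ref{equi} and the equivalence of $\ell_1$- and $\ell_p$-norms on the finite set $\{0,1\}^{d-N}$, one obtains
\[
\sup_{y \in \re^{d-N}} |g_x(y)|^p \le C \sum_{\alpha \in \{0,1\}^{d-N}} \int_{\re^{d-N}} |D^\alpha g_x(y)|^p\, dy\, ,
\]
with a constant $C$ depending only on $p$ and $d - N$. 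Rewriting $D^\alpha g_x(y) = (D^{\beta + \tilde\alpha} f)(x_1, \ldots, x_N, y)$, where $\tilde\alpha \in \N_0^d$ denotes the extension of $\alpha$ by zeros in the first $N$ positions, integrating over $(x_1, \ldots, x_N) \in \re^N$, and invoking Fubini yields
\[
\int_{\re^N} \sup_{x_{N+1}, \ldots, x_d \in \re} |D^\beta f(x)|^p \prod_{j=1}^N dx_j \le C \sum_{\alpha \in \{0,1\}^{d-N}} \|D^{\beta + \tilde\alpha} f\, |L_p(\R)\|^p\, .
\]

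It remains to check that each multi-index $\beta + \tilde\alpha$ appearing above satisfies $|\beta + \tilde\alpha|_\infty \le m$, so that the corresponding term is controlled by $\|f\,|S^m_p W(\R)\|$. For $j \in \{1, \ldots, L\}$ one has $\tilde\alpha_j = 0$ (since $j \le N$) and $\beta_j = m$, giving $(\beta+\tilde\alpha)_j = m$; for $j \in \{L+1, \ldots, N\}$ one still has $\tilde\alpha_j = 0$ and $\beta_j < m$; and for $j \in \{N+1, \ldots, d\}$ the hypothesis $\beta_j < m$ together with $\beta_j \in \N_0$ forces $\beta_j \le m - 1$, so $(\beta + \tilde\alpha)_j \le m$. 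The only real subtlety is this last estimate, which is precisely why the hypothesis $\max_{j > L}\beta_j < m$ (rather than $\le m$) is essential: otherwise the added $\tilde\alpha_j = 1$ in the supremum-directions could push a coordinate of $\beta + \tilde\alpha$ past $m$ and leave the corresponding derivative outside the space $S^m_p W(\R)$.
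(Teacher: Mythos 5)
Your argument is correct, but it takes a genuinely different route from the paper. You freeze the first $N$ variables and invoke the embedding $S^1_pW(\re^{d-N})\hookrightarrow C(\re^{d-N})$ (Lemma \ref{emb}) fiberwise, together with the reduced norm of Lemma \ref{equi} and Fubini; the whole burden of the proof then rests on the combinatorial check that $|\beta+\tilde\alpha|_\infty\le m$ for every $\alpha\in\{0,1\}^{d-N}$, which you carry out correctly and which is exactly where the hypothesis $\max_{j>L}\beta_j<m$ enters. The paper instead proves the required ``sup-trace'' inequality from scratch by Fourier analysis: after reducing by density to $f$ with compact spectrum, it expands $D^\beta f$ in the nonsmooth dyadic system in the variables $x_{N+1},\dots,x_d$, controls the supremum over those variables by Nikol'skij's inequality (Proposition \ref{Nikolski}), and reassembles everything via the Littlewood--Paley characterization and the vector-valued multiplier assertion of Lemma \ref{mul1}; there the condition $\beta_j<m$ for $j>N$ appears as $\alpha_j\ge 1>1/p$, which makes the geometric series over $k_{N+1},\dots,k_d$ summable. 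Your approach is shorter and delegates to the cited embedding what the paper in effect reproves inline. The one point you should make explicit is the preliminary reduction by density to smooth $f$ (as the paper does at the start of its proof), so that the restriction $D^\beta f(x_1,\dots,x_N,\cdot)$ is a genuine element of $S^1_pW(\re^{d-N})$ for a.e.\ $(x_1,\dots,x_N)$, its weak derivatives coincide with the restrictions of $D^{\beta+\tilde\alpha}f$, and the supremum on the left-hand side is a measurable function of $(x_1,\dots,x_N)$.
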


\bproof
Using the density of functions with compactly supported Fourier transform in $S^m_pW(\R)$
(which is a consequence of Proposition \ref{def-do}) we may assume that $\supp \cf f$ is compact.
Let $(\chi_{ k })_k$ be  the non-smooth decomposition of unity  defined in \eqref{chi}. It follows  
\be\label{decom}
 f(x) =\sum_{ k \in \N_0^d}\gf^{-1}[\chi_{ k }\gf  f](x)\,,\qquad x\in \R,
\ee
where  the sum on the right-hand side of \eqref{decom} has only a  finite number of nontrivial terms. Consequently we have
\beqq
D^{\beta}f(x) =\sum_{ k \in \N_0^d}\gf^{-1}[\chi_{ k }\gf D^{\beta}f](x)\,,\qquad x\in \R.
\eeqq
Let $\cf_n $ denote the Fourier transform on $\re^n$. 
Freezing $x_{1}, \ldots \, ,x_{N}$ and choosing $n= d - N$ we get as above
\beqq
D^{\beta}f(x) =\sum_{k_{N+1}, \ldots \, , k_d \in \N_0} \gf^{-1}_n [\chi_{k_{N+1}} \otimes \ldots \otimes \chi_{k_d}\gf_n D^{\beta}f](x)\,,\qquad x\in \R.
\eeqq
By making use of this identity,  triangle inequality and the Nikol'skij inequality, stated in  Proposition \ref{Nikolski}, we conlude
\beqq
& I:=  & 
\Big(\int_{\re^{N}} \sup_{x_{N+1}, \ldots \, , x_d \in \re}\, |D^\beta f (x)|^p \prod_{j=1}^{N} \, dx_j\Big)^{1/p}
\\
& \le &  \sum_{k_{N+1}, \ldots \, , k_d \in \N_0} \bigg(\int\limits_{\re^{N}}
\sup_{x_{N+1}, \ldots \, , x_d \in \re}\,
\bigg| \gf^{-1}_n [\chi_{k_{N+1}} \otimes \ldots \otimes \chi_{k_d}\gf_n D^{\beta}f](x) \bigg|^p \prod_{j=1}^{N} \, dx_j\bigg)^{1/p}
\\
&\le & c_1 \, \sum_{k_{N+1}, \ldots \, , k_d \in \N_0} \bigg(\prod_{j=N+1}^d 2^{\frac{k_j}{p}}\bigg)
\bigg(\int\limits_{\R} \bigg|\gf^{-1}_n [\chi_{k_{N+1}} \otimes \ldots \otimes \chi_{k_d}\gf_n D^{\beta}f](x)  \bigg|^p d  x\bigg)^{1/p}\,.
\eeqq
The Littlewood-Paley assertion, see Proposition \ref{def-do}, implies
\beqq
&& \hspace{-0.7cm}
\bigg(\int\limits_{\re^{N}} \bigg|\gf^{-1}_n [\chi_{k_{N+1}} \otimes \ldots \otimes \chi_{k_d}\gf_n D^{\beta}f](x)  \bigg|^p  \prod_{j=1}^{N} \, dx_j \bigg)^{1/p}
\\
&\le & c_2 \, \bigg(\int\limits_{\re^{N}} \bigg(\sum_{k_1, \ldots \, , k_{N} \in \N_0}
\bigg|\gf^{-1} [\chi_{k_1}  \otimes \ldots \otimes \chi_{k_d}\gf D^{\beta}f](x)  \bigg|^2 \bigg)^{p/2} \prod_{j=N+1}^{d} \, dx_j \bigg)^{1/p}
\eeqq
We define a multi-index $\alpha$ by taking $\alpha + \beta = (m, \ldots \, ,m)$.
Inserting this inequality in the previously obtained one we find

\beqq
I &\le & c_3  \, \sum_{k_{N+1}, \ldots \, , k_d \in \N_0} \bigg(\prod_{j=N+1}^d 2^{\frac{k_j}{p}}\bigg)
\bigg\|\bigg(\sum_{k_1, \ldots \, , k_{N} \in \N_0} \Big|\gf^{-1}\big[ \chi_{k}\gf D^{\beta}f\big]\Big|^2\bigg)^{1/2}\bigg|L_p(\R)\bigg\|
\\
 &= & c_3  \, \sum_{k_{N+1}, \ldots \, , k_d \in \N_0} \bigg(\prod_{j=N+1}^d 2^{k_j (\frac 1p -\alpha_j)}\bigg)
 \\
 && \times \quad
 \bigg\|\bigg(\sum_{k_1, \ldots \, , k_{N} \in \N_0}  \bigg( \prod_{j=N+1}^d 2^{2 k_j\alpha_j}\bigg) 
 \Big|\gf^{-1}\big[ \chi_{k}\gf D^{\beta}f\big]\Big|^2\bigg\}^{1/2}\bigg|L_p(\R)\bigg\|
 \\
 &\le & c_4\,   \bigg\|\bigg\{\sum_{k\in \N_0^d} \bigg( \prod_{j=N+1}^d 2^{2 k_j\alpha_j} \bigg) 
 \Big|\gf^{-1}\Big[ \chi_{k} \gf D^\beta f\Big]\Big|^2\bigg\}^{1/2}\bigg|L_p(\R)\bigg\|\,,
\eeqq
where we used 
\[
 \alpha_j \ge 1 > \frac 1p \, , \qquad j= N + 1, \ldots \, d\, .
\]
Let $\phi_0, \phi \in C_0^\infty (\re)$ be functions such that 
\beqq
\phi_0 (\xi)  =   1 \quad \mbox{on}\quad [-1,1]\qquad\text{and}\qquad 
\phi (\xi)  =   1 \quad \mbox{on}\quad \supp (\chi_1)\,.
\eeqq
For $j \in \N$ we put 
$\phi_j (\xi):= \phi (2^{-j+1}t)$ and $
\phi_{ k } := \phi_{{k_1}}\otimes \ldots \otimes
\phi_{{k_d}}$ if $  k  \in
\N_0^d$. Then it follows
\beqq
\bigg\|\bigg\{\sum_{k\in \N_0^d} && \hspace*{-0.7cm}\bigg( \prod_{j=N+1}^d 2^{2 k_j\alpha_j} \bigg) 
\Big|\gf^{-1}\Big[ \chi_{k}(\xi) \, \phi_k (\xi)\, \xi^\beta \, \gf f(\xi)\Big](\, \cdot \, )\Big|^2\bigg\}^{1/2}\bigg|L_p(\R)\bigg\|
\\
& = &   \bigg\|\bigg\{\sum_{k\in \N_0^d} 2^{2 |k|_1 m } \,  \Big|\gf^{-1}\Big[ \chi_{k} \, M_k \, \gf  f\Big]\Big|^2
\bigg\}^{1/2}\bigg|L_p(\R)\bigg\|\, , 
\eeqq
where 
\[
M_k (\xi)  :=  \phi_k (\xi)\, \bigg( \prod_{j=1}^{N} 2^{- k_jm} \xi_j^{\beta_j}\bigg)  \, 
\bigg( \prod_{j=N+1}^d 2^{ k_j(\alpha_j-m)} \xi_j^{\beta_j}\bigg)\, . 
\]
Observe that in case $k_j \ge 1$, $j=1, \ldots \, d$, we have
\beqq
\| \, M_k (2^k\, \cdot \, )\, |S^r_p W(\R)\| &  = & \Big(\prod_{j=1}^{N} 2^{k_j(\beta_j -m)}\Big) \, 
\Big( \prod_{j=N+1}^d 2^{ k_j(\alpha_j + \beta_j - m)}\Big) 
\Big\| \, \phi_1 (2\xi)\, \xi^\beta \,\Big|S^r_p W(\R)\Big\| < \infty
\eeqq
for any $r>0$. For the remaining $k$ a more or less obvious modification can be applied. 
Hence we find 
\[
\sup_{k \in \N_0^d} \, \| \, M_k (2^k\, \cdot \, )\, |S^r_p W(\R)\| < \infty 
\]
since 
\[
 |\beta|_\infty \le m \qquad \mbox{and}\qquad \alpha_j + \beta_j\le m\, , \quad j= N+1, \ldots \,, d\, . 
 \]
 But this is guaranteed by our assumptions.
Now Lemma \ref{mul1} yields
\[
I\,  \lesssim \,  \bigg\|\bigg(\sum_{k\in \N_0^d} 2^{2|k|_1m}\big|\gf^{-1}  \chi_{k} \gf  f \big|^2\bigg)^{1/2}\bigg|L_p(\R)\bigg\|
\]
which completes the proof.
\eproof

\vskip 0.3cm
\noindent
{\bf Proof of Theorem \ref{main-so}.} Let $f,g\in S^m_pW(\R)$. 
We shall use the norm given in Lemma \ref{equi}
\beqq
\|f\, \cdot \, g |S^m_pW(\R)\|^*:= \sum_{\gamma\in \{0,m\}^d}\, \|D^{\gamma}(f\,\cdot \, g)|L_p(\R)\|.
\eeqq
Using the density of functions with compactly supported Fourier transform in $S^m_pW(\R)$
 we may assume that $f$ and $g$ are $C^\infty$ functions.
Leibniz rule yields
\[
 D^{\gamma}(f\,\cdot \, g)(x) = \sum_{\beta \in \N_0^d: ~ 0 \le \beta \le \gamma} \, \binom{\gamma}{\beta}
D^{\beta}f(x) \, D^{\gamma - \beta} g  (x)\, .
\]
Let us assume $|\beta|_\infty < m$.
Then from the definition of $S^m_p W (\R)$ we derive $D^\beta f \in S^{m-|\beta|_\infty}_p W(\R)$ and 
Lemma \ref{emb} we conclude  $S^{m-|\beta|_\infty}_p W(\R) \hookrightarrow C (\R)$. Hence 
\beqq
\| \, D^{\beta}f \, D^{\gamma - \beta} g  \, |L_p(\R)\| \, & \leq & \, \| \, D^\beta f\, |C(\R)\|\,  \| D^{\gamma - \beta} g\, |L_p(\R)\|
\nonumber
\\
& \leq & \, c_1 \,  \| \,  D^\beta f\, |S^{m-|\beta|_\infty}_pW(\R)\| \, \|\, g\, |S^m_pW(\R)\|
\nonumber
\\
& \leq & \, c_1 \,  \| \,   f\, |S^{m}_pW(\R)\| \, \|\, g\, |S^m_pW(\R)\|\, , 
\eeqq
where $c_1 := \| Id: ~S^{m-|\beta|}_p W(\R) \to C (\R) \|$.
Of course, a similar argument can be applied if $|\gamma -\beta|_\infty < m$. It remains to deal with 
the situation  $|\beta|_\infty = |\gamma -\beta|_\infty = m$.
Without loss of generality we assume
$\beta= (m, \ldots , m , \beta_{L+1}, \ldots \, , \beta_{N}, 0 , \ldots  \, ,0)$ for some $L,N\in \N$ and 
\[
0 <  \beta_j <m \, , \qquad L+1 \le j \le N\, .
\]
But now we can use Lemma \ref{hilfe} and obtain
\beqq%\label{key22}
&& \| D^{\beta}f \cdot D^{\gamma - \beta}g\, |L_p(\R)\|
\\
&\le & 
\bigg(\int\limits_{\re^{N}} \sup_{x_{N+1}, \ldots \, , x_d \in \re}
|D^{\beta} f(x)|^p \prod_{j=1}^{N} d x_j\bigg)^{1/p} \bigg(\int\limits_{\re^{d-N}}
\sup_{x_1, \ldots \, , x_{N} \in \re} \, |D^{\gamma - \beta}g(x)|^p \prod_{j=N+1}^{d} d x_j\bigg)^{1/p}
\\
& \le & C^2\,  \|\, f\, |S^m_p W(\R)\| \, \|\, g\, |S^m_p W(\R)\|  
\eeqq
which proves the claim. \qed

\vskip 0.3cm
\noindent
{\bf Proof of Theorem \ref{negativea}.} Here we can work with the same test functions as in proof of 
Theorem \ref{negative} below.
Since the B-case is a bit more complicated we give details in this situation.
\qed

\vskip 0.3cm
\noindent
{\bf Proof of Theorem \ref{mul-space}.} Let further $\psi $ be the function as in the Definition \ref{def-unif}. 
Also Sobolev spaces of dominating mixed smoothness satisfy a localization property of the following form: it holds
\beqq
\|f|S^m_{p}W(\R)\|\asymp \Big(\sum_{\mu\in \Z} \| \psi_{\mu}f|S^m_{p}W(\R)\|^p\Big)^{1/p}\,.
\eeqq 
Here $1 <p< \infty$ and $m \in \N_0$ (we identify $S^0_{p}W$ with $L_p$).
Let
$\phi\in C_0^{\infty}(\R)$ with $\phi\equiv 1 $ on support of $\psi$. Let $f\in S^m_pW(\R)$ and $g\in S^m_{p}W(\R)_{\unif}$.
Employing this localization principle and Theorem \ref{main-so} we obtain
\beqq 
\| f \, \cdot \, g| S^m_pW(\R)\| &\leq &  c_1 \, \Big(\sum_{\mu\in \Z} \| \psi_{\mu}\phi_{\mu} gf|S^m_{p}W(\R)\|^p\Big)^{1/p}
\\
&\leq&  c_2\,  \Big(\sum_{\mu\in \Z} \|  \psi_{\mu}  f|S^m_{p}W(\R)\|^p\cdot\| \phi_{\mu}  g |S^m_{p}W(\R)\|^p\Big)^{1/p} 
\\
&\leq & c_3 \, \| f| S^m_{p}W(\R)\| \cdot \sup_{\mu\in \Z}\| \phi_{\mu}g|S^m_{p}W(\R)\|.
\eeqq
Since cardinality of the set $D_{\mu}:=\{\nu\in \Z: \supp \phi_\mu \cap \psi_{\nu} \not=\emptyset \}$ is finite and independent of $\mu$, from Theorem \ref{main-so} we obtain  
$$ \| \phi_{\mu}g|S^m_{p}W(\R)\|= \Big\| \phi_{\mu}g\Big(\sum_{\nu\in D_{\mu}} \psi_{\nu}\Big)\Big|S^m_{p}W(\R)\Big\| \ \leq\ c\, \sup_{\nu\in \Z}\| \psi_{\nu}g|S^m_{p}W(\R)\|$$ which implies
\beqq
\| f \, \cdot \, g| S^m_pW(\R)\| &\leq &  c_4\, \| f| S^m_{p}W(\R)\| \cdot \sup_{\mu\in \Z}\| \psi_{\mu}g|S^m_{p}W(\R)\|.
\eeqq
Hence,
 $$ S^m_{p}W(\R)_\unif \hookrightarrow M(S^m_{p}W(\R)).$$
On the other hand, with  $g\in M(S^m_pW(\R))$, we derive 
\beqq
\| \psi_{\mu}g|S^m_{p}W(\R)\| 
& \leq & \|g| M(S^m_{p}W(\R))\|\cdot \| \psi_{\mu}|S^m_{p}W(\R)\| 
\\
& = & \|g| M(S^m_{p}W(\R))\|\cdot \| \psi |S^m_{p}W(\R)\|.
\eeqq
Consequently  
$$M(S^m_{p}W(\R))  \hookrightarrow S^m_{p}W(\R)_{\unif}$$ which completes the proof.
\qed

%&&&&&&&&&&&&&&&&&&&&&&&&&&&&&&&&&&&&&&&&&&&&&&&&&&&&&&&&&&&&&&&&&&&&&&&&&&&&&&&&&&&
%&&&&&&&&&&&&&&&&&&&&&&&&&&&&&&&&&&&&&&&&&&&&&&&&&&&&&&&&&&&&&&&&&&&&&&&&&&&&&&&&&&&

\subsection{Proof of the results in Section \ref{main2}}

%&&&&&&&&&&&&&&&&&&&&&&&&&&&&&&&&&&&&&&&&&&&&&&&&&&&&&&&&&&&&&&&&&&&&&&&&&&&&&&&&&&&
%&&&&&&&&&&&&&&&&&&&&&&&&&&&&&&&&&&&&&&&&&&&&&&&&&&&&&&&&&&&&&&&&&&&&&&&&&&&&&&&&&&&

\vskip 0.3cm
\noindent
{\bf Proof of Theorem \ref{main-be}}. {\it Step 1. } Let $r <m\leq r+1$. Since the norm  $\|\cdot\,|\,S^{r}_{p,p}B(\R)\|^{(m)}$ 
does not depend on $m>r$ in the sense of equivalent norms, see Lemma \ref{diff1}, we shall prove that  
\beqq
     \|f\, \cdot \, g\,|\,S^r_{p,p} B(\R)\|^{(2m)} \leq C \,  \| f|S^r_{p,p}B(\R) \| \cdot \| g|S^r_{p,p}B(\R) \|\,
\eeqq 
holds for all $f,g\in S^r_{p,p}B(\R)$. 
Taking into account Lemma \ref{emb1} we obtain
\beqq
\| f\, \cdot \, g\,|L_p(\R)\| \, \leq\, \| f|L_p(\R)\|\cdot \| g|C(\R)\|\, \leq\, \| f|S^r_{p,p}B(\R)\|\cdot \|g|S^r_{p,p}B(\R)\|.
\eeqq  
This inequality can be interpreted as the estimate needed for the term with $e= \emptyset$.
Next we need some identities for differences.
Note that if $\psi,\, \phi:\ \re \to \C$ and $m \in \N$ we have
\be\label{formular}
\Delta_h^{m}(\psi \phi)(x)=\sum_{j=0}^{m} \binom{m}{j} \, \Delta_h^{m-j}\psi(x+j h)\, \Delta_h^{j}\phi(x),\qquad x,h\in \re \, ,
\ee 
which can be proved by induction on $m$.
Let $e\subset [d]$, $e\not=\emptyset$ and recall the notation
\[
x \diamond y = (x_1\, \cdot \,y_1,...\,, x_d\, \cdot \,  y_d)\in \R\, 
\]
and
\[
\N_0^d(e) = \big\{k\in \N_0^d: \ k_i=0\ \text{if}\ i\not \in e\big\}.
\]
Then we derive from \eqref{formular} that
\be\label{mot}
\Delta_{h}^{2{\bar{m}},e}(f\, \cdot \,  g)(x)=\sum_{ u \in \N_0^d(e),\,|u|_\infty\leq 2m} \binom{2\bar{m}}{u}\, 
\Delta_{h}^{2 \bar{m} - u ,e}f(x+ u  \diamond h)
\Delta_{h}^{ u ,e}g(x)\,,\quad\, x, h\in \R\, , 
\ee 
holds. Here $2\bar{m}-u :=(2m-u_1,...,2m-u_d)$ and 
\[
\binom{2\bar{m}}{u} = \prod_{i \in e} \binom{2m}{u_i} \, .
\]
The main step of the proof will consists in  estimating  the terms
\beqq
S_{e,u} :=\bigg\{\sum\limits_{k \in \N_0^d(e)} 2^{r|k|_1 p}\Big(\sup_{|h_i| < 2^{-k_i}, i \in e} 
\big\| \Delta_{h}^{2 m - u ,e}f(\cdot+ u  \diamond h)\Delta_{h}^{ u ,e}g(\cdot)|L_p(\R)\big\|\Big)^p\bigg\}^{1/p}
\eeqq 
$e \not= \emptyset$, $u \in \N_0^d(e)$, $|u|_\infty \le 2m$, by considering some different cases.
\\
{\it Step 2.} The case $u_i \le m$ for all $i\in e$. Obviously we have 
\[
2m-u_i \ge m \, , \qquad i \in e\, . 
\]
Using a change of variables in the $L_p$-integral we obtain
\beqq
 \big\| \Delta_{h}^{2\bar{m} -u ,e}f(\cdot+u  \diamond h)\Delta_{h}^{u ,e}g(\cdot)|L_p(\R)\big\|  
 &\le & \big\| \Delta_{h}^{2\bar{m} -u ,e}f(\cdot+u  \diamond h)|L_p(\R)\big\|\,  \sup_{x\in \R} |\Delta_{h}^{u ,e}g(x)|\\
& \le & c_1 \| g |C(\R)\|\,  \big\| \Delta_{h}^{\bar{m} ,e}f(\cdot )|L_p(\R)\big\|\,. 
\eeqq
The embedding $ S^r_{p,p}B(\R) \hookrightarrow C(\R)$    implies 
\beqq
\sup_{ |h_i| < 2^{-k_i}, i \in e} \big\| \Delta_{h}^{2\bar{m} -u ,e} f(\cdot+u  \diamond h)\Delta_{h}^{u ,e} \, g(\cdot)\, |L_p(\R)\big\| 
& \le & c_1\,  \big\|g |C(\R)\big\| \,  \omega_{\bar{m}}^{e}(f,2^{-k })_p   
\nonumber
\\
&\le & c_2\, \big\|g |S^{r}_{p,p}B(\R)\big\|\,   \omega_{\bar{m}}^{e}(f,2^{-k })_p  \,.
\eeqq
Consequently we have 
\beqq
S_{e,u}
& \le & c_2 \, \big\|g |S^{r}_{p,p}B(\R)\big\|  \, \bigg(\sum\limits_{k  \in \N_0^d(e)} 2^{r|k |_1 p}\, \omega_{\bar{m}}^{e}(f,2^{-k })^{p}_p\bigg)^{1/p}   
\nonumber
\\
& \le &  c_2\, \big\|g |S^{r}_{p,p}B(\R)\big\|\,  \big\|f |S^{r}_{p,p}B(\R)\big\|\, .
\eeqq
The case $u_i\geq m$ for all $ i\in e$ can be handled in the same way by interchanging the roles of $f$ and $g$.
\\
{\it Step 3.} The remaining cases. Let there exist $L ,N \in \N$ such that
$e= \{1,2,\ldots\, , N\}$,  $u \in \N_0^d(e)$  and 
\[
 u := (u_1, \ldots \, u_L,    u_{L+1},...,u_N,0,\ldots,0)
\]
with 
\[
m  \le  u_i \le 2 m\, , \quad i = 1, \ldots\, ,  L,\qquad 0\leq   u_i < m, \quad i = L+1, \ldots\, ,  N, 
\]
and $L  < N \le d$. By assuming $|u|_\infty > m $ we cover all remaining cases up to an enumeration.
\\
{\it Substep 3.1.} Let $r>1/p$.
Working with the tensor product system $(\varphi_{k})_{{k}\in \N_0^d}$  we conclude
\[
 f(x) = \sum_{ \ell \in \zz^{d}}  \gf^{-1} [ \varphi_{k+ \ell } \gf f](x)
\]
with convergence in $ S^r_{p,p}B(\re^d)$ and therefore in $C(\re^d)$. 
Here we used the convention that in the univariate case $\varphi_n \equiv 0$ if $n< 0$ which implies 
$\varphi_{(k_1, \ldots \, , k_d)} \equiv 0$ if $\min_j k_j < 0$.
Hence 
we have the decompositions
\beqq
f(x) = \sum_{{\ell} \in \zz^d}  \gf^{-1}  [\varphi_{{k} + {\ell} } \gf f](x)  \qquad 
\text{and}\qquad 
g(x)=\sum_{ {\nu} \in \zz^d}  \gf^{-1}  [\varphi_{{k} + {\nu} } \gf g](x)\, , \qquad x \in \R\, , 
\eeqq 
with convergence in $C(\R)$. To simplify notation we put
\[
f_{{\ell} } : =\gf^{-1} [\varphi_{ {\ell} } \gf f] \qquad \mbox{and} \qquad g_{{\ell} } : =\gf^{-1}  [\varphi_{{\ell} } \gf g]\, , \qquad 
\ell \in \zz^d\, .
\]  
Then we obtain from triangle inequality
 \beqq
 \big\| \Delta_{h}^{2 \bar{m} - u ,e}f(\cdot+ u  \diamond h)&&\hspace{-0.7cm}
 \Delta_{h}^{ u ,e}g(\cdot)\,  |L_p(\R)\big\|
\nonumber
 \\
 &\le & \sum_{  \ell , \nu \in \zz^d}  \big\| \Delta_{h}^{2 \bar{m} - u ,e}f_{k+ \ell }(\cdot+ u  \diamond h)
\Delta_{h}^{ u ,e}g_{k+ \nu }(\cdot  ) \, |L_p(\R)\big\|\,.
\eeqq
We will estimate the sum on the right-hand side term by term.
It follows
\beqq
\big\| \Delta_{h}^{2 \bar{m} - u ,e}f_{k+ \ell }(\cdot+ u  \diamond h)
&& \hspace{-0.8cm}\Delta_{h}^{ u ,e}g_{k+ \nu }(\cdot  )\,  |L_p(\R)\big\|
\nonumber
\\
& \le &  \bigg(\int\limits_{\re^{d-L}} \sup_{\substack{x_i\in \re\\i \le L}} 
\big|\Delta_{h}^{2 \bar{m} - u ,e}f_{k+ \ell }(x+ u  \diamond h) \big|^p \prod_{i=L+1}^d  d  x_i \bigg)^{1/p}    
\nonumber
\\
&& \times \qquad 
\bigg(\int\limits_{\re^{L}} \sup_{\substack{x_i\in \re\\i > L}} \big|\Delta_{h}^{ u ,e }g_{k+ \nu }(x) \big|^p
\prod_{i=1}^L  dx_i \bigg)^{1/p}
\eeqq
Let $\gf_L$ denote the Fourier transform with respect to $(x_1, \ldots  , x_L)$.
Observe that for any $h \in \re^L$
\[
\supp  \gf_L  (f_{k+\ell} (\, \cdot \, + h, x_{L+1} , \ldots  , x_d))\,  \subset \{(\xi_1, \ldots  , \xi_L):~ |\xi_j|\le 3 \, 2^{k_j+\ell_j-1}\, , \: j=1, \ldots \, , L \} \, ,
\]
independent of $x_{L+1} , \ldots \, x_d$. Consequently, Nikol'skijs inequality in Proposition \ref{Nikolski} yields
\beqq
 \bigg(\int\limits_{\re^{d-L}} \sup_{\substack{x_i\in \re\\i \le L}} && \hspace{-0.7cm}
\big|\Delta_{h}^{2 \bar{m} - u ,e}f_{k+ \ell }(x+ u  \diamond h) \big|^p \prod_{i=L+1}^d  d  x_i \bigg)^{1/p} 
\\
& \le &  c_3\, \prod_{i=1}^L 2^{(k_i + \ell_i)/p} 
\bigg(\int\limits_{\re^{d}}  
\big|\Delta_{h}^{2 \bar{m} - u ,e} f_{k+ \ell }(x+ u  \diamond h) \big|^p  dx \bigg)^{1/p} 
\eeqq
with a constant $c_3$ independent of $f$, $k$ and $\ell$. A simple change of coordinates  and an analogous argument with 
respect to $g_{k + \nu}$ results in 
\beqq
&& \hspace{-0.8cm}
\big\| \Delta_{h}^{2 \bar{m} - u ,e}f_{k+ \ell }(\cdot+ u  \diamond h)
\Delta_{h}^{ u ,e}g_{k+ \nu }(\cdot  )\,  |L_p(\R)\big\|
\\
& \le &  c_4 \, \Big(\prod_{i=1}^L 2^{(k_i + \ell_i)/p}\Big) \Big(\prod_{i=L+1}^{d} 2^{(k_i + \nu_i)/p}\Big) 
\big\|\Delta_{h}^{2 \bar{m} - u ,e}f_{k+ \ell } \big|L_p (\R)\big\|  \, \big\|\Delta_{h}^{u ,e}g_{k+ \nu} \big|L_p (\R)\big\|    
\nonumber
\eeqq
We need one more notation. We put
\[
 \omega (\ell):= \{i \in \{1, \ldots , d\}:~ \ell_i <0 \} \qquad \mbox{and}\qquad 
 \overline{\omega} (\ell):= \{i \in \{1, \ldots , d\}:~ \ell_i \ge 0 \}\, .
\]
Writing $\Delta_{h}^{2 \bar{m} - u ,e}$ as 
\[
\Delta_{h}^{2 \bar{m} - u ,e} = \Big(\prod_{i \in \overline{\omega} (\ell) \cap e} 
 \Delta_{h_i}^{2 m - u_i}\Big) \Big(\prod_{i \in \omega (\ell) \cap e } \Delta_{h_i}^{2 m - u_i}\Big)
\]
it is easily seen that 
\[
 \sup_{|h_i|< 2^{k_i} , ~ i \in e} \, \big\|\Delta_{h}^{2 \bar{m} - u ,e}f_{k+ \ell } \big|L_p (\R)\big\|
 \le c_5 \, \prod_{i \in \omega (\ell) \cap e} 2^{\ell_i (2m-u_i)}\, \big\|f_{k+ \ell } \big|L_p (\R)\big\|\, , 
\]
where we have applied Lemma \ref{ddim-1} and Proposition \ref{peetremax}.
Altogether we have found the estimate
\beq \label{ws-002}
 \sup_{|h_i|< 2^{k_i} , ~ i \in e} \, && \hspace{-0.8cm}
\big\| \Delta_{h}^{2 \bar{m} - u ,e}f_{k+ \ell }(\cdot+ u  \diamond h)
\Delta_{h}^{ u ,e}g_{k+ \nu }(\cdot  )\,  |L_p(\R)\big\|
\\
& \le &  c_6 \, \Big(\prod_{i=1}^L 2^{(k_i + \ell_i)/p}\Big) \Big(\prod_{i=L+1}^{d} 2^{(k_i + \nu_i)/p}\Big)  
\Big(\prod_{i \in \omega (\ell)\cap e } 2^{\ell_i (2m-u_i)} \Big)
\Big( \prod_{i \in \omega (\nu)\cap e} 2^{\nu_i u_i}\Big)\nonumber
\\
&& \qquad \times \quad  \|f_{k+ \ell } |L_p (\R)\|  \, \|g_{k+ \nu} |L_p (\R)\|    
\nonumber
\eeq
with a constant $c_6$ independent of $f,g,k,\ell$ and $\nu$.
Observe that
\beqq
&& \hspace{-0.8cm} 2^{r|k|_1}\, \Big(\prod_{i=1}^L 2^{(k_i + \ell_i)/p}\Big) \Big(\prod_{i=L+1}^{d} 2^{(k_i + \nu_i)/p}\Big)  
\Big(\prod_{i \in \omega (\ell) \cap e} 2^{\ell_i (2m-u_i)} \Big)
\Big( \prod_{i \in \omega (\nu)\cap e} 2^{\nu_i u_i}\Big)
\\
& = &   \Big(\prod_{i=1}^d 2^{(k_i + \ell_i) r}\,  2^{(k_i + \nu_i) r}\Big) 
\Big(\prod_{i=1}^{L} 2^{(k_i+\ell_i) ( \frac 1p -r)}\Big) \Big(\prod_{i=L+1}^d 2^{(k_i + \nu_i)(\frac 1p - r)} \Big)
\Big(\prod_{i=1}^L 2^{-\nu_i  r} \Big)
\\
& \times & \quad \Big(\prod_{i=L+1}^N 2^{-\ell_i r} \Big)\Big(\prod_{i=N+1}^d 2^{-\ell_i r} \Big)\Big(\prod_{i \in \omega (\ell)\cap e} 2^{\ell_i (2m-u_i)} \Big)
\Big( \prod_{i \in \omega (\nu)\cap e} 2^{\nu_i u_i}\Big)\, .
\eeqq
Later on we will have to sum up only with respect to those terms where $\min_j (k_j + \ell_j) \ge 0$
or $\min_j (k_j + \nu_j) \ge 0$. Observe that $k \in \N_0^d(e)$, i.e., $k_{L+1} = \,  \ldots \, = k_N=0$ and therefore 
$\ell_{N+1},  \,  \ldots \, , \ell_d \ge 0$. Taking this into account it is obvious that 
\beq\label{ws-003}
\Big(\prod_{i=N+1}^{d} 2^{-\ell_i r}\Big)&&\hspace{-0.89cm} \Big(\prod_{i=1}^{L} 2^{(k_i+\ell_i) ( \frac 1p -r)}\Big) 
\Big(\prod_{i=L+1}^d 2^{(k_i + \nu_i)(\frac 1p - r)} \Big)\\
&\le& 
\Big(\prod_{i=N+1}^{d} 2^{-(\ell_i+k_i) \varepsilon}\Big)\Big(\prod_{i=1}^{L} 2^{-(k_i+\ell_i) \varepsilon }\Big) 
\Big(\prod_{i=L+1}^d 2^{-(k_i + \nu_i) \varepsilon} \Big)
\nonumber
\le 1
\eeq
if  $\varepsilon=\min(r,r - 1/p)$. Let $\delta :=\min(r, m-r)$. Clearly $\delta \in (0,1)$. Furthermore 
\beqq
\Big(\prod_{i=L+1}^N 2^{-\ell_i r} \Big)
\Big(\prod_{i \in \omega (\ell)\cap e} 2^{\ell_i (2m-u_i)} \Big) 
&=& \Big(\prod_{L< i\leq N\atop i\in \bar{\omega}(\ell)} 2^{-\ell_ir} \Big)\Big(\prod_{L< i\leq N\atop i\in \omega(\ell)} 2^{\ell_i(2m-u_i-r)} 
\Big)\Big(\prod_{1\leq i\leq L\atop i\in \omega(\ell)} 2^{\ell_i(2m-u_i)} \Big)\\
& \le &   \Big(\prod_{i= L+1}^N  2^{-|\ell_i| \delta}\Big)
\eeqq
and
\beqq
\Big(\prod_{i=1}^L 2^{-\nu_i  r} \Big)  
\Big( \prod_{i \in \omega (\nu)\cap e} 2^{\nu_i u_i}\Big)
& = &  \Big(\prod_{{L< i \le N \atop i \in \omega (\nu)}}  2^{\nu_i u_i}\Big)
 \Big(\prod_{{i \le L \atop i \in \omega (\nu)}}  2^{\nu_i (u_i-r)}\Big)
\Big(\prod_{{i \le L \atop i \in \bar{\omega} (\nu)}}  2^{-\nu_i r }\Big)
\\
&\le &  
\Big(\prod_{i=1}^L  2^{-|\nu_i| \delta}\Big) \, .
\eeqq
Next we apply the inequality 
\beqq
\sum_{j\in \N_0} |a_j| \leq c_7 \, \Big(\sum_{j\in \N_0} 2^{j\varepsilon p}|a_j|^p\Big)^{1/p}\, , 
\eeqq
valid for all  $\varepsilon>0$ with an appropriate constant $c_7$ depending on $\varepsilon$. 
This yields
\beq \label{ws-004}
&& \hspace{-0.8cm}
\Bigg\{ \sum_{k \in \N_0^d(e)} \bigg[
\sum_{\substack{\ell_i\in \zz, i\not \in \{L+1,...,N\} \\ \nu_i\in \zz, L+1 \le i \le d}}
 \sup_{|h_i|< 2^{k_i} , ~ i \in e} \, 
\big\| \Delta_{h}^{2 \bar{m} - u ,e}f_{k+ \ell }(\cdot+ u  \diamond h)
\Delta_{h}^{ u ,e}g_{k+ \nu }(\cdot  )\,  |L_p(\R)\big\|\bigg]^p\Bigg\}^{1/p}
\\
& \le &
c_7 \, \Bigg\{ \sum_{k \in \N_0^d(e)} 
\sum_{\substack{\ell_i\in \zz, i\not \in \{L+1,...,N\} \\ \nu_i\in \zz, L+1 \le i \le d}}
 \sup_{|h_i|< 2^{k_i} , ~ i \in e} \, \big\| \Delta_{h}^{2 \bar{m} - u ,e}f_{k+ \ell }(\cdot+ u  \diamond h)
\Delta_{h}^{ u ,e}g_{k+ \nu }(\cdot  )\,  |L_p(\R)\big\|^p\Bigg\}^{1/p}
\nonumber
\\
& \le &
c_8 \, \Big(\prod_{i=1}^L  2^{-|\nu_i| \delta}\Big)
\Big(\prod_{i= L+1}^N  2^{-|\ell_i| \delta}\Big)
\Bigg\{ \sum_{k \in \N_0^d(e)} 
\sum_{\substack{\ell_i\in \zz, i\not \in \{L+1,...,N\} \\ \nu_i\in \zz, L+1 \le i \le d}} 
 2^{|k+\ell|_1rp}  \, 2^{|k+\nu|_1rp}
\nonumber
\\
&& \hspace{6cm}\times \quad 
 \|f_{k+ \ell } |L_p (\R)\|^p  \, \|g_{k+ \nu} |L_p (\R)\|^p  \Bigg\}^{1/p}  \, , 
\nonumber
\eeq
see \eqref{ws-002}, \eqref{ws-003}.
To prepare the next estimate we try to reorganize the summation in the sum in $\{\, \ldots \,\}$.
Therefore we consider 
\beqq
S(\ell_{L+1}, \ldots \, , \ell_N, \nu_1, \ldots \, , \nu_L):=
\sum_{k \in \N_0^d(e)} 
\sum_{\substack{\ell_i\in \zz, i\not \in \{L+1,...,N\} \\ \nu_i\in \zz, L+1 \le i \le d}} 
 2^{|k+\ell|_1rp}  \, 2^{|k+\nu|_1rp} a_{k+ \ell }  \, b_{k+ \nu}\, , 
\eeqq
where $\ell_{L+1}, \ldots \, , \ell_N, \nu_1, \ldots \, , \nu_L$ are considered as fixed and $(a_n)_n$ and $(b_j)_j$
are sequence of nonnegative numbers such that $a_n = b_j =0$ if $\min_{i=1, \ldots \, ,d} n_i, \min_{i=1, \ldots \, , d} j_i <0$.
Now we turn to an investigation of the linear system of equations 
\beq\label{a-10}
n_i & := &  k_i+\ell_i \, , \qquad i \in \{1, \, \ldots \,  , N\}\, ,  
\\
\label{a-11}
j_i & := & k_i + \nu_i \, , \qquad i \in \{1, \ldots \, , N\}\, .
\eeq
Here we consider $\ell_1, \ldots , \ell_L, \nu_{L+1}, \ldots \, , \nu_N, k_1, \ldots , k_N$ as variables.
Obviously we are confronted with the following types of smaller systems of equations
\beqq
n_i & := &  k_i + \ell_i 
\\
j_i - \nu_i & := & k_i \, , \qquad i\in {1, \ldots \, L}\, ,   
\eeqq
and 
\beqq
n_{i} - \ell_i & := &  k_{i}  
\\
j_{i} & := & k_{i} + \nu_{i}\, , \qquad i \in \{L+1, \ldots \, , N\}\, . 
\eeqq
Because of 
\[
 \left|\begin{matrix}
 1 & 1 \\
 1 & 0 
\end{matrix}\right| = 1 \qquad \mbox{and}\qquad  \left|\begin{matrix}
 1 & 0 \\
 1 & 1 
\end{matrix}\right| = 1
\]
we know that the mapping 
\[
T(\ell_{L+1}, \ldots \, , \ell_N, \nu_1, \ldots \, , \nu_L): ~ 
(k_1, \ldots, k_N, \ell_1, \ldots \, , \ell_L, \nu_{L+1}, \ldots \, , \nu_N) \mapsto (n_1, \ldots, n_N, j_1, \ldots \, j_N)
\]
is one-to-one. 
It maps $\N_0^N \times \zz^N$ onto a certain 
subset $\mathcal{T}(\ell_{L+1}, \ldots \, , \ell_N, \nu_1, \ldots \, , \nu_L)$ of $\zz^{2N}$.
Recall, $k_{N+1} = \ldots = k_d=0$. We supplement our system \eqref{a-10}, \eqref{a-11} by $n_i := \ell_i$ and $j_i:= \nu_i$, 
$i=N+1, \ldots \, , d$. 
Hence, we can extend $T$ to an injective mapping defined on $\zz^{2d}$. We denote this extension by ${\mathbb T}$.
Let $n:= (n_1, \ldots, n_d)$ and $j:= (j_1, \ldots \, j_d)$. 
Now we can perform a change of summation according to $n= k+\ell$ and $j= k + \nu$ simultaneously. 
This leads to 
\beqq
S(\ell_{L+1}, \ldots \, , \ell_N, \nu_1, \ldots \, , \nu_L) & = &
\sum_{(n_1, \ldots \, , n_N, j_1 \ldots \, , j_N) \in \mathcal{T}(\ell_{L+1}, \ldots \, , \ell_N, \nu_1, \ldots \, , \nu_L)}
\\
&& \quad  \times \quad 
\sum_{n_{N+1}, \ldots \, , n_d =0}^\infty  \sum_{j_{N+1}, \ldots \, , j_d =0}^\infty 
2^{|n|_1rp}  \, 2^{|j|_1rp} a_{n}  \, b_{j}\, . 
\eeqq
But this implies
\beqq
S(\ell_{L+1}, \ldots \, , \ell_N, \nu_1, \ldots \, , \nu_L) \le 
\sum_{(n,j) \in \N_0^{2d}} 2^{|n|_1rp}  \, 2^{|j|_1rp} a_{n}  \, b_{j} \, .
\eeqq
Rewriting this inequality we have proved
\beq \label{ws-005}
\Bigg\{ \sum_{k \in \N_0^d(e)} && \hspace{-0.7cm}
\sum_{\substack{\ell_i\in \zz, i\not \in \{L+1,...,N\} \\ \nu_i\in \zz, L+1 \le i \le d}} 
 2^{|k+\ell|_1rp}  \, 2^{|k+\nu|_1rp} \|f_{k+ \ell } |L_p (\R)\|^p  \, \|g_{k+ \nu} |L_p (\R)\|^p  \Bigg\}^{1/p} 
\nonumber
\\ 
& \le & \Bigg\{  
\sum_{(n,j) \in \N_0^{2d}} 2^{|n|_1rp}  \, 2^{|j|_1rp} \, \|f_{n} |L_p (\R)\|^p  \, \|g_{j} |L_p (\R)\|^p\Bigg\}^{1/p} 
\nonumber
\\
&\le & \|f |S^r_{p,p}B(\R)\|\, \|g |S^r_{p,p}B(\R)\|\, .
\eeq
Now we are in position to estimate $S_{e,u} $ under the given restrictions.  
From \eqref{ws-004} and  \eqref{ws-005}  we derive 
\beqq
 S_{e,u} &\le & 
\Bigg\{ \sum_{k \in \N_0^d(e)} \bigg[\sum_{\ell \in \zz^d}\, \sum_{\nu \in \zz^d}
 \sup_{|h_i|< 2^{k_i} , ~ i \in e} \, 
\big\| \Delta_{h}^{2 \bar{m} - u ,e}f_{k+ \ell }(\cdot+ u  \diamond h)
\Delta_{h}^{ u ,e}g_{k+ \nu }(\cdot  )\,  |L_p(\R)\big\|\bigg]^p\Bigg\}^{1/p}
\\
& \le & \sum_{\substack{\ell_i\in \zz, L < i \le N\\ \nu_i\in \zz,  1\leq i \le L}}
\Bigg\{ \sum_{k \in \N_0^d(e)} \bigg[\sum_{\substack{\ell_i\in \zz, i\not \in \{L+1,...,N\} \\ \nu_i\in \zz, L+1 \le i \le d}}
 \, \ldots \, \bigg]^p\Bigg\}^{1/p}
\\
& \le & 
c_8 \,  \sum_{\substack{\ell_i\in \zz, L < i \le N \\ \nu_i\in \zz, 1\leq i \le L}}
\Big(\prod_{i=1}^L  2^{-|\nu_i| \delta}\Big)
\Big(\prod_{i= L+1}^N  2^{-|\ell_i| \delta}\Big)  \| \, f \, |S^r_{p,p}B(\R)\|\,  \| \, g \, |S^r_{p,p}B(\R)\|
\\
& \le & c_9  \, \| \, f \, |S^r_{p,p}B(\R)\|\,  \| \, g \, |S^r_{p,p}B(\R)\|
\eeqq
with $c_9$ independent of $f$ and $g$. This proves the claim in case $r>1/p$.
\\
{\em Substep 3.2} Let $p=1$ and $r=1$.
We shall use \eqref{ws-002} with $\varepsilon =0$ and obtain
\[
 S_{e,u} \le  
\sum_{k \in \N_0^d(e)} \sum_{\ell \in \zz^d}\, \sum_{\nu \in \zz^d}
 \sup_{|h_i|< 2^{k_i} , ~ i \in e} \, 
\big\| \Delta_{h}^{2 \bar{m} - u ,e}f_{k+ \ell }(\cdot+ u  \diamond h)
\Delta_{h}^{ u ,e}g_{k+ \nu }(\cdot  )\,  |L_1(\R)\big\|
\]
Now we can continue as in the previous substep.
\\
{\it Step 4.} Proof of the necessity of the restrictions. 
Let $r>0$ and $1\leq p \leq \infty$. 
Then the isotropic Besov space $B^r_{p,p}(\re)$ is an algebra if and only if 
either $r>1/p$ or $r=p=1$,  see  \cite[Thm.~2.6.2/1]{Tr78}, \cite[Thm.~2.8.3]{Tr83} or \cite[Thm.~4.6.4/1]{RS}. 
Hence, if either $r=1/p$ for some $1 <p< \infty$ or $0 <r<1/p$, $1 \le p <\infty$,  
there exist two sequences $\{f_n\}_{n\in \N}\subset B^r_{p,p}(\re)$ and $\{g_n\}_{n\in \N}\subset B^r_{p,p}(\re)$ such that
\beqq
\| f_n \, \cdot \, g_n| B^r_{p,p}(\re)\| \geq n\, \| f_n| B^r_{p,p}(\re)\|\,  \| g_n| B^r_{p,p}(\re)\|\,, \qquad n \in \N\, .
\eeqq
Let $\Psi\in C_0^{\infty}(\re)$, $\Psi\not\equiv 0 $. For $n\in \N$ and  $x=(x_1,...,x_d)\in \R$ we define the sequences
\beqq
F_n(x):=f_n(x_1)\cdot \Psi(x_2)\cdot...\cdot \Psi(x_d)\quad
\text{and}\quad
G_n(x)=g_n(x_1)\cdot \Psi(x_2)\cdot...\cdot \Psi(x_d)\,.
\eeqq
The cross-norm property of $ S^r_{p,p}B(\R)$
yields $\{F_n\}_{n\in \N}\subset S^r_{p,p}B(\R)$ and $\{G_n\}_{n\in \N}\subset S^r_{p,p}B(\R)$. Using 
the cross-norm property once again we find
\beqq
\|\, F_n\, \cdot \, G_n\, |S^r_{p,p}B(\R)\|& = &  \| \, f_n\, \cdot \, g_n\, |B^r_{p,p}(\re)\|\,  \|\Psi^2|B^r_{p,p}(\re)\|^{d-1}\\
&\geq &   n\, \| f_n| B^r_{p,p}(\re)\|\,  \| g_n| B^r_{p,p}(\re)\|\,  \|\Psi^2|B^r_{p,p}(\re)\|^{d-1}\, 
\eeqq
and
\beqq
\|F_n|S^r_{p,p}B(\R)\|\cdot \| G_n|S^r_{p,p}B(\R)\| = \| f_n| B^r_{p,p}(\re)\|\,  \| g_n| B^r_{p,p}(\re)\|\,\|\Psi|B^r_{p,p}(\re)\|^{2(d-1)}\, .
\eeqq
This obviously disproves that  $S^r_{p,p}B(\R)$ is  a multiplication algebra. 
\qed

\vskip 0.3cm
\noindent
{\bf Proof of Theorem \ref{negative}.} {\em Step 1.} Let $r> 1/p$.
It will be convenient for us to switch to an equivalent norm on $B^r_{p,p} (\re)$ given by
\[
\|h|B^r_{p,p}(\re)\|\asymp  \|\, h\, |L_p(\re)\| + \Big( \int_0^\infty \, t^{-rp} \,  \omega_m (h, t)^p \frac{dt}{t}\Big)^{1/p} . 
\]
with $m>r$ (compare with \eqref{besov}).
Obviously the two terms on the right-hand side have different homogeneity properties.
We have 
\[
\| h (\lambda \, \cdot \, )|L_p(\re)\| = \lambda^{-1/p} \, \| h |L_p(\re)\|
\]
and 
\[
\Big( \int_0^\infty \, t^{-rp} \,  \omega_m (h(\lambda \, \cdot \, ), t)^p \frac{dt}{t}\Big)^{1/p} = \lambda^{r-1/p}
\, \Big( \int_0^\infty \, t^{-rp} \,  \omega_m (h, t)^p \frac{dt}{t}\Big)^{1/p}\, , \qquad \lambda >0\, .
\]
Let $f\in C_0^{\infty}(\re)$ with 
$\supp f\subset [-2,2]$, $f(t)=1$ if $t \in [-1,1]$ and $\sup_t |f(t)| =1$. 
For $n\in \N$ we define $f_n(t): =f(2^nt)$, $t\in \re$.
Hence we have
\beqq
\|f_n|L_\infty (\re)\|=1 \qquad \text{and}\qquad
\|f_n|B^r_{p,p}(\re)\| \asymp 2^{n(r-1/p)}.
\eeqq
Let $g\in C_0^{\infty}(\re)$ such that  $g(t)=1$ if $t \in [-2,2]$ and $\sup_t |g(t)| =1$. It follows 
\beqq
\|g|L_\infty (\re)\| \asymp \|g|B^r_{p,p}(\re)\|\asymp 1 \qquad\text{and}\qquad 
\|f_ng|B^r_{p,p}(\re)\|   \asymp 2^{n(r-1/p)}.
\eeqq
For $x\in \R$ we put
\beqq
F_n(x):= f_n(x_1) \, \prod_{j=2}^d g(x_j)\qquad \text{and}\qquad G_n(x): = g(x_1)\, \cdot \,  f_n(x_2)\, \cdot 
\prod_{j=3}^d g(x_j)\, .
\eeqq
The crossnorm property and $f_n \, \cdot \, g = f_n $ imply that 
\beqq
\|F_n\, \cdot G_n|S^r_{p,p}B(\R)\| \asymp 2^{2n(r-1/p)}
\eeqq
and
\beqq
\|F_n|S^r_{p,p}B(\R)\|\cdot\|G_n|L_\infty (\R)\| = \|G_n|S^r_{p,p}B(\R)\|\cdot\|F_n|L_\infty (\R)\| \asymp 2^{n(r-1/p)} .
\eeqq
This proves the claim in case $r>1/p$.
\\
{\em Step 2.} Let $0 < r \le 1/p$. This time the argument can not rely on the differential dimension $r-\frac 1p$.
Our construction is oriented in the observation made after \eqref{ws-014}.
For $n \in \N$ we define  $\phi_n $ such that $\phi_n (t) = 1$ if $1/n \le t \le 1$, 
$\phi_n (t) = \phi_1 (t)$, $t \ge 1$ and $\supp \phi_n \subset [\frac{1}{2n}, 3/2]$.
Let $\varepsilon >0$. 
We put 
\[
 f_n (t):= \phi_n (t) \, t\, \sin (t^{-\varepsilon})\, , \qquad t>0\, . 
\]
For $t\le 0$ we put $f_n (t) = 0$.
\\
{\em Substep 2.1.} Let $0 < r < 1$. 
Then we assume in addition that 
$\phi_n$ is linear on $[\frac{1}{2n}, \frac 1n]$ and $[1,3/2]$, i.e., 
\[
\phi_n (t) = 2n (t-\frac{1}{2n})\, , \qquad \frac{1}{2n} \le t \le  \frac 1n\, , 
\]
and 
\[
\phi_n (t) = 2 \, (\frac 32 - t)\, , \qquad 1 \le t \le \frac 32\, .
\]

	$$ \begin{tikzpicture} 
	 \fill (0,0) circle (1.5pt);
	 \draw[->, ](0,0) -- (6.8,0);
	 \draw[->, ] (0,0) -- (0,3);
	 \draw (2,2.0) -- (4,2.0);
	 \draw (-0.05,2.0) -- ( 0.05, 2.0);
	 \node[below] at (0,0) {$0$};
	 \node [below] at (4,0) {$1$};
	 \node [left] at (0,2.0) {$1$};
	 \node [below] at (6,0) {$\frac{3}{2}$};
	 \node [below] at (6,0) {$\frac{3}{2}$};
	 \node [below] at (2,0) {$\frac{1}{n}$};
	 \node [below] at (1,0) {$\frac{1}{2n}$};
	 \node [left] at (0,2.8) {$\phi_n(t)$};
	 \node [below] at (6.8,0) {$t$};
	 \draw (6, -0.05) -- (6, 0.05);
	 \draw (4, -0.05) -- (4, 0.05);
	 \draw (2, -0.05) -- (2, 0.05);
	 \draw (1, -0.05) -- (1, 0.05);
	 \draw (1, 0) -- (2, 2.0);
	 \draw (4, 2.0) -- (6, 0);
\node [right] at (2.0,-1) {Figure 1};
	 \end{tikzpicture}$$

Altogether $f_n$ is Lipschitz for all $n$ and 
\[
\sup_t |f_n (t)|\le 1\, .
\]
To estimate the norm in $B^r_{p,p} (\re)$ we proceed by real interpolation.
First we investigate the norm in $W^1_p (\re)$.
By assuming $\varepsilon > 1/p$, $\varepsilon \neq 1+ 1/p$,  we conclude
\beqq
\| f_n'|L_p (\re)\| & \asymp &  \Big(\int_{1/n}^1 |\sin (t^{-\varepsilon}) - \varepsilon \, t^{-\varepsilon}\, \cos (t^{-\varepsilon})|^p dt\Big)^{1/p}
+ 2n \, \Big(\int_{1/(2n)}^{1/n} |\, t^{1-\varepsilon} \, |^p dt\Big)^{1/p}
\\
& \asymp & n^{\varepsilon  -1/p} \, .
\eeqq
Next we employ 
\[
(L_p(\re), W^1_p (\re))_{r, p} = B^r_{p,p} (\re)
\]
in the sense of equivalent norms, see, e.g., \cite[Chapt.~6]{BL} and \cite[2.4]{Tr83}.
The related interpolation inequality 
\[
\|f_n |B^r_{p,p}(\re)\|\lesssim \|f_n|L_p (\re)\|^{1-r} \, \| f_n | W^1_p (\re)\|^r
\]
yields 
\be\label{ws-016}
\|f_n |B^r_{p,p}(\re)\|\lesssim n^{(\varepsilon  -1/p)r}\, .
\ee
Employing the characterization by first order differences  of $B^r_{p,p}(\re)$ one can show that there 
exists some positive constant $c$ such that 
\be\label{ws-017}
\|f_n |B^r_{p,p}(\re)\|\ge c\,  n^{(\varepsilon  -1/p)r}\, , \qquad n \in \N\, .
\ee
This is a bit technical, one may take the proof of Lemma 2.3.1/1 in \cite{RS} as an orientation.
Now we are in position to define 
our test functions.
Let $\psi $ be a nontrivial $C_0^\infty $ function on $\re$ such that $\psi (t)=1$ if $t \in \supp f_n$ and  $\sup_t |\psi (t)| = 1$. Then we put
\[
F_n (x):= f_n (x_1)\, \prod_{j=2}^d \psi (x_j)\, , \qquad x \in \R\, , \quad n \in \N\,.
\]
and 
\[
G_n (x):= \psi (x_1)\, f_n (x_2) \, \prod_{j=3}^d \psi (x_j)\, , \qquad x \in \R\, , \quad n \in \N\,.
\]
From the cross-norm property we derive 
\[
 \|F_n |S^r_{p,p}B(\R)\|\asymp \|G_n |S^r_{p,p}B(\R)\| \asymp \|f_n |B^r_{p,p}(\re)\|\asymp   n^{(\varepsilon  -1/p)r} 
\]
and analogously 
\[
\|F_n \, \cdot \, G_n |S^r_{p,p}B(\R) \| \asymp  \|f_n |B^r_{p,p}(\re)\|^2 \asymp  n^{2(\varepsilon  -1/p)r}\, , \qquad n \in \N\, .
\]
In view of $\|F_n |L_\infty (\R)\|, ~ \|G_n |L_\infty (\R)\|\le 1$ this proves the claim.
\\
{\em Step 2.} Let $r=p=1$. We need to modify our construction. We will be rather sketchy.
In Step 1 $f_n$ was Lipschitz. This time we need to have the first derivative to be Lipschitz.
By smoothing $\phi_n$ in such a way that $\phi_n'$ is Lipschitz, see Figure 2,
we can prove 
\[
\| f_n''|L_1 (\re)\| \asymp n^{\varepsilon}\, ,
\]
or with other words 
\[
\| f_n|W^2_1 (\re)\| \asymp n^{\varepsilon}\, ,
\]
Now we proceed by using
\[
(L_1(\re), W^2_1 (\re))_{1/2, 1} = B^1_{1,1} (\re)\, .
\]

	$$ \begin{tikzpicture}  
	 \fill (0,0) circle (1.5pt);
	 \draw[->, ](0,0) -- (6.8,0);
	 \draw[->, ] (0,0) -- (0,3);
	 \draw (2,2.0) -- (4,2.0);
	 \draw (-0.05,2.0) -- ( 0.05, 2.0);
	 \node[below] at (0,0) {$0$};
	 \node [below] at (4,0) {$1$};
	 \node [left] at (0,2.0) {$1$};
	 \node [below] at (6,0) {$\frac{3}{2}$};
	 \node [below] at (6,0) {$\frac{3}{2}$};
	 \node [below] at (2,0) {$\frac{1}{n}$};
	 \node [below] at (1,0) {$\frac{1}{2n}$};
	 \node [left] at (0,2.8) {$\phi_n(t)$};
\node [below] at (6.8,0) {$t$};
	 \draw (6, -0.05) -- (6, 0.05);
	 \draw (4, -0.05) -- (4, 0.05);
	 \draw (2, -0.05) -- (2, 0.05);
	 \draw (1, -0.05) -- (1, 0.05);
	 \draw (1,0) .. controls (1.4,0) and (1.6, 2.0).. (2, 2.0);
	 \draw (4,2.0) .. controls (4.6,2.0) and (5.4, 0).. (6, 0);
	 \node [right] at (2.0,-1) {Figure 2};
	 \end{tikzpicture}$$
Repeating the above arguments we can prove the claim also in this situation.
\qed
\vskip 3mm
To characterize $M(S^r_{p,p}B(\R))$ we need the so-called localization property for the  Besov spaces $S^r_{p,p}B(\R)$.
For it's proof we need another characterization by  differences. 
This time we shall work with pure differences (not with associated moduli of smoothness). 

\begin{lemma}\label{red}
Let $ 1\le p \le \infty$ and $r>0$. Let $m \in \N$ be a natural number such that $m>r$.
A function $f \in L_p (\R)$ belongs to $S^r_{p,p}B(\R)$ if and only if 
\[
  T_e:= \bigg\{\int_{[-1,1]^{|e|}} \prod_{i \in e} |h_i|^{-rp} \big\|  \Delta_{  h}^{\bar{m},e} f(\cdot) \big|
L_p (\R)\big\|^p \prod_{i \in e} \frac{dh_i}{|h_i|}\bigg\}^{1/p} <\infty\, ,
 \]
for all $e\subset [d]$. It follows that 
\[
\| f |L_p(\R)\| + \sum_{e\subset [d], e \neq \emptyset} T_e
\]
is an equivalent norm on $S^r_{p,p}B(\R)$.
\end{lemma}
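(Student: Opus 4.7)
The goal is to show that the pure-difference quantity $\|f|L_p(\R)\| + \sum_{e \neq \emptyset} T_e$ is equivalent to the modulus-based norm $\|f|S^r_{p,p}B(\R)\|^{(\bar m)}$ from Lemma~\ref{diff1} (applied with the constant difference vector $\bar m$, chosen with $m>r$). By that lemma, the modulus-based norm is already equivalent to the defining one. It therefore suffices to prove, for each nonempty $e\subset [d]$, the equivalence
\[
T_e^p \;\asymp\; A_e \;:=\; \sum_{k \in \N_0^d(e)} 2^{r|k|_1 p}\, \omega_{\bar m}^e(f, 2^{-k})_p^p,
\]
and then to sum over $e$.

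The easy direction $T_e^p \lesssim A_e$ is handled by dyadic decomposition of the integration domain. Split $[-1,1]^{|e|}$ into the shells
\[
D_k := \Big\{h \in \re^{|e|}:\; 2^{-k_i-1} \le |h_i| \le 2^{-k_i},\; i \in e\Big\}, \qquad k \in \N_0^d(e).
\]
On $D_k$ one has $\prod_{i \in e}|h_i|^{-rp-1} \asymp 2^{(rp+1)|k|_{1,e}}$ and $|D_k|\asymp 2^{-|k|_{1,e}}$, while by the very definition of the modulus, $\|\Delta_h^{\bar m,e} f|L_p(\R)\| \le \omega_{\bar m}^e(f, 2^{-k})_p$ pointwise in $h\in D_k$. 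Integrating against the weight and summing over $k \in \N_0^d(e)$ yields the bound.

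The reverse inequality $A_e \lesssim T_e^p$ is the main obstacle, since we must pass from suprema (the moduli) to averages (the weighted integral $T_e$); naive pointwise sup-versus-average estimates go the wrong way. I route through the Fourier characterization of Proposition~\ref{def-do}(ii): write $f = \sum_\ell f_\ell$ with $f_\ell := \cfi[\varphi_\ell \cf f]$, and aim to show the converse estimate
\[
\sum_\ell 2^{r|\ell|_1 p}\|f_\ell|L_p(\R)\|^p \;\lesssim\; \|f|L_p(\R)\|^p + \sum_{e' \neq \emptyset}T_{e'}^p.
\]
For each $\ell \in \N_0^d$, set $e(\ell) := \{i:\ell_i \ge 1\}$; when $e(\ell)\neq\emptyset$, a Bernstein-type lower bound for the spectrally localized $f_\ell$ furnishes $h^{(\ell)}$ with $|h^{(\ell)}_i|\asymp 2^{-\ell_i}$ for $i\in e(\ell)$ such that $\|\Delta_{h^{(\ell)}}^{\bar m, e(\ell)} f_\ell|L_p\| \gtrsim \|f_\ell|L_p\|$. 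The cross-terms $\|\Delta_h^{\bar m, e(\ell)} f_{\ell'}|L_p\|$ with $\ell' \neq \ell$ are controlled by Lemma~\ref{ddim-1}, whose factor $\prod_{i \in e(\ell)} \min\{1, |2^{\ell'_i}h_i|^m\}$ produces geometric decay in $|\ell'-\ell|$ (using $0 < r < m$) and so can be absorbed. Integrating this pointwise-in-$h$ estimate over $h\in D_\ell$ against $\prod_{i \in e(\ell)}|h_i|^{-rp-1}$, and using the one-dimensional identity $\int_{2^{-\ell_i-1}}^{2^{-\ell_i}}|h|^{-rp-1}dh\asymp 2^{\ell_i rp}$ in each $i\in e(\ell)$, converts the contribution of $\ell$ into $\asymp 2^{r|\ell|_1 p}\|f_\ell|L_p\|^p$; summing over those $\ell$ with $e(\ell)=e$ gives a lower bound of $T_e^p$ on this partial Littlewood-Paley sum, and the full sum is recovered by varying $e$. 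Proposition~\ref{def-do}(ii) combined with Lemma~\ref{diff1} then delivers $A_e \lesssim T_e^p$.

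The principal technical difficulty is coupling the Bernstein lower bound with cross-term control: spectral disjointness of the $f_\ell$ via $\varphi_\ell$ together with the multiplicative decay from Lemma~\ref{ddim-1} are what ensure that the continuous integral $T_e$ captures the same dyadic Littlewood-Paley energy as the discrete modulus sum $A_e$, direction by direction.
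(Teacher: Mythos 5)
The paper does not actually prove this lemma: it cites \cite{U1} for the version with $h$-integration over $(0,\infty)$ and declares the reduction to $(0,1]$ standard. So your attempt at a self-contained argument is welcome, and its architecture (reduce to the modulus norm of Lemma~\ref{diff1}; dyadic shells $D_k$ for $T_e^p\ls A_e$; Littlewood--Paley for the converse) is the right one. The direction $T_e^p\ls A_e$ is correct as written.

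The converse direction has a genuine gap in the cross-term absorption. You propose to lower-bound $\|\Delta_h^{\bar m,e(\ell)}f\|_p$ by $\|\Delta_h^{\bar m,e(\ell)}f_\ell\|_p$ minus $\sum_{\ell'\neq\ell}\|\Delta_h^{\bar m,e(\ell)}f_{\ell'}\|_p$ and to control the latter via Lemma~\ref{ddim-1}. But the factor $\prod_{i\in e(\ell)}\min\{1,|2^{\ell_i'}h_i|^{m}\}$ with $|h_i|\asymp 2^{-\ell_i}$ equals $\prod_i\min\{1,2^{(\ell_i'-\ell_i)m}\}$, which decays only when $\ell_i'<\ell_i$; for the high-frequency blocks $\ell_i'>\ell_i$ it is $\asymp 1$, so there is no "geometric decay in $|\ell'-\ell|$" and the sum $\sum_{\ell_i'>\ell_i}\|f_{\ell'}\|_p$ is in no sense small relative to $\|f_\ell\|_p$; the absorption cannot close (and bootstrapping does not help, since the offending coefficients are $O(1)$, not $o(1)$). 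The standard repair, which is what \cite{U1} does, avoids cross terms entirely: since $\cf(\Delta_h^{\bar m,e}f)(\xi)=\prod_{i\in e}(e^{ih_i\xi_i}-1)^{m}\,\cf f(\xi)$ and, for $h_i\in[2^{-\ell_i-1},2^{-\ell_i}]$ and $\xi_i\in\supp\varphi_{\ell_i}$, one has $h_i\xi_i\in[\tfrac14,\tfrac32]$ in modulus so that $|e^{ih_i\xi_i}-1|\ge c>0$, the function $M_{\ell,h}(\xi):=\varphi_\ell(\xi)\prod_{i\in e(\ell)}(e^{ih_i\xi_i}-1)^{-m}$ is a Fourier multiplier with $\|\cfi M_{\ell,h}\|_{L_1}$ bounded uniformly in $\ell$ and in such $h$. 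Hence $f_\ell=\cfi[M_{\ell,h}\,\cf(\Delta_h^{\bar m,e(\ell)}f)]$ gives directly $\|f_\ell\|_p\le C\,\|\Delta_h^{\bar m,e(\ell)}f\|_p$ for \emph{every} $h$ in the positive part of the shell $D_\ell$ (all $1\le p\le\infty$ by Young's inequality); raising to the $p$-th power, averaging over that shell against $\prod_i|h_i|^{-rp-1}dh_i$, and summing over $\ell$ with $e(\ell)=e$ yields $\sum_{\ell}2^{r|\ell|_1p}\|f_\ell\|_p^p\ls \|f\|_{L_p}^p+\sum_{e\neq\emptyset}T_e^p$ as desired. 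Replacing your Bernstein-plus-absorption step by this multiplier inversion makes the proof correct.
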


\begin{remark}\rm
 A proof of a slightly modified statement (integration with respect  to the components $t_i$ is taken on $(0,\infty)$, not on $(0,1]$)
can be found in \cite{U1}. The reduction to the case considered in Lemma \ref{red} can be done by standard arguments, we omit details. 
\end{remark}

\begin{proposition}\label{F-case}
Let $1\leq p \leq \infty$ and $r>0$. 
Let $\psi$ be a non-negative $C_0^{\infty}(\R)$ function such that
\be\label{ws-010}
\sum_{\mu\in \Z} \psi(x-\mu) = 1\qquad  \text{ for all}\ x\in \R\,.
\ee
 We put $\psi_{\mu}(x):=\psi(x-\mu)$, $\mu\in \Z,\ x\in \R$.
Then
\beqq
\|f|S^r_{p,p}B(\R)\|\asymp \Big(\sum_{\mu\in \Z} \| \psi_{\mu}f|S^r_{p,p}B(\R)\|^p\Big)^{1/p}\,
\eeqq 
holds for all $f\in S^r_{p,p}B(\R)$ (usual modification for $p=\infty$).
\end{proposition}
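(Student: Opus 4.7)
The plan is to use the characterization of $\|f|S^r_{p,p}B(\R)\|$ by pure (non-averaged) differences provided by Lemma~\ref{red} and to establish both inequalities in
\[
\|f|S^r_{p,p}B(\R)\| \;\lesssim\; \Bigl(\sum_{\mu\in \Z} \| \psi_{\mu}f|S^r_{p,p}B(\R)\|^p\Bigr)^{1/p} \;\lesssim\; \|f|S^r_{p,p}B(\R)\|,
\]
separately. The case $p=\infty$ is handled by the obvious modification (replacing $\ell_p$-sums by suprema).

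For the first inequality I would write $f=\sum_\mu \psi_\mu f$ and use the compact support of $\psi$ to conclude that, for every $x\in \R$ and every $h\in[-1,1]^d$, at most $N=N(\supp\psi,m)$ indices $\mu$ can produce $\Delta_h^{\bar m,e}(\psi_\mu f)(x)\neq 0$. A pointwise application of the discrete Jensen inequality then yields $|\Delta_h^{\bar m,e}f(x)|^p\le N^{p-1}\sum_\mu|\Delta_h^{\bar m,e}(\psi_\mu f)(x)|^p$. Integrating over $x$, multiplying by the $e$-weight $\prod_{i\in e}|h_i|^{-rp-1}$, integrating in $h$ and summing over $e\subset [d]$ (together with the analogous bounded-overlap bound on the $L_p$-part) gives the upper estimate.

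The reverse inequality is the essential content. I would expand each $\Delta_h^{\bar m,e}(\psi_\mu f)$ by the multivariate Leibniz rule for differences, exactly as in \eqref{mot}:
\[
\Delta_h^{\bar m,e}(\psi_\mu f)(x)=\sum_{u\in \N_0^d(e),\, u\le \bar m}\binom{\bar m}{u}\,\Delta_h^{\bar m-u,e}\psi_\mu(x+u\diamond h)\,\Delta_h^{u,e}f(x).
\]
The extremal term $u=\bar m$ contributes $\psi_\mu(\cdot+\bar m\diamond h)\,\Delta_h^{\bar m,e}f$; after summation over $\mu$ the finite-overlap bound $\sum_\mu|\psi_\mu|^p\le C$ reconstructs the $e$-block of $\|f|S^r_{p,p}B(\R)\|^p$ exactly. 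For the remaining terms the smoothness and compact support of $\psi$ give
\[
|\Delta_h^{\bar m-u,e}\psi_\mu(x+u\diamond h)|\le C\prod_{\substack{i\in e\\ u_i<m}}|h_i|^{m-u_i}\,\mathbf{1}_{K_\mu^*}(x),
\]
with a translate $K_\mu^*=\mu+K^*$ of a fixed bounded set still satisfying $\sum_\mu\mathbf{1}_{K_\mu^*}\le C$. After the $\mu$-sum, the contribution of a fixed $u\neq\bar m$ reduces to a weighted integral of $\|\Delta_h^{u,e}f|L_p(\R)\|^p$ whose exponent on $|h_i|$ is $p(m-u_i-r)-1$ in each direction $i\in e$.

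\textbf{The main obstacle} lies in the directional analysis of those indices $i$ with $0<u_i<m$. Splitting $e$ according to whether $u_i\in\{0\}$, $\{1,\dots,m-1\}$, or $\{m\}$, the cases $u_i=0$ (integrand independent of $h_i$, weight integrable since $m>r$) and $u_i=m$ (standard Besov weight) are straightforward. For the intermediate case, the one-dimensional weighted integral is a Besov-type quantity of smoothness $s=r-(m-u_i)\in[0,r)$ in direction $i$, realised with $u_i$-th order differences and $u_i>s$; by Lemma~\ref{diff1} (applied with mixed-order exponents) it is controlled by the direction-$i$ Besov norm of smoothness $r$, hence by $\|f|S^r_{p,p}B(\R)\|$. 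Choosing $m$ so large that $s>0$ in every such direction eliminates the critical value $s=0$ whenever $r\notin\N$; for integer $r$ the borderline $u_i=m-r$ is handled by an auxiliary Marchaud-type inequality converting the divergent weight into one involving higher-order differences. Summation over $u$, $e$, and $\mu$ then yields the desired bound.
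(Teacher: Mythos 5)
Your overall architecture coincides with the paper's: the easy direction by bounded overlap of the $\psi_\mu$ plus Jensen, the hard direction by the Leibniz rule for mixed differences \eqref{mot} applied to $\psi_\mu f$, a case analysis on the multi-index $u$, and the observation that $\sum_\mu \mathbf{1}_{\supp\psi_\mu^*}\lesssim 1$ lets you trade the $\mu$-sum for the global $L_p$-norm. The one substantive divergence is your choice of difference order: you expand $\Delta_h^{\bar m,e}(\psi_\mu f)$ with $r<m\le r+1$, whereas the paper expands $\Delta_h^{2\bar m,e}(\psi_\mu f)$. This is not cosmetic. With order $2m$, every Leibniz term falls into one of two clean cases per direction: if $u_i<m$ the difference landing on $f$ has order $2m-u_i>m>r$, so it is directly dominated by the modulus entering the $S^r_{p,p}B$-norm (Lemma \ref{red} applies with any order exceeding $r$, by Lemma \ref{diff1}); if $u_i\ge m$ the factor $\Delta_h^{u,e}\psi_\mu$ contributes $\prod|h_i|^{r+\varepsilon}$ via $\|\psi|S^{r+\varepsilon}_{\infty,\infty}B\|$, which absorbs the weight $|h_i|^{-rp}$ and leaves only $\|f|L_p\|$ to estimate. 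There is no intermediate regime, hence no need for mixed-order embeddings or Marchaud inequalities. Your "main obstacle" is therefore self-inflicted, and the doubling of the difference order (legitimate precisely because of Lemma \ref{diff1}) is the device that removes it.

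As written, your treatment of the intermediate case is the weak point. First, the remark "choosing $m$ so large that $s>0$ in every such direction" is backwards: $s=u_i-(m-r)$ decreases as $m$ grows, and $u_i$ ranges down to $1$, so enlarging $m$ produces more negative exponents, not fewer critical ones; the correct statement is simply that $s=0$ can occur only when $r\in\N$, for any admissible $m$. Second, for $0<s<u_i$ you invoke "Lemma \ref{diff1} with mixed-order exponents", but the quantity you must control is a dominating-mixed seminorm with a smoothness \emph{vector} $(s_i)_{i\in e}$ strictly below $r$ in some coordinates; bounding it by $\|f|S^r_{p,p}B(\R)\|$ requires an embedding between vector-smoothness spaces together with their difference characterizations, which is standard but not contained in the lemma you cite. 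Third, for integer $r$ the endpoint $s=0$ is not a Besov seminorm at all, and the Marchaud-plus-Hardy argument you gesture at must be carried out coordinate-wise inside a mixed norm while the remaining coordinates retain their weights; this is a genuine piece of work that your proposal only asserts. All of these difficulties evaporate if you adopt the order-$2\bar m$ expansion.
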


\begin{proof} We prove for $1\leq p<\infty$. The proof for $p=\infty$ is modification.\\
{\it Step 1.} We shall prove that 
\be \label{<<}
\|f|S^r_{p,p}B(\R)\| \lesssim   \Big(\sum_{\mu\in \Z} \| \psi_{\mu}f|S^r_{p,p}B(\R)\|^p\Big)^{1/p} \,
\ee 
holds for all $f\in S^r_{p,p}B(\R)$. Again we shall work with the characterization by differences.
Let $m$ be a natural number such that $r<m\leq r+1$. Then, applying \eqref{ws-010}, the compactness of the support of $\psi$ and 
$|h|_\infty \le 1$,  we conclude
\beqq 
\|f|S^r_{p,p}B(\R)\|^p 
&\lesssim &   \sum_{e\subset [d]}  \sum_{k\in \N_0^d(e)} 2^{r|k|_1p}\sup_{|h_i|< 2^{-k_i}, i\in e}
\Big\|  \sum_{\mu\in \Z}|\Delta_h^{m}(f\psi_{\mu})(\cdot)|\Big|L_p(\R)\Big\|^p 
\\
&\lesssim & \sum_{e\subset [d]}  \sum_{k\in \N_0^d(e)} 2^{r|k|_1p}\sup_{|h_i|< 2^{-k_i}, i\in e}   
\sum_{\mu\in \Z}\big\|\Delta_h^{m}(f\psi_{\mu})(\cdot) \big|L_p(\R)\big\|^p 
\\
& \lesssim & \sum_{\mu \in \Z} \| \psi_{\mu}f|S^r_{p,p}B(\R)\|^p.
\eeqq
This proves \eqref{<<}.\\
{\it Step 2.} We shall prove the reverse direction to \eqref{<<}. In some sense we will follow the same strategy as in proof of Theorem \ref{main-be}.
Within this step we will use the characterization of $S^r_{p,p}B(\R)$ given in Lemma  \ref{red}.
\\
{\it Substep 2.1.} Some preparations. Let $r <m\leq r+1$. 
Clearly, in case $e=\emptyset$ we have 
\beqq
\sum_{\mu\in \Z} \|f\psi_{\mu}   |L_p(\R) \|^p \, 
 \lesssim  \|f|L_p(\R)\|^p  \lesssim  \| f|S^r_{p,p}B(\R)\|^p.
\eeqq 
For $e\subset [d]$,  $e\not=\emptyset$ we use
\[
\Delta_{h}^{2{\bar{m}},e}(f \cdot \psi_\mu)(x)= 
\sum_{ u \in \N_0^d(e),\,|u|_\infty\leq 2m} \binom{2\bar{m}}{u}\, \Delta_{h}^{2 \bar{m} - u ,e}f(x+ u  \diamond h)
\Delta_{h}^{ u ,e} \psi_\mu (x)\,,\quad\, x, h\in \R\, , 
\] 
see \eqref{mot}. Recall $2\bar{m}-u :=(2m-u_1,...,2m-u_d)$. 
It remains to  estimate  the terms
\beqq
S_{e,u}:=  \bigg\{\sum_{\mu\in \Z} \int_{[-1,1]^{|e|}} \prod_{i \in e} |h_i|^{-rp} \big\|  \Delta_{h}^{2 \bar{m} - u ,e}f(\cdot+ u  \diamond h)
\Delta_{h}^{ u ,e} \psi_\mu (\cdot)\big|
L_p (\R)\big\|^p \prod_{i \in e} \frac{dh_i}{|h_i|}\bigg\}^{1/p}
.
\eeqq 
This will be done by using the same splitting into various cases as done  in the proof of Theorem \ref{main-be}.
\\
{\it Substep 2.2.} The case $u_i<m$ for all $i\in e$. 
By assumption $\psi$ has compact support 
and therefore $\supp \psi_\mu$ is contained in a cube $Q(\mu,c)$ with center in $\mu$ and sidelength  $c>0$. 
Because of $|h|_\infty \le 1$  we find 
\be\label{=0}
|\Delta_{h}^{ u ,e}\psi_{\mu}(x)|=0 \qquad \mbox{if}\qquad \|x-\mu\|_\infty  > R:=c + 2m\, .
\ee 
Obviously it holds
\be\label{u<m}
|\Delta_{h}^{2\bar{m} - u ,e}f(x+ u  \diamond h)\Delta_{h}^{ u ,e}\psi_{\mu}(x)|
\lesssim  \| \psi |C(\R)\| \,  |\Delta_{h}^{2\bar{m} - u ,e}f(x+ u  \diamond h)|\, .
\ee
Combining \eqref{u<m} and \eqref{=0} we derive 
\beqq
S_{e,u} &\lesssim &  
\bigg\{
\int_{[-1,1]^{|e|}} \prod_{i \in e} |h_i|^{-rp}  \sum_{\mu\in \Z} \big\|  \Delta_{  h}^{  2\bar{m} - u ,e}
f(\cdot+ u  \diamond h)\big|L_p (Q(\mu, R ))\big\|^p \prod_{i \in e} \frac{dh_i}{|h_i|}\bigg\}^{1/p}
\nonumber
\\
&\lesssim &  
\bigg\{
\int_{[-1,1]^{|e|}} \prod_{i \in e} |h_i|^{-rp}   \big\|  \Delta_{  h}^{  2\bar{m} - u ,e}
f(\cdot+ u  \diamond h)\big|L_p (\R)\big\|^p \prod_{i \in e} \frac{dh_i}{|h_i|}\bigg\}^{1/p}
\nonumber
\\
&\lesssim & \|f|S^r_{p,p}B(\R)\| \, .
\eeqq 
{\it Substep 2.3.} The case $u_i\geq m$ for all $i\in e$. Let $0 < \varepsilon < m-r$.
Directly from the definition of the spaces $S^{r+\varepsilon}_{\infty,\infty}B (\R)$ we derive the inequality 
\be\label{u>m}
\prod_{i \in e} | h_i|^{-(r+\varepsilon)} |\Delta_{h}^{ u ,e} \psi_{\mu}(x)|
\leq  \| \psi_\mu  |S^{r+\varepsilon}_{\infty,\infty}B(\R)\| = \| \psi  |S^{r+\varepsilon}_{\infty,\infty}B(\R)\|\, .
\ee
This inequality, combined with \eqref{=0}, results in 
\beq \label{three}
S_{e,u} &\lesssim &  
\bigg\{
\int_{[-1,1]^{|e|}} \prod_{i \in e} |h_i|^{\varepsilon p} \,  \sum_{\mu\in \Z} \big\| \Delta_{ h}^{  2\bar{m} - u ,e}f(\cdot+ u  \diamond h)
\big|L_p (Q(\mu, R))\Big\|^p \prod_{i \in e} \frac{dh_i}{|h_i|}\bigg\}^{1/p}
\nonumber
\\
&\lesssim &  \|f|L_{p}(\R)\|
\bigg\{
\int_{[-1,1]^{|e|}} \prod_{i \in e} \frac{dh_i}{|h_i|^{1-\varepsilon p}}\bigg\}^{1/p}
\nonumber
\\
&\lesssim &  \|f|S^r_{p,p}B(\R)\|\, .
\eeq 
{\it Step 3.} 
 The remaining cases. Let there exist $L ,N \in \N$ such that
 $e= \{1,2,\ldots\, , N\}$,  $u \in \N_0^d(e)$  and 
 \[
  u := (u_1, \ldots \,, u_L,    u_{L+1},...,u_N,0,\ldots,0)
 \]
 with 
 \[
 m  \le  u_i \le 2 m\, , \quad i = 1, \ldots\, ,  L,\qquad 0\leq   u_i < m, \quad i = L+1, \ldots\, ,  N, 
 \]
 and $L  < N \le d$. By assuming $|u|_\infty > m $ we cover all remaining cases up to an enumeration.
Let $e_1 := \{1, \ldots , L\}$ and $e_2:= e \setminus e_1$. 
As in \eqref{u>m} we conclude 
\beqq
|\Delta_{h}^{ u ,e}\psi_{\mu}(x)| & \lesssim  & \sup_{x \in \R} |\Delta_{h}^{ u ,e_1}\psi_{\mu}(x)|
\leq   \| \psi  |S^{r+\varepsilon}_{\infty,\infty}B(\R)\|\, 
\prod_{i \in e_1} | h_i|^{r+\varepsilon} \, .
\eeqq 
In a similar way as in \eqref{three} we obtain
\beqq
S_{e,u} &\lesssim&   
\bigg\{
\int_{[-1,1]^{|e|}} \Big(\prod_{i \in e} |h_i|^{-r}   \prod_{i \in e_1} |h_i|^{r+\varepsilon} \Big)^p
\sum_{\mu\in \Z} \big\|  \Delta_{  h}^{  2\bar{m} - u ,e}f(\cdot+ u  \diamond h)
\big| L_p (Q(\mu, R ))\big\|^p \prod_{i \in e} \frac{dh_i}{|h_i|}\bigg\}^{1/p}
\\
 &\lesssim &  \bigg\{
 \int_{[-1,1]^{|e|}}\Big( \prod_{i \in e_2} |h_i|^{-r}  \prod_{i \in e_1} |h_i|^{\varepsilon} \Big)^p
  \big\|  \Delta_{  h}^{  2\bar{m} - u ,e}f(\cdot+ u  \diamond h)
 \big| L_p (\R)\big\|^p \prod_{i \in e} \frac{dh_i}{|h_i|}\bigg\}^{1/p}.
\eeqq
Next we apply the elementary inequality 
\[
 \big\| \Delta_{ h}^{  2\bar{m} - u ,e}f(\cdot+ u  \diamond h)
\big| L_p (\R)\big\|
\lesssim \big\|  \Delta_{ h}^{m ,e_2}f(\cdot )
\big| L_p (\R)\big\|
\]
since $2m-u_i \ge m$ if $i \in e_2$.
Hence,  we get 
\beqq
S_{e,u} &\lesssim &
\bigg\{
 \int_{[-1,1]^{|e_2|}} \prod_{i \in e_2} |h_i|^{-rp}  
  \big\|  \Delta_{ h}^{m ,e_2}f(\cdot )
  \big| L_p (\R)\big\|^p \prod_{i \in e} \frac{dh_i}{|h_i|}\bigg\}^{1/p}\bigg\{
  \int_{[-1,1]^{|e_1|}}    \prod_{i \in e_1} \frac{dh_i}{|h_i|^{1-\varepsilon p}}\bigg\}^{1/p}
\nonumber
\\
& \lesssim & \|f|S^r_{p,p}B(\R)\|\, .
\eeqq
as a consequence of Lemma \ref{red}. This finishes the proof.\end{proof}

\vskip 0.3cm
\noindent
{\bf Proof of Theorem \ref{mul-spaceb}}.
By employing Proposition \ref{F-case}, Theorem \ref{main-be} and similar arguments 
as in the proof of Theorem \ref{mul-spacew} one obtains the claimed 
identity $M(S^r_{p,p}B(\R))=S^r_{p,p}B(\R)_{\unif}$. 
\qed

%&&&&&&&&&&&&&&&&&&&&&&&&&&&&&&&&&&&&&&&&&&&&&&&&&&&&&&&&&&&&&&&&&&&&&&&&&&&&&&&&&&&
%&&&&&&&&&&&&&&&&&&&&&&&&&&&&&&&&&&&&&&&&&&&&&&&&&&&&&&&&&&&&&&&&&&&&&&&&&&&&&&&&&&&

\subsection{Proof of the results in Section \ref{main3}}

%&&&&&&&&&&&&&&&&&&&&&&&&&&&&&&&&&&&&&&&&&&&&&&&&&&&&&&&&&&&&&&&&&&&&&&&&&&&&&&&&&&&
%&&&&&&&&&&&&&&&&&&&&&&&&&&&&&&&&&&&&&&&&&&&&&&&&&&&&&&&&&&&&&&&&&&&&&&&&&&&&&&&&&&&

By definition the positive results (sufficient conditions) carry over to the local case.
Concerning the necessary conditions it remains to observe that all test functions used in this context 
for the proof on $\R$ had compact support. From these remarks Theorem \ref{main-be-1}  and 
Theorem~\ref{negativec} follow.\\
Concerning the proof of Theorem \ref{mul-spacew} we remark that 
the embedding of $S^m_p W (\Omega) \hookrightarrow M(S^m_pW (\Omega))$ follows from the algebra property.
If we assume $f \in M(S^m_pW (\Omega)$ we conclude that
\[
 \| f\, \cdot \, g| S^m_p W (\Omega)\| \le c \, \|g|S^m_p W (\Omega)\|
\]
holds for all $g \in S^m_p W (\Omega)$. But the function $g =1$ belongs to $S^m_p W (\Omega)$. Hence, $f$
must be an element of $S^m_p W (\Omega)$.
Similarly we argue in case  of $S^r_{p,p}B(\Omega)$.
\qed

%&&&&&&&&&&&&&&&&&&&&&&&&&&&&&&&&&&&&&&&&&&&&&&&&&&
%&&&&&&&&&&&&&&&&&&&&&&&&&&&&&&&&&&&&&&&&&&&&&&&&&&


\begin{thebibliography}{9999}


%&&&&&&&&&&&&&&&&&&&&&&&&&&&&&&&&&&&&&&&&&&&&&&&&&&
%&&&&&&&&&&&&&&&&&&&&&&&&&&&&&&&&&&&&&&&&&&&&&&&&&&




%\bibitem{Am}
%{\sc T.I.~Amanov},
%{\it Spaces of differentiable functions with dominating mixed derivatives}.
%Nauka Kaz. SSR, Alma-Ata, 1976.





\bibitem{BL}
{\sc J.~Bergh and J.~L\"ofstr\"om}, {\em Interpolation Spaces. An Introduction.} Springer, New York, 1976.



\bibitem{BG}
{\sc H.-J. Bungartz and M.~Griebel}, Sparse grids, {\it Acta Numer.} {\bf 13} (2004), 147--269.



%\bibitem{DD} 
%{\sc  S.~Dahlke and  R.~DeVore},    
% Besov regularity for elliptic boundary value problems.
% \emph{Communications in  PDE} \textbf{22} (1997), 1--16.



%\bibitem{Devo93}
%R.~A. DeVore and G.~G. Lorentz, {\em Constructive Approximation}, Springer-Verlag, New York,
%1993.




\bibitem{Hansen}
{\sc M.~Hansen}, Nonlinear approximation and function spaces of dominating mixed smoothness. Phd thesis, 
Friedrich-Schiller-University Jena, Jena, 2010.



%\bibitem{HS1}
%{\sc M.~Hansen and W.~Sickel},
%Best $m$-term approximation and tensor products of Sobolev and Besov spaces --
%the case of non-compact embeddings.
%{\it East J. Approx.} {\bf 16} (2010), 313 - 356. 



%\bibitem{HS2}
%{\sc M.~Hansen and W.~Sickel},
%Best $m$-term approximation and tensor products of Sobolev and Besov spaces --
%the case of non-compact embeddings.
%{\it Constr. Approx.} {\bf 36} (2012), 1 - 51.



%\bibitem{HV}
%{\sc M.~Hansen and J.~Vybiral},
%The Jawerth-Franke embedding of spaces with dominating mixed smoothness. {\it Georgian Math. J.} {\bf 16} (2009), no. 4, 667–682. 
%


%
%\bibitem{kmm}
%{\sc N.~Kalton, S.~Mayboroda and M.~Mitrea}, 
%Interpolation of Hardy-Sobolev-Besov-Triebel-Lizorkin Spaces and Applications to Problems in Partial Differential Equations, 
%{\it Contemp. Math.} {\bf 445}, Amer. Math. Soc., 2007. 



%\bibitem{LL}
%{\sc E.H.~Lieb and  M.~Loss}, \textit{Analysis. Graduate Studies in Mathematics, 14.}
%American Mathematical Society, Providence, RI, 1997.



\bibitem{LiCh}
{\sc W.A. Light and E.W. Cheney}, {\it Approximation theory in tensor product spaces}, Lecture
Notes in Math. {\bf 1169}, Springer, Berlin, 1985.



\bibitem{Li1}
{\sc P.I.~Lizorkin}, {On Fourier multipliers in the spaces $L_{p,\theta}$}, 
{\it Trudy Mat. Inst. Steklova}
{\bf 89} (1967), 231-248.


\bibitem{Liz}
{\sc P. I. Lizorkin}, Multipliers of Fourier integrals and bounds of convolution in
spaces with mixed norms. Applications. Izv. Akad. Nauk SSSR Ser. Mat. Tom {\bf 34}
(1970), No. 1, 225-256.


\bibitem{Li}
{\sc P.I.~Lizorkin}, {On the theory of Fourier multipliers}, 
{\it Trudy Mat. Inst. Steklova}
{\bf 173} (1986), 149-163.


%\bibitem{lun}
%{\sc A.~Lunardi}, {\it Interpolation theory}. 
%Lect. Notes, Scuola Normale Superiore Pisa, 2009.


\bibitem{lki}
{\sc L.~Markhasin, V.K.~Nguyen and I.~Steinwart},
Support vector machines for Sobolev function regression over deterministic cubature rules,
work in progress, Stuttgart, Jena, 2016.

\bibitem{MS1} 
{\sc V.G.~Maz'ya and T.O.~Shaposhnikova}, {Theory of
multipliers in spaces of differentiable functions}, Pitman, Boston, 1985.



\bibitem{MS2} 
{\sc V.G.~Maz'ya and T.O.~Shaposhnikova}, {Theory of Sobolev 
multipliers with applications to differential and integral operators}, Springer, Berlin, 2009.



%\bibitem{MM}
%{\sc O.~Mendez and M.~Mitrea}, The Banach envelopes of Besov and Triebel--Lizorkin spaces and applications to 
%partial differential equations, 
%JFAA {\bf 6}(5) (2000), 503-531.



\bibitem{Mo} %S
{\sc J.~Moser}, {A rapidly convergent iteration method and
non-linear differential equations. I.} 
Ann. Scuola Norm. Sup. Pisa {\bf 20} (1966), 265-315.
{\sl II.} ibid. {\bf 20} (1966), 499-535.



\bibitem{Ni}
{\sc S.M.~Nikol'skij}, 
{\it Approximation of Functions of Several Variables and Imbedding
Theorems}. Springer, Berlin, 1975.

\bibitem{NUU}
{\sc V.K. Nguyen, M. Ullrich and T. Ullrich}, Change of variable in spaces of mixed smoothness and numerical integration of
multivariate functions on the unit cube, submitted, http://arxiv.org/abs/1511.02036.






\bibitem{NoWo08}
{\sc E.~Novak and H.~Wo{\'z}niakowski}, Tractability  of multivariate problems. Volume I: Linear information.
EMS, Z\"urich, 2008.




\bibitem{NoWo10}
{\sc E.~Novak and H.~Wo{\'z}niakowski}, Tractability  of multivariate problems. Volume II: Standard information
for functionals.
EMS, Z\"urich, 2010.


\bibitem{NoWo12}
{\sc E.~Novak and H.~Wo{\'z}niakowski}, Tractability  of multivariate problems. Volume III: Standard information for operators.
EMS, Z\"urich, 2012.






\bibitem{Pe}
{\sc J.~Peetre},
{\it New thoughts on Besov spaces.} Duke Univ. Press, Durham, 1976.


\bibitem{RS}
{\sc T. Runst and W. Sickel}, {\it
Sobolev Spaces of Fractional Order, Nemytskij Operators,
and Nonlinear Partial Differential Equations}.
de Gruyter Series in Nonlinear Analysis and Applications 3,
Walter de Gruyter \& Co., Berlin, 1996.


%\bibitem{Ry}
%{\sc V.~S. Rychkov}, On restrictions and extensions of the Besov and
%Triebel-Lizorkin spaces with respect to Lipschitz domains,
%{\it J. London Math. Soc.} (2) {\bf 60} (1999), 237-257.

 

%\bibitem{Sc2}
%{\sc H.-J. Schmeisser},
%Recent developments in the theory of function spaces with dominating mixed smoothness. In: {\it 
%Proc. Conf. NAFSA-8}, Prague 2006, (ed. J.~Rakosnik),
%Inst. of Math. Acad. Sci., Czech Republic, Prague, 2007, pp.~145-204.



%\bibitem{SS}
%{\sc H.-J. Schmeisser and W. Sickel},
%Spaces of functions of mixed  smoothness and their relations to approximation from hyperbolic crosses. 
%{\it JAT} {\bf 128}, 115-150.


\bibitem{ST} 
{\sc H.-J. Schmeisser, H. Triebel}, {\it Topics in Fourier
analysis and function spaces.}
 Geest \& Portig, Leipzig, 1987 and Wiley, Chichester, 1987.


 
\bibitem{SST08} {\sc Ch. Schwab, E. S\"uli and R.A. Todor}, Sparse finite element approximation of high-dimensional
transport-dominated diffusion problems, 
{\it ESAIM: Mathematical Modelling and Numerical Analysis} {\bf 42} (05) (2008), 777--819.


 
 
% 
%\bibitem{sitr} %S
%{\sc W. Sickel and H. Triebel}, H\"older inequalities and sharp embeddings
%in function spaces of $B^s_{p,q}$ and $F^s_{p,q}$ type. {\it  Z. Anal. Anwendungen} {\bf 14} (1995), 105-140.
%
% 






\bibitem{SU09}
{\sc W.~Sickel and T.~Ullrich}, Tensor products of {S}obolev-{B}esov spaces and applications to
  approximation from the hyperbolic cross, {\it J. Approx. Theory} {\bf 161} (2009), 748--786.


\bibitem{SU10}
{\sc W.~{S}ickel and T.~{U}llrich},
\newblock {S}pline interpolation on sparse grids,
\newblock {\it Applicable Analysis} {\bf 90} (2011), 337--383.


\bibitem{sc}
{\sc I.~Steinwarth and C.~Christmann}, Support vector machines.
Information Science and Statitics, Springer, New York, 2008.





\bibitem{St}
{\sc B. St\"ockert}, Ungleichungen vom Plancherel-Polya-Nikol'skij-Typ in gewichteten $L_p^\Omega$-R\"aumen mit gemischten Normen.
Math. Nachr.  {\bf 86}  (1978),  19-32. 




\bibitem{Str-67}
{\sc R. S. Strichartz}, Multiplier on fractional Sobolev spaces, J. Math. Mech.  {\bf 16}  (1967) 1031-1060. 

%
%\bibitem{ta1} 
%{\sc M. H. Taibleson}, On the theory of Lipschitz spaces
%of distributions on Euclidean $n$-space. I. Principal properties.
%{\it J. Math. Mech.} {\bf 13} (1964), 407-479.
%
%\bibitem{ta2} 
%{\sc M. H. Taibleson},  On the theory of Lipschitz spaces
%of distributions on Euclidean $n$-space. II. Translation invariant
%operators, duality, and interpolation. {\it J. Math. Mech.} {\bf 14} (1965),
%821-839.
%
%\bibitem{ta3} 
%{\sc M. H. Taibleson}, On the theory of Lipschitz spaces
%of distributions on Euclidean $n$-space. III. Smoothness
%and integrability of Fourier tansforms, smoothness of
%convolution kernels. {\it  J. Math. Mech.} {\bf 15} (1966), 973-981. 
% 
 

\bibitem{T93b}
{\sc V.N.~Temlyakov}, Approximation of periodic functions, Nova Science, New York, 1993.

 
 


\bibitem{Tr78} 
{\sc H. Triebel}, {\it Besov-Sobolev-Hardy spaces.} Teubner-Texte zur Math., Teubner, Leipzig, 1978.

 
 
%\bibitem{t78} 
%{\sc H. Triebel}, {\it Interpolation Theory, Function Spaces, Differential Operators.}
%North-Holland Publishing Co., Amsterdam-New York, 1978.


\bibitem{Tr83}
{\sc H.~Triebel}, {\it Theory of function spaces}. Birkh\"auser,
Basel, 1983.

%\bibitem{Tr92}
%{\sc H.~Triebel}, {\it Theory of function spaces II}. Birkh\"auser,
%Basel, 1992.

\bibitem{U1}
{\sc T.~Ullrich},
Function spaces with dominating mixed smoothness. Characterizations by differences.
{\it Jenaer Schriften zur Mathematik und Informatik} Math/Inf/05/06,
Jena, 2006.



\bibitem{Un1}
{\sc A.P.~Uninskij},
Embedding theorems for a class of functions with mixed norms. Dokl. Akad. Nauk SSSR {\bf 166} (1966), 806-808.



\bibitem{Un2}
{\sc A.P.~Uninskij},
Inequalities in a mixed norm for trigonometric polynomials  and entire functions of finite order. 
In ''Embedding theorems and applications'' 
Nauka, Moscow, 1970, 212-218.












\bibitem{Vybiral}
{\sc J.~Vybiral}, Function spaces with dominating mixed smoothness. {\it Dissertationes Math.} {\bf 436} (2006).


\bibitem{Ze}
{\sc E.~Zeidler}, Nonlinear functional analysis and its applications. Vol. II/A, Springer, New York, 1990.



\end{thebibliography}
\end{document}